\newtheorem{theorem}{Theorem}[section]
\newtheorem{lemma}[theorem]{Lemma}
\newtheorem{notation}[theorem]{Notation}
\newtheorem{remark}[theorem]{Remark}
\newtheorem{void}[theorem]{}
\newtheorem{proposition}[theorem]{Proposition}
\def\Ind{{\rm Ind}}
\def\Res{{\rm Res}}
\def\br{{\rm br}}
\def\LP{\mathcal{LP}}
\def\Aut{{\rm Aut}}
\def\Hom{{\rm Hom}}
\def\End{{\rm End}}
\def\Gal{{\rm Gal}}
\def\stab{{\rm stab}}
\def\O{\mathcal{O}}
\def\P{\mathcal{P}}
\def\rk{{\rm rk}}
\def\Tr{{\rm Tr}}
\def\t{\tilde}
\def\ps@pprintTitle{%
\let\@oddhead\@empty
\let\@evenhead\@empty
\def\@oddfoot{\reset@font\hfil\thepage\hfil}
\let\@evenfoot\@oddfoot
}
\begin{document}

\begin{frontmatter}

\title{On stable equivalences of Morita type with twisted diagonal vertices}

\author{Xin Huang}


\begin{abstract}
We give a new proof, by using simplified terminology and notation, to a result of Puig stating that if a bimodule of two block algebras of finite groups over an algebraically closed field induces a stable equivalence of Morita type and has a twisted diagonal vertex, then it has an endopermutation module as a source. We also extend this result to arbitrary fields under a mild assumption.
\end{abstract}

\begin{keyword}
finite groups \sep blocks \sep endopermutation modules \sep stable equivalences of Morita type
\end{keyword}

\end{frontmatter}


\section{Introduction}\label{s1}

Throughout this paper $p$ is a prime, and $\O$ is a complete discrete valuation ring with residue field $k$ of characteristic $p$. We allow the case $\O=k$.
Let $G$ and $H$ be finite groups.  An $\O G$-$\O H$-bimodule $M$ can be regarded as a left $\O (G\times H)$-module (and vice versa) via $(g,h)m=gmh^{-1}$, where $g\in G$, $h\in H$ and $m\in M$. If $M$ is indecomposable as an $\O G$-$\O H$-bimodule, then $M$ is indecomposable as an $\O(G\times H)$-module, hence has a vertex (in $G\times H$) and a source. If $\varphi:P\cong Q$ is an isomorphism between subgroups $P\leq G$ and $Q\leq H$, we set
$$\Delta\varphi:=\{(u,\varphi(u))\mid u\in P\},$$
and call it a {\it twisted diagonal subgroup} of $G\times H$; if $P=Q$ and $\varphi={\rm id}_P$, we denote $\Delta\varphi$ by $\Delta P$. We denote by $p_1:G\times H\to G$ and $p_2:G\times H\to H$ the canonical projections. It is easy to see that
a subgroup $X$ of $G\times H$ is twisted diagonal $\Longleftrightarrow$ $X\cong p_1(X)$ and $X\cong p_2(X)$. In this case we have $X=\{(u,p_2\circ p_1^{-1}(u))\mid u\in p_1(X)\}$, where we abusively use the same notation $p_1$ and $p_2$ to denote their restrictions to subgroups. In this paper we will frequently use the following context:

\begin{notation}\label{notation:stable equ of Morita type}
Let $G$ and $H$ be finite groups, $b$ a block of $\O G$ and $c$ a block of $\O H$. Assume that $M$ is an indecomposable
$(\O Gb, \O Hc)$-bimodule, which is finitely generated projective as a left and right module, inducing a stable equivalence of Morita type between $\O Gb$ and $\O Hc$. This means that $M\otimes_{\O Hc}M^*\cong \O Gb\oplus U_1$ for some projective $\O Gb\otimes_\O (\O Gb)^{\rm op}$-module $U_1$ and $M^*\otimes_{\O Gb} M\cong \O Hc\oplus U_2$ for some projective $\O Hc\otimes_\O (\O Hc)^{\rm op}$-module $U_2$, where $M^*:={\rm Hom}_\O(M,\O)$.
Let $X$ be a vertex of $M$ and $V$ an $\O X$-source of $M$.
\end{notation}

We give a new proof of the following result of Puig:

\begin{theorem}[{\cite[Corollary 7.4]{Puig1999}}]\label{theorem:vertices imply sources}
Keep the notation of \ref{notation:stable equ of Morita type}. Assume that $k$ is algebraically closed. Then the following are equivalent:
\begin{enumerate}[{\rm (i)}]
	\item $X$ is a twisted diagonal subgroup of $G\times H$.
	\item $X\cong p_1(X)$.
	\item $X\cong p_2(X)$.
	\item $V$ is an endopermutation $\O X$-module.
	\item $p$ does not divide the $\O$-rank of $V$.
\end{enumerate}
\end{theorem}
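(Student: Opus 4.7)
The plan is to prove the cycle $(\mathrm{i})\Rightarrow(\mathrm{ii})\Rightarrow(\mathrm{iii})\Rightarrow(\mathrm{i})$, with the implication $(\mathrm{i})\Rightarrow(\mathrm{ii})$ carrying the main content.

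For $(\mathrm{i})\Rightarrow(\mathrm{ii})$ I would exploit the decomposition $M\otimes_{\O Hc}M^*\cong\O Gb\oplus U_1$ with $U_1$ projective. Writing $X=\Delta\varphi$ for $\varphi:P\cong Q$ between $p$-subgroups $P\leq G$ and $Q\leq H$, I use $\varphi$ to identify $\O X$-modules with $\O P$-modules, so that $V$ becomes an indecomposable $\O P$-module. A Mackey decomposition of the outer tensor product $\Ind_X^{G\times H}V\otimes_{\O Hc}\Ind_X^{G\times H}V^*$ along $H$-double cosets yields, from the trivial double coset, the term $\Ind_{\Delta P}^{G\times G}(V\otimes_\O V^*)$ (with diagonal $P$-action), while the remaining double cosets contribute summands with vertices strictly smaller than $\Delta P$ up to $G\times G$-conjugacy. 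Since $\O G$ is a permutation $\O(G\times G)$-module via $(g_1,g_2)\cdot x=g_1xg_2^{-1}$, its summand $\O Gb$ is $p$-permutation, so every indecomposable non-projective summand of $\O Gb$ has a trivial source on its vertex. Feeding this back through the Brauer constructions $(M\otimes_{\O Hc}M^*)(\Delta Z)$ for $Z\leq P$, together with the self-duality of $V\otimes V^*$, forces $V\otimes V^*$ to be a permutation $\O P$-module, which is exactly the endopermutation condition on $V$.

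For $(\mathrm{ii})\Rightarrow(\mathrm{iii})$ I first observe that $V$ is capped: the trivial $\Delta D$-source of the $\O Gb$-summand, tracked through the Mackey decomposition above, manifests as a trivial summand of $V\otimes V^*$. A capped indecomposable endopermutation module over a $p$-group, in characteristic $p$ with $k$ algebraically closed, has $\O$-rank coprime to $p$ by a classical result from Dade's theory. For $(\mathrm{iii})\Rightarrow(\mathrm{i})$, set $P=p_1(X)$, $Q=p_2(X)$, $R=\ker(p_2|_X)\lhd P$ and $S=\ker(p_1|_X)\lhd Q$, so that $X$ is twisted diagonal iff $R=S=1$. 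Assuming $R\neq 1$, the elements $(u,1)\in X$ for $u\in R$ must act nontrivially on $V$ (otherwise $V$ would descend to a vertex smaller than $X$, contradicting the vertex property), and a Brauer-construction argument applied to $M$ at $\Delta R$-type subgroups of $G\times H$ then imposes $p\mid\rk_\O V$, contradicting $(\mathrm{iii})$. The case $S\neq 1$ is handled symmetrically.

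The step I expect to be hardest is the bookkeeping in $(\mathrm{i})\Rightarrow(\mathrm{ii})$ needed to upgrade the $p$-permutation structure inherited from $\O Gb$ to a genuine permutation structure on $V\otimes V^*$: this is where the twisted-diagonal hypothesis on $X$ is used decisively, and carefully matching the Brauer quotients of $M\otimes_{\O Hc}M^*$ against those of $\O Gb\oplus U_1$ at each $\Delta Z\leq\Delta P$ is the technical heart of the argument.
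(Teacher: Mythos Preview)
Your handling of the easy implications is roughly in line with the paper, which disposes of (ii)$\Rightarrow$(iii) and (iii)$\Rightarrow$(i) by citation. For (iii)$\Rightarrow$(i) the clean argument is simply that $M$ is projective as a left $\O G$-module, so if $R:=X\cap(G\times 1)\neq 1$ then $\Res^X_R(V)$ is free and $|R|\mid\rk_\O V$; your ``acts nontrivially / Brauer-construction'' sketch is in the right spirit but the justification via descent to a smaller vertex is wrong (the trivial module has full vertex). For (ii)$\Rightarrow$(iii) the Mackey detour is unnecessary: $V$ is a source, hence has vertex $X$, hence is already capped.

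The serious gap is in (i)$\Rightarrow$(ii), exactly at the step you flag as hardest. You know $M\otimes_{\O H}M^*\cong\O Gb\oplus U_1$ is $p$-permutation, so its restriction to $\Delta P$ is a permutation $\O P$-module; but to conclude that $V\otimes_\O V^*$ is permutation you would need it to be a direct summand of that restriction, and the Mackey bookkeeping does not give this. The splitting $\Res_X^{G\times H}(M)\cong V\oplus(\text{rest})$ is only an $\O X$-splitting, not a splitting of right $\O H$-modules, so it does not pass through $-\otimes_{\O H}M^*$. Working instead with $\Ind_X^{G\times H}(V)\otimes_{\O H}\Ind_{X^{-1}}^{H\times G}(V^*)$ does not help either: the Mackey decomposition over $Q\backslash H/Q$ produces a summand with vertex of order $|P|$ for every class in $N_H(Q)/Q$, not only the trivial one, and $M$ is in any case only a summand of the induced module. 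What the situation actually yields, after passing to source algebras, is not a direct-summand relation but merely an \emph{embedding} of interior $P$-algebras from a symmetric algebra with $P$-stable basis into an algebra of the form $\End_\O(V)\otimes_\O B$ with $B$ a source algebra of $c$ (this is Proposition~\ref{prop:relation of source algebras}). Deducing from such an embedding that $V$ is endopermutation is Puig's Theorem~7.2 (Theorem~\ref{theorem: Puig 7.2}), and its proof is genuinely hard: an induction on local points, a dual embedding $B\hookrightarrow\End_\O(V)^{\rm op}\otimes_\O A$, and a covering-homomorphism argument occupying all of Section~\ref{section:proof of Puig's Theorem 7.2}. The difficulty you sensed is not bookkeeping but the theorem itself.
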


The implication (iv)$\Rightarrow$(v) is a property of endopermutation modules; see e.g. \cite[Proposition 7.3.10 (i)]{Lin18b}. The implication (v)$\Rightarrow$(i) follows from \cite[Proposition 5.11.8]{Lin18a}. 

\begin{remark}
{\rm Note that in both these two implications, the field $k$ can be arbitrary: Although \cite[Proposition 7.3.10 (i)]{Lin18b} needs the field $k$ to be perfect, using the classification of endopermutation modules over $k$ (\cite[Theorem 9.5]{Bouc:The Dade group} or \cite[Theorem 13.3]{Th07}), we see that any indecomposable capped endopermutation module is defined over a very small finite subfield. Hence the implication (iv)$\Rightarrow$(v) does not require the field $k$ to be perfect.}
\end{remark}

As mentioned above, (i) implies (ii) and (iii). Now it suffices to give a new proof of (ii)$\Rightarrow$(iv) because a symmetric argument will imply that (iii)$\Rightarrow$(iv) as well; this will be given in Section \ref{section:Proof of Puig's Corollary 7.4}. There are two main steps in our new proof of Theorem \ref{theorem:vertices imply sources}. The first step is Proposition \ref{prop:relation of source algebras}, which is the main difference between the methods of our proof and Puig's original proof. The proof of Proposition \ref{prop:relation of source algebras} is inspired by Linckelmann's proof of \cite[Theorem 9.11.9]{Lin18b}. The second main step is Puig's \cite[Theorem 7.2]{Puig1999} (see Theorem \ref{theorem: Puig 7.2} below). For this step, we follow the idea of Puig's original proof - we modify Puig's proof by simplifying terminology and notation, by changing some arguments and by adding details. We hope our modification can serve to explain Puig's proof of \cite[Theorem 7.2]{Puig1999}. Besides a new proof, we establish the result for arbitrary fields, under a mild assumption:

\begin{theorem}\label{theorem:descent to finite fields}
Keep the notation of \ref{notation:stable equ of Morita type}. Assume that $\O=k$. Assume that $X\cong p_1(X)$ or $X\cong p_2(X)$. Assume further that there are no subgroups $Z \unlhd Y$ of $X$ satisfying $Y/Z\cong Q_8$ (the quaternion group of order $8$) or that $k$ contains a primitive $3$-rd root of unity. Then $V$ is an endopermutation $kX$-module.
\end{theorem}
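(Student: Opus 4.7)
The plan is to reduce to the algebraically closed case handled by Theorem~\ref{theorem:vertices imply sources} via scalar extension, and then descend to $k$. Let $\bar{k}$ be an algebraic closure of $k$ and set $\bar{M} := \bar{k} \otimes_k M$; by flatness, $\bar{M}$ remains a bimodule inducing a stable equivalence of Morita type between $\bar{k} G \bar{b}$ and $\bar{k} H \bar{c}$. Write $\bar{b} = \sum_i \bar{b}_i$ and $\bar{c} = \sum_j \bar{c}_j$ as sums of block idempotents over $\bar{k}$; then $\Gamma := \Gal(\bar{k}/k)$ permutes the $\bar{b}_i$ (and the $\bar{c}_j$) transitively. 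A standard Galois-descent argument, using that $M$ is indecomposable, shows that the indecomposable summands of $\bar{M}$ form a single $\Gamma$-orbit, all with vertex $X$ (which is $\Gamma$-fixed as a subgroup of $G \times H$), and that $\bar{k} \otimes_k V$ decomposes as the $\Gamma$-orbit of a single indecomposable source $V_1$: $\bar{k} \otimes_k V \cong \bigoplus_{[\sigma] \in \Gamma / \Gamma_{V_1}} \sigma V_1$.

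For each pair $(i,j)$ with $\bar{b}_i \bar{M} \bar{c}_j \neq 0$, the bimodule $\bar{b}_i \bar{M} \bar{c}_j$ induces a stable equivalence of Morita type between $\bar{k} G \bar{b}_i$ and $\bar{k} H \bar{c}_j$, a routine block-theoretic consequence of the behavior of Morita-type stable equivalences under scalar extension. Applying Theorem~\ref{theorem:vertices imply sources} over $\bar{k}$ to an indecomposable summand of $\bar{b}_i \bar{M} \bar{c}_j$ with vertex $X$ now shows that $V_1$ is an endopermutation $\bar{k}X$-module. At this point the hypothesis of the theorem enters: Bouc's results on fields of definition of endopermutation modules (of which the stated $Q_8$-subquotient / primitive cube root of unity condition is precisely the sharp hypothesis at $p = 2$) imply that $V_1$ is isomorphic to all of its $\Gamma$-conjugates, i.e., $\Gamma_{V_1} = \Gamma$, so that $\bar{k} \otimes_k V \cong V_1$ is itself indecomposable.

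Finally, since $X$ is a $p$-group, every indecomposable $kX$-module has local endomorphism ring with residue field $k$ and so remains indecomposable after scalar extension to $\bar{k}$; consequently a $kX$-module $W$ is a permutation module if and only if $\bar{k} \otimes_k W$ is one. Applying this with $W := V \otimes_k V^* \cong \End_k(V)$ and using that $\bar{k} \otimes_k W \cong V_1 \otimes_{\bar{k}} V_1^*$ is a permutation $\bar{k}X$-module by the endopermutation property of $V_1$, we conclude that $\End_k(V)$ is a permutation $kX$-module, i.e., $V$ is an endopermutation $kX$-module. The main obstacle is the invocation of Bouc's descent theorem in the second paragraph: the stated hypotheses are exactly those required to ensure that the $Q_8$-related 3-torsion classes in the Dade group $D_{\bar{k}}(X)$, which obstruct $\Gamma$-invariance when $p = 2$, are either absent or already represented by modules defined over $k$; without one of these assumptions, Galois-nontrivial endopermutation classes can appear as the source of $V_1$ and the descent to $k$ breaks down.
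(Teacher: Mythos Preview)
Your overall strategy matches the paper's: pass to the algebraic closure $\bar{k}$, apply Theorem~\ref{theorem:vertices imply sources} to an indecomposable summand of $\bar{k}\otimes_k M$ inducing a stable equivalence between blocks over $\bar{k}$, and then descend using the classification of endopermutation modules (the paper cites Th\'evenaz's survey \cite[Theorem 13.3]{Th07} rather than Bouc directly, but the input is the same). There is, however, a genuine error in your final paragraph.

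The assertion ``since $X$ is a $p$-group, every indecomposable $kX$-module has local endomorphism ring with residue field $k$'' is false. For instance, take $p=2$, $X=C_2\times C_2$, $k=\mathbb{F}_2$: the band module of dimension $4$ attached to the irreducible polynomial $x^2+x+1$ has endomorphism ring with residue field $\mathbb{F}_4$, and splits into two non-isomorphic Galois-conjugate summands over $\mathbb{F}_4$. What \emph{is} true, by Green's indecomposability theorem, is that the indecomposable \emph{permutation} modules $\Ind_Y^X(k)$ are absolutely indecomposable; combined with the Noether--Deuring theorem this still yields the descent of the permutation property you need, so your last step can be repaired, but not with the justification you give.

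The more serious consequence of the false claim is your intermediate conclusion $\bar{k}\otimes_k V \cong V_1$. Even granting $\Gamma_{V_1}=\Gamma$, the multiplicity-one Galois-orbit decomposition $\bar{k}\otimes_k V \cong \bigoplus_{[\sigma]}\sigma V_1$ that you assert as ``standard'' requires $\End_{kX}(V)/J$ to be a separable field extension of $k$, which you have not verified; in general one would only get $\bar{k}\otimes_k V \cong V_1^{\,n}$. The paper sidesteps all of this: rather than analysing how $\bar{k}\otimes_k V$ decomposes, it uses the classification to conclude that the $\bar{k}X$-source $V'$ is \emph{defined over $k$}, say $V'\cong \bar{k}\otimes_k Y$, and then invokes a separate descent-of-sources lemma (Lemma~\ref{lemma:descent of sources}, proved via a Krull--Schmidt argument on restrictions along $\bar{k}/k$) to conclude directly that $Y$ is a $kX$-source of $M$. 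This route never needs any statement about absolute indecomposability of arbitrary $kX$-modules.
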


This will be proved in Section \ref{section:Descending to finite fields}, making use of extended versions of descent results for vertices and sources used by Kessar and Linckelmann \cite{Kessar_Linckelmann}, and the classification of indecomposable endopermutation modules. 

In Section \ref{section:Preliminaries on $G$-algebras and Brauer homomorphisms} we review some background terminology and results related to $G$-algebras and Brauer homomorphisms, most of which are in preparation for the proof of Theorem \ref{theorem: Puig 7.2}. The reader who is not concerned with the proof of Theorem \ref{theorem: Puig 7.2} only needs to read \ref{notation:basis notatin for algebras}, \ref{void: Brauer homomorphisms} and \ref{void:points} (iii) and skip the rest of Section \ref{section:Preliminaries on $G$-algebras and Brauer homomorphisms}. 

\section{Preliminaries on $G$-algebras and Brauer homomorphisms}\label{section:Preliminaries on $G$-algebras and Brauer homomorphisms}

\begin{notation}\label{notation:basis notatin for algebras}
{\rm  For any $\O$-algebra $A$, we denote by $A^\times$ the group of invertible elements of $A$ and by $A^{\rm op}$ the opposite $\O$-algebra of $A$. All $\O$-algebras in this paper are assumed to be finitely generated as $\O$-modules; this implies that all $k$-algebras in this paper are finite-dimensional. Unless otherwise specified, all modules are left modules. A homomorphism $f:A\to B$ between $\O$-algebras is not required to be unitary. Following Puig \cite{Puig pointed group}, we say that $f$ is an {\it embedding} if $\ker(f)=0$ and ${\rm Im}(f)=f(1_A)Bf(1_A)$. Following Puig \cite{Puig1988}, we say that $f$ is a {\it covering homomorphism} if $f(A)+J(B)=B$. Let $G$ and $H$ be finite groups. A {\it $G$-algebra over $\O$} is an $\O$-algebra $A$ endowed with an action of $G$ by $\O$-algebra automorphisms, denoted $a\mapsto {}^ga$, where $a\in A$ and $g\in G$. An {\it interior $G$-algebra over $\O$} is an $\O$-algebra $A$ with a group homomorphsim $G\to A^\times$, called the structure homomorphism. For any $g\in G$ we abusively use the same letter $g$ to denote the image of $g$ in $A^\times$. Let $A$ be an interior $G$-algebra over $\O$, $B$ an interior $H$-algebra over $\O$, and $V$ a left (resp. right) $\O H$-module. Then $\End_\O(V)$ is an interior $H$-algebra.  Let $f:G\to H$ be a group isomorphism. Then we denote by ${}_f V$ (resp. $V_f$) the left (resp. right) $\O G$-module, which is equal to $V$ as an $\O$-module, endowed with structure homomorphism $G\xrightarrow{f} H\to \End_\O(V)^\times$, and by ${}_f B$ the interior $G$-algebra with structure homomorphism $G\xrightarrow{f}H\to B^\times$. If $G=H$, we consider $A\otimes_\O B$ as an interior $G$-algebra over $\O$ with the structure homomorphism $G\cong \Delta G\hookrightarrow G\times G\to A^\times\times B^\times\to(A\otimes_\O B)^\times$. For a $k$-algebra $A$, we say that $A$ is {\it split}, if for any simple $A$-module $V$, we have $\End_A(V)\cong k$; we say that $A$ is {\it semisimple} if the Jacobson radical $J(A)=0$.
}	
\end{notation}

\begin{lemma}\label{lemma: Z(A) has stable basis}
Let $G$ be a finite group and $A$ a $G$-algebra over $k$. Assume that $A$ is semisimple and split. Then $Z(A)$ has a $G$-stable $k$-basis consisting of all primitive idempotents in $Z(A)$.
\end{lemma}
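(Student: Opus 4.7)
My plan is to invoke the Wedderburn-Artin theorem together with the observation that $k$-algebra automorphisms preserve central primitive idempotents. Since $A$ is semisimple and split over $k$, Wedderburn-Artin gives a $k$-algebra isomorphism $A\cong\prod_{i=1}^n M_{n_i}(k)$. Writing $e_i$ for the identity element of the $i$-th factor (viewed as an element of $A$), the set $E=\{e_1,\ldots,e_n\}$ is a $k$-basis of $Z(A)$, and its elements are exactly the primitive idempotents of $Z(A)$ (equivalently, the central primitive idempotents of $A$).

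Next I would note that the $G$-action on $A$ is by $k$-algebra automorphisms, hence stabilises $Z(A)$. The property of being a primitive idempotent of $Z(A)$ is defined purely in terms of the $k$-algebra structure (namely $e^2=e$, $e\neq 0$, and $e$ cannot be written as a sum of two orthogonal nonzero idempotents in $Z(A)$), so it is preserved by any $k$-algebra automorphism of $A$ restricted to $Z(A)$. Therefore, for every $g\in G$, the map $a\mapsto {}^g a$ permutes the set $E$, which gives the desired $G$-stable basis.

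There is no real technical obstacle: this is a routine consequence of the Wedderburn decomposition. The only point worth flagging is that the hypothesis that $A$ is split is essential. Without it, each Wedderburn factor could be a matrix algebra over a proper skew-field extension of $k$; although the central primitive idempotents would still be $G$-permuted, they would span only a proper subalgebra of $Z(A)$ (a product of field extensions of $k$) rather than all of it, so they would no longer form a $k$-basis. The semisimplicity hypothesis is of course what makes the Wedderburn decomposition available in the first place.
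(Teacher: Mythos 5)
Your argument is correct and follows the same route as the paper: both invoke the Wedderburn decomposition $A\cong\prod_i M_{n_i}(k)$, identify the identity elements of the factors as exactly the primitive idempotents of $Z(A)$, and observe that $G$ permutes them because the $G$-action is by $k$-algebra automorphisms. Your closing remark about why the split hypothesis is needed (the centre of a non-split factor would be a proper field extension of $k$) is a correct and worthwhile observation, though not required by the statement.
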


\begin{proof}
Since $G$ acts as $k$-algebra automorphisms on $A$, $G$ permutes idempotents in $Z(A)$. By the Wedderburn theorem (see e.g. \cite[Theorem 1.14.6]{Lin18a}), $A$ can be decomposed as $A= A_1\oplus \cdots \oplus A_n$, where $A_1,\cdots,A_n$ are isomorphic to matrix algebras over $k$. Let $e_1,\cdots,e_n$ be the identity elements of $A_1,\cdots,A_n$ respectively; they are exactly all primitive idempotents of $Z(A)$. Then $Z(A)= Z(A_1)\oplus\cdots\oplus Z(A_n)=k\cdot e_1\oplus\cdots\oplus k\cdot e_n$, whence the statement.
\end{proof}

\begin{lemma}\label{lemma:existence of h such that f=gh}
Let $G$ be a finite group, and let $A$ and $B$ be $\O$-algebras. Let $L$, $M$, $N$ and $U$ be (interior) $G$-algebras over $\O$ (resp. $A$-modules, or $A$-$B$-bimodules) with $U\subseteq N$. Let $f:M\to L$ and $g:N\to L$ be injective (interior) $G$-algebra (resp. $A$-modules, or $A$-$B$-bimodules) homomorphisms. Assume that ${\rm Im}(f)=g(U)$. There exists an injective (interior) $G$-algebra (resp. $A$-modules, or $A$-$B$-bimodules) homomorphism $h:M\to N$ such that $f=g\circ h$ and ${\rm Im}(h)=U$.
$$\xymatrix{ M\ar[rr]^{f}\ar@{-->}[d]_h \ar@{-->}[rrd]_h  & & L\\
U \ar@{^{(}->}[rr] &  &	N \ar[u]_g 
}$$
\end{lemma}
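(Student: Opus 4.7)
The plan is to construct $h$ as the composition of $f$ with the inverse of the restriction of $g$ to $U$. Concretely, since $g$ is injective, its restriction $g|_U : U \to g(U)$ is bijective, and the hypothesis $\mathrm{Im}(f) = g(U)$ guarantees that for every $m \in M$ the element $f(m)$ lies in $g(U)$. I would therefore define
$$h(m) := (g|_U)^{-1}(f(m)), \qquad m \in M,$$
so that by construction $h$ takes values in $U \subseteq N$ and satisfies $g \circ h = f$.

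Next I would verify the three properties claimed. Injectivity of $h$ is immediate: $h(m) = 0$ forces $f(m) = g(h(m)) = 0$, and $f$ is injective. For the equality $\mathrm{Im}(h) = U$, the inclusion $\mathrm{Im}(h) \subseteq U$ is built into the definition, and conversely, given $u \in U$, we have $g(u) \in g(U) = \mathrm{Im}(f)$, so $g(u) = f(m)$ for some $m \in M$; then $g(h(m)) = f(m) = g(u)$ and injectivity of $g$ gives $h(m) = u$.

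It remains to check that $h$ respects whichever structure is relevant. In each of the listed cases the argument is the same and uses injectivity of $g$ in a uniform way: for any operation $\ast$ (addition, scalar multiplication, multiplication, left $A$-action, right $B$-action, $G$-action, or left-multiplication by elements of $G$ via the structure homomorphism in the interior $G$-algebra case) one computes
$$g\bigl(h(m_1 \ast m_2)\bigr) = f(m_1 \ast m_2) = f(m_1) \ast f(m_2) = g(h(m_1)) \ast g(h(m_2)) = g\bigl(h(m_1) \ast h(m_2)\bigr),$$
since $f$ and $g$ preserve $\ast$; injectivity of $g$ then yields $h(m_1 \ast m_2) = h(m_1) \ast h(m_2)$. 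The only subtle point is that this computation takes place inside $N$, so one must know that $U$ is closed under the relevant operations; but this is automatic since $U$ is given as a subobject of $N$ of the appropriate type (subalgebra, submodule, or sub-bimodule) and $h$ already lands in $U$.

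I do not anticipate a genuine obstacle here: the lemma is essentially the statement that an injective map factors through any subobject whose image under a second injection contains the image of the first. The only care needed is in spelling out, once, that the argument is independent of whether $M,N,L,U$ carry an algebra, module, bimodule, or (interior) $G$-algebra structure, since the verification is formally the same in each case.
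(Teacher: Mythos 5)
Your construction $h = (g|_U)^{-1}\circ f$ is exactly the composition used in the paper's proof, and the verifications of injectivity, $\mathrm{Im}(h)=U$, and $g\circ h = f$ proceed identically. The extra paragraph spelling out why $h$ preserves the relevant algebraic structure is a detail the paper leaves implicit, but it is the same argument.
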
	
\begin{proof}
Since $g$ is injective, the homomorphism $U\xrightarrow{g} g(U)$ is an isomorphism; we denote by $g^{-1}:g(U)\to U$ its inverse. Let $h:M\to N$ be the composition of the maps
$$M\xrightarrow{f} {\rm Im}(f)=g(U)\xrightarrow{g^{-1}} U\hookrightarrow N.$$
Then it is clear that $h$ is injective, ${\rm Im}(h)=U$ and we have $f=g\circ h$.
\end{proof}

\begin{proposition}\label{prop: matrix algebras unique embedding}
Let $A=M_m(\O)$ and $B=M_n(\O)$ be matrix algebras over $\O$ with $m\leq n$. Then there exists an embedding $f:A\to B$ of $\O$-algebras. If $g:A\to B$ is another embedding of $\O$-algebras, then there is an element $b\in B^\times$ such that $f=c_b\circ g$, where $c_b$ is the inner automorphism of $B$ induced by $b$-conjugation. 
\end{proposition}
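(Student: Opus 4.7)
For existence, I would exhibit the standard block embedding $f : A \to B$ sending a matrix $a \in M_m(\O)$ to the matrix in $M_n(\O)$ having $a$ in the upper-left $m \times m$ corner and zeros elsewhere. Writing $e := f(1_A)$ for the diagonal idempotent with ones in the first $m$ positions, one checks directly that $\ker f = 0$ and $\mathrm{Im}(f) = eBe$, so $f$ is an embedding in the sense of \ref{notation:basis notatin for algebras}.

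For the uniqueness statement, the plan is first to conjugate $g$ by a unit of $B$ so that $g(1_A) = f(1_A)$, and then to show that the two resulting isomorphisms $A \xrightarrow{\sim} eBe$ differ by an inner automorphism of $eBe$ that can be lifted to an inner automorphism of $B$.

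\emph{Matching the idempotents.} Put $e := f(1_A)$ and $e' := g(1_A)$, so that $eBe \cong A \cong e'Be'$ both have $\O$-rank $m^2$. Since $\O$ is local, $B = M_n(\O)$ admits, up to isomorphism, a unique indecomposable projective module $P$ with $\End_B(P) \cong \O$; combined with the identification $eBe \cong \End_B(Be)^{\rm op}$, this forces $Be \cong P^{\oplus m} \cong Be'$ as left $B$-modules, and consequently $B(1-e) \cong B(1-e')$ as well. A standard construction, using mutually inverse pairs in $eBe'$ and $e'Be$ realizing the first isomorphism together with analogues in $(1-e)B(1-e')$ and $(1-e')B(1-e)$ for the second, produces a unit $b_1 \in B^\times$ with $b_1 e' b_1^{-1} = e$. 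Replacing $g$ by $c_{b_1} \circ g$, we may therefore assume $e = e'$.

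\emph{Inner automorphism and lifting to $B^\times$.} Now $\sigma := g \circ f^{-1}$ is an $\O$-algebra automorphism of $eBe \cong M_m(\O)$. The unique indecomposable projective $eBe$-module $V$ and its $\sigma$-twist $V_\sigma$ are both indecomposable projective of the same $\O$-rank, hence isomorphic; any isomorphism furnishes an element $u \in (eBe)^\times$ with $\sigma(x) = u x u^{-1}$ for all $x \in eBe$. Since $u \in eBe$ gives $u(1-e) = (1-e)u = 0$, the element $b := u^{-1} + (1-e)$ lies in $B^\times$ with inverse $u + (1-e)$. For $a \in A$ we have $g(a) \in eBe$, so the cross terms in the expansion of $b\, g(a)\, b^{-1}$ involving $(1-e)$ vanish, leaving $b\, g(a)\, b^{-1} = u^{-1} g(a) u = \sigma^{-1}(g(a)) = f(a)$, so $f = c_b \circ g$. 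The main obstacle lies in the two matrix-algebra inputs over the local ring $\O$: the conjugacy of idempotents of equal rank in $M_n(\O)$, and the Skolem--Noether-type assertion that every $\O$-algebra automorphism of $M_m(\O)$ is inner. Both rest on $\O$ being local, so that finitely generated projective $\O$-modules are free and determined by their rank.
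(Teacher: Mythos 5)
Your proof is correct, and its broad skeleton matches the paper's: conjugate $g(1_A)$ to $f(1_A)$ by a unit of $B$, recognize the resulting discrepancy as an automorphism of the corner algebra $eBe\cong M_m(\O)$ which is inner by a Skolem--Noether argument, and lift the conjugating unit from $(eBe)^\times$ to $B^\times$ by adding $1-e$ (the paper's $a'=g(a)+1_B-g(1_A)$ plays exactly the role of your $b=u^{-1}+(1-e)$). The genuine divergence is in how the two idempotents are made conjugate. The paper passes to the residue field, observes that $\overline{f(1_A)}$ and $\overline{g(1_A)}$ are idempotent matrices of equal rank in $M_n(k)$ and hence conjugate there, and then invokes the lifting theorem for idempotents to promote this to conjugacy in $B^\times$. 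You argue module-theoretically instead: identify $Be\cong P^{\oplus m}\cong Be'$ by comparing endomorphism algebras, deduce $B(1-e)\cong B(1-e')$ by Krull--Schmidt, and assemble an explicit unit $b_1 = x + x'$ from a mutually inverse pair $x\in eBe'$, $y\in e'Be$ together with the complementary pair in $(1-e)B(1-e')$ and $(1-e')B(1-e)$. Both are valid; the paper's version is shorter because it cites two standard results, while yours is more self-contained and makes the Morita-theoretic content explicit. One further cosmetic difference: the paper routes through an automorphism of $A$ itself (via the lemma producing $h$ with $f=g\circ h$) and applies Skolem--Noether to $A$, whereas you apply it directly to $eBe$ via $\sigma = g\circ f^{-1}$; the two are equivalent since $f$ identifies $A$ with $eBe$.
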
	
	
\begin{proof} In this proof, for any $\O$-module $V$ and any $v\in V$, we denote by $\bar{v}$ the image of $v$ in $\bar{V}:=V/J(\O) V$; if $V$ is $\O$-free, we denote by $\rk_\O(V)$ its $\O$-rank. Note that any $\O$-submodule of a finitely generated free $\O$-module is $\O$-free; see e.g. \cite[Proposition 1.5]{Thevenaz}.

Let $f:A\to B$ be the map sending any matrix $M\in A$ to the block diagonal matrix  $\left({\begin{array}{*{20}{c}}M& \\
	& 0 \end{array}}\right)\in B$. One easily checks that $f$ is an embedding of $\O$-algebras, proving the first statement. If $g:A\to B$ is another embedding of $\O$-algebras, we have 
	$$\rk_\O(f(1_A)Bf(1_A))=\rk_\O({\rm Im}(f))=\rk_\O(A)=\rk_\O({\rm Im}(g))=\rk_\O(g(1_A)Bg(1_A)).$$
	By elementary linear algebra, for any idempotent $e\in B=M_n(\O)$, 
	$$\rk_\O(eBe)=\dim_k(\bar{e}\bar{B}\bar{e})={\rm rank}(\bar{e}),$$ where ${\rm rank}(\bar{e})$ is the rank of the matrix $\bar{e}$. Hence we deduce that ${\rm rank}(\overline{f(1_A)})={\rm rank}(\overline{g(1_A)})$.  It follows, again by elementary linear algebra, that $\overline{f(1_A)}$ and $\overline{g(1_A)}$ are conjugate in $\bar{B}$. By lifting theorem of idempotents (see e.g. \cite[Theorem 3.1 (c)]{Thevenaz}), there exists $b'\in B^\times$ such that $f(1_A)=b'g(1_A)b'^{-1}$. Now we have
	$$ {\rm Im}(f)=f(1_A)Bf(1_A)=b'g(1_A)Bg(1_A)b'^{-1}=b'{\rm Im}(g)b'^{-1}={\rm Im}(c_{b'}\circ g). $$
Now by Lemma \ref{lemma:existence of h such that f=gh}, there exists an automorphism $h$ of $A$ such that $f=c_{b'}\circ g\circ h$. By the Skolem--Noether theorem (see e.g. \cite[Theorem 2.8.12]{Lin18a}), $h=c_a$ for some $a\in A^\times$. Let $a'=g(a)+1_B-g(1_A)$. Then $a'\in B^\times$ with inverse $a'^{-1}=g(a^{-1})+1_B-g(1_A)$. One easily checks that $g\circ h=g\circ c_a=c_{a'}\circ g$. So writing $b=b'a'\in B^\times$, then $f=c_{b'}\circ c_{a'}\circ h=c_b\circ g$.
\end{proof}

\begin{void}\label{void: Brauer homomorphisms}
	{\rm \textbf{Brauer homomorphisms.}	Let $G$ be a finite group, and let $A$ and $B$ be $G$-algebras over $\O$ (resp. $\O G$-modules). For any subgroup $P$ of $G$, we denote by $A^P$ the $N_G(P)$-subalgebra (resp. $kN_G(P)$-submodule) of $P$-fixed points of $A$.  For any two $p$-subgroups $Q\leq P$ of $G$, the {\it relative trace map} ${\rm Tr}_Q^P:A^Q\to A^P$ is defined by ${\rm Tr}_Q^P(a)=\sum_{x\in [P/Q]}{}^xa$, where $[P/Q]$ denotes a set of representatives of the left cosets of $Q$ in $P$. We denote by $A(P)$ the {\it $P$-Brauer quotient} of $A$, i.e., the $N_G(P)$-algebra (resp. $kN_G(P)$-module)
		\[A^P/(\sum_{Q<P}{\rm Tr}_Q^P(A^Q)+J(\O)A^P).\]
		We denote by ${\rm br}_P^A:A^P\to A(P)$ the canonical map, which is called the {\it $P$-Brauer homomorphism}. Sometimes we write $\br_P$ instead of $\br_P^A$ if no confusion arises. The following properties of Brauer homomorphisms are well-known and easy to check:
		
	(i)	If $f:A\to B$ is a homomorphism of $G$-algebras (resp. $\O G$-modules), then $f$ restricts to a homomorphism of $N_G(P)$-algebras (resp. $\O N_G(P)$-modules) $f^P:A^P\to B^P$, which in turn induces a homomorphism of $N_G(P)$-algebras (resp. $kN_G(P)$-modules) 
	\begin{equation}\label{equation: f(P)}
	f(P): A(P)\to B(P)
	\end{equation}
	 sending $\br_P^A(a)$ to $\br_P^B(f(a))$ for any $a\in A^P$. In other words, we have $f(P)\circ \br_P^A=\br_P^B\circ f^P$. In this way, the $P$-Brauer construction defines a functor (called the $P$-Brauer functor) from the category of $G$-algebras (resp. $\O G$-modules) to the category of $N_G(P)$-algebras (resp. $kN_G(P)$-modules). 
		
(ii) If $Q$ is a $p$-subgroup of $N_G(P)$ containing $P$, then the $P$-Brauer homomorphism $\br_P^A:A^P\to A(P)$ restricts to a homomorphism $(\br_P^A)^Q: A^Q\to A(P)^Q$ of $N_G(P,Q)$-algebras (resp. $kN_G(P,Q)$-modules) (where $N_G(P,Q):=N_G(P)\cap N_G(Q)$), which in turn induces a homomorphism of $N_G(P,Q)$-algebras (resp. $kN_G(P,Q)$-modules)
		\begin{equation}\label{equation: Brauer const. transitivity}
		\alpha_A(P,Q):A(Q)\to A(P)(Q)
		\end{equation}
		sending $\br_Q^A(a)$ to $\br_Q^{A(P)}(\br_P^A(a))$ for any $a\in A^Q$. Note that $\alpha_A(P,Q)$ is exactly $(\br_P^A)(Q)$ in the sense of (i).

(iii) The inclusion map $A^P\otimes_\O B^P\to (A\otimes_\O B)^P$ induces a homomorphism of $N_G(P)$-algebras (resp. $kN_G(P)$-modules)
		\begin{equation}\label{equation: Brauer map tensor products}
		\alpha_{A,B}(P):A(P)\otimes_k B(P)\to (A\otimes_\O B)(P)
		\end{equation}
		sending $\br_P(a)\otimes \br_P(b)$ to $\br_P(a\otimes b)$ for any $a\in A^P$ and $b\in B^P$. In the algebra case, it is clear that both $\alpha_A(P,Q)$ and $\alpha_{A,B}(P)$ are unitary homomorphisms.		 
	}
\end{void}

\begin{void}\label{void:covering homomorphisms}
{\rm \textbf{Covering homomorphisms of $G$-algebras.} Let $G$ be a finite group and $f:A\to B$ a homomorphism of $G$-algebras over $\O$. Following Puig \cite{Puig pointed group}, we say that $f$ is an {\it embeding of $G$-algebras} if the underlying $\O$-algebra homomorphism is an embedding. Following Puig \cite{Puig1988}, we say the $f$ is a {\it covering homomorphism of $G$-algebras} if for every subgroup $H$ of $G$, the $\O$-algebra homomorphism $f^H:A^H\to B^H$ is a covering homomorphsim (which means that $f^H(A^H)+J(B^H)=B^H$). According to \cite[Theorem 4.22]{Puig1988} (see \cite[Theorem 25.9]{Thevenaz}), $f$ is a covering homomorphism of $G$-algebras if and only if for every $p$-subgroup of $G$, $f(P):A(P)\to B(P)$ is a covering homomorphism of $k$-algebras. 
}
\end{void}

\begin{lemma}\label{lemma:embbeding}
Let $G$ be a finite group and let $A$ and $B$ be $G$-algebras over $\O$. Let $f:A\to B$ be an embedding of $G$-algebras. For any $p$-subgroup $P$ of $G$, the following hold:
\begin{enumerate}[{\rm (i)}]
	\item  The restriction $f^P:A^P\to B^P$ of $f$ is an embedding.
	\item  The map $f(P):A(P)\to B(P)$ in (\ref{equation: f(P)}) is an embedding.
	\item  If $i\in A^P$ is a primitive (local) idempotent, then $f(i)$ is a primitive (local) idempotent.
\end{enumerate}

\end{lemma}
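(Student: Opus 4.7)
The plan is to handle the three parts in the order (i), (iii), (ii). Parts (i) and (iii) are short: the first lifts the embedding property to $P$-fixed points, and the second is an immediate consequence of the algebra isomorphism that (i) provides. The substance lies in (ii), where one must show the Brauer map descends through the relative-trace ideal and the radical.

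For (i), I would first note that $f^P$ is injective as the restriction of an injective map. To identify its image with $f(1_A)B^Pf(1_A)$, the inclusion $\mathrm{Im}(f^P)\subseteq f(1_A)B^Pf(1_A)$ is automatic from $f(a)=f(1_A)f(a)f(1_A)$. For the reverse inclusion, given $b\in B^P$, the element $f(1_A)bf(1_A)$ lies in $\mathrm{Im}(f)$, say $f(1_A)bf(1_A)=f(a)$; since the left-hand side is $P$-fixed, $f({}^xa-a)=0$ for all $x\in P$, and injectivity of $f$ forces $a\in A^P$. Part (iii) is then a corner computation: (i) yields an $\O$-algebra isomorphism $f^P:A^P\xrightarrow{\sim}f(1_A)B^Pf(1_A)$, and since $f(i)B^Pf(i)\subseteq f(1_A)B^Pf(1_A)$, this restricts to an algebra isomorphism $iA^Pi\xrightarrow{\sim}f(i)B^Pf(i)$. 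Primitivity and locality of $i$ in $A^P$ are properties of the algebra $iA^Pi$ alone, so they transfer to $f(i)$ in $B^P$.

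The real work is (ii). For injectivity of $f(P)$, I would take $a\in A^P$ with $\br_P^B(f(a))=0$ and write $f(a)=\sum_{Q<P}\Tr_Q^P(b_Q)+c$ with $b_Q\in B^Q$ and $c\in J(\O)B^P$. Sandwiching with $f(1_A)$ leaves the left side unchanged (because $f(a)\in\mathrm{Im}(f)$) and, using that $f(1_A)$ is $P$-fixed, pulls inside the relative traces, giving $f(a)=\sum_{Q<P}\Tr_Q^P(f(1_A)b_Qf(1_A))+f(1_A)cf(1_A)$. Each term $f(1_A)b_Qf(1_A)$ lies in $\mathrm{Im}(f)\cap B^Q$, which by (i) applied to $Q$ equals $\mathrm{Im}(f^Q)$; pick a preimage $a_Q\in A^Q$. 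For the last term, $J(\O)=\pi\O$ with $\pi$ a non-zero-divisor gives $J(\O)B\cap\mathrm{Im}(f)=f(J(\O)A)$ and $J(\O)A\cap A^P=J(\O)A^P$, producing a preimage $a'\in J(\O)A^P$. Injectivity of $f$ then yields $a=\sum_{Q<P}\Tr_Q^P(a_Q)+a'$, whence $\br_P^A(a)=0$. The image identity is a direct computation of the same flavor: any $b\in B^P$ satisfies $f(1_A)bf(1_A)=f(a)$ for some $a\in A^P$ by (i), so applying $\br_P^B$ gives $f(P)(1_{A(P)})\br_P^B(b)f(P)(1_{A(P)})=f(P)(\br_P^A(a))\in\mathrm{Im}(f(P))$.

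The main obstacle I expect is the injectivity step in (ii): a relative-trace decomposition of $f(a)$ inside $B^P$ does not a priori come from $A$, and one must correct every piece so that it both lies in $\mathrm{Im}(f)$ and remains at the correct subgroup level. The $f(1_A)$-sandwich trick, which absorbs arbitrary elements of $B$ into $\mathrm{Im}(f)=f(1_A)Bf(1_A)$, is the key device that makes this work; the remaining manipulations are bookkeeping over the subgroup lattice and the $\pi$-adic filtration.
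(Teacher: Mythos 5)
Parts (i) and (ii), and the primitivity claim in (iii), are correct and essentially the same as the paper's proof; in particular the $f(1_A)$-sandwich in (ii) is exactly the paper's device (though your treatment of the $J(\O)$-term is slightly more elaborate than needed, since one can directly write $J(\O)f(1_A)B^Pf(1_A)=J(\O)f(A^P)=f(J(\O)A^P)$).

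The gap is in the locality claim of (iii). You assert that locality of $i$ in $A^P$, i.e.\ the condition $\br_P^A(i)\neq 0$, is ``a property of the algebra $iA^Pi$ alone.'' This is false: that condition involves the relative trace ideal $\sum_{Q<P}\Tr_Q^P(A^Q)$, so it depends on the full $P$-algebra structure of $A$, not only on the abstract $\O$-algebra $iA^Pi=(iAi)^P$. For a concrete counterexample take $\O=k$, $P$ nontrivial, and the $P$-algebra $A=\mathrm{Map}(P,k)\times k$ with $P$ acting by left translation on the first factor and trivially on the second; then $A^P=k\times k$, both primitive idempotents $i=(1,0)$ and $j=(0,1)$ give $iA^Pi\cong k\cong jA^Pj$ as $k$-algebras, yet $i$ is not local (it is $\Tr_1^P$ of an indicator function) while $j$ is. Your chosen order (i), (iii), (ii) is precisely what creates the problem: the locality claim is an immediate corollary of the injectivity established in (ii), since $f(P)$ injective gives $\br_P^B(f(i))=f(P)(\br_P^A(i))\neq 0$. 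Either prove (ii) before (iii), or argue as in (ii) but sandwiching with $f(i)$ in place of $f(1_A)$ and using (i) at every $Q\leq P$, or do as the paper does and cite Th\'evenaz, Propositions 8.5 and 15.1(d).
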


\begin{proof}
Since $f$ is an embedding, we have $\ker(f)=0$ and ${\rm Im}(f)=f(1_A)Bf(1_A)$. Hence $\ker(f^P)=0$. Clearly we have ${\rm Im}(f^P)=f(A^P)\subseteq f(1_A)B^Pf(1_A)$. Assume that $b\in f(1_A)B^Pf(1_A)$. Then $b\in {\rm Im}(f)$, hence $b=f(a)$ for some $a\in A$. For any $u\in P$, we have $f({}^ua)={}^u(f(a))={}^ub=b$. Since $f$ is injective, this implies ${}^ua=a$, i.e., $a\in A^P$. So $f(A^P)= f(1_A)B^Pf(1_A)$, proving (i).

According to the diagram $$\xymatrix{A^P \ar[rr]^{f^P}\ar[d]_{\br_P^A} &  &   B^P\ar[d]^{\br_P^B} \\
A(P)\ar[rr]^{f(P)}  &  & B(P) }$$
we have 
$${\rm Im}(f(P))=\br_P^B(f^P(A^P))=\br_P^B(f(1_A)B^Pf(1_A))=\br_P^B(f(1_A))\br_P^B(B^P)\br_P^B(f(1_A))$$ $$=f(P)(1_{A(P)})B(P)f(P)(1_{A(P)}),$$
where the second equality holds by (i). In order to show that $\ker(f(P))=0$, it suffices to show that for any $a\in A^P$ with  $f(P)(\br_P^A(a))=0$, we have $a\in \sum_{Q<P}{\rm Tr}_Q^P(A^Q)+J(\O)A^P$. Since $f(P)(\br_P^A(a))=\br_P^B(f(a))=0$, we have $f(a)\in \sum_{Q<P}{\rm Tr}_Q^P(B^Q)+J(\O)B^P$. Since $f(a)=f(1_A)f(a)f(1_A),$ we have
\begin{align*}
\begin{split}
	f(a) &\in \sum_{Q<P}{\rm Tr}_Q^P(f(1_A)B^Qf(1_A))+J(\O)f(1_A)B^Pf(1_A)\\
	&=\sum_{Q<P}{\rm Tr}_Q^P(f(A^Q))+J(\O)f(A^P)=f(\sum_{Q<P}{\rm Tr}_Q^P(A^Q)+J(\O)A^P),
\end{split}
\end{align*}
where the first equality holds by (i). Since $f$ is injective, this imlies that $a\in \sum_{Q<P}{\rm Tr}_Q^P(A^Q)+J(\O)A^P$, completing the proof of (ii). Statement (iii) follows from \cite[Propositions 8.5 and 15.1 (d)]{Thevenaz}.
\end{proof}	

\begin{lemma}[cf. e.g. {\cite[Proposition 5.8.1]{Lin18a}}]\label{lemma:basis}
Let $G$ be a finite group and $P$ a $p$-subgroup of $G$. Let $A$ be a $G$-algebra over $\O$ (resp. an $\O G$-module). If $A$ has a $P$-stable $\O$-basis $X$, then $\{\br_P^A(a)\mid a\in X^P\}$ is a $k$-basis of $A(P)$, where $X^P:=\{x\in X\mid {}^ux=x,~\forall~u\in P\}$.
\end{lemma}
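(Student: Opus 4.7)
The plan is to analyse everything through the $P$-orbit decomposition of $X$. Since $X$ is $P$-stable, the group $P$ acts on $X$ by permutations, partitioning $X$ into $P$-orbits. For each orbit $\mathcal{O}$ pick a representative $y_\mathcal{O}$ with stabiliser $P_\mathcal{O}\leq P$, and set $\sigma_\mathcal{O}:=\sum_{y\in\mathcal{O}} y={\rm Tr}_{P_\mathcal{O}}^P(y_\mathcal{O})$. First I would establish that $\{\sigma_\mathcal{O}\}_\mathcal{O}$ is an $\O$-basis of $A^P$: these elements are manifestly $P$-fixed and $\O$-linearly independent (since $X$ is), and if $a=\sum_{x\in X}\mu_x x$ lies in $A^P$, then comparing coefficients in ${}^u a=a$ for all $u\in P$ forces $\mu_x$ to be constant along each $P$-orbit, so $a=\sum_\mathcal{O}\mu_{\mathcal{O}}\sigma_\mathcal{O}$.

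Next I would pin down the submodule $N:=\sum_{Q<P}{\rm Tr}_Q^P(A^Q)+J(\O)A^P$ of $A^P$ in this basis. For each orbit $\mathcal{O}$ with $|\mathcal{O}|>1$ we have $P_\mathcal{O}<P$ and $y_\mathcal{O}\in A^{P_\mathcal{O}}$, so $\sigma_\mathcal{O}={\rm Tr}_{P_\mathcal{O}}^P(y_\mathcal{O})\in N$. The crucial direction is to control the contribution of the traces to the fixed-point coordinates: for $Q<P$ and $a=\sum_z\nu_z z\in A^Q$, expanding
\[{\rm Tr}_Q^P(a)=\sum_{x\in[P/Q]}\sum_{z\in X}\nu_z\cdot{}^x z\]
and collecting the coefficient of a fixed point $y\in X^P$, one sees that the only contributing pairs $(x,z)$ are those with $z={}^{x^{-1}}y=y$, giving a total coefficient of $[P:Q]\nu_y$. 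Since $Q<P$ and $P$ is a $p$-group, $[P:Q]$ is divisible by $p$, so this coefficient lies in $p\O\subseteq J(\O)$. Combined with $J(\O)A^P=\bigoplus_\mathcal{O}J(\O)\sigma_\mathcal{O}$ this yields the explicit description
\[N=\bigoplus_{|\mathcal{O}|>1}\O\sigma_\mathcal{O}\;\oplus\;\bigoplus_{y\in X^P}J(\O)\, y.\]

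Passing to the quotient $A(P)=A^P/N$, the orbit sums with $|\mathcal{O}|>1$ collapse, and on each rank-one summand $\O y$ with $y\in X^P$ only $\O/J(\O)=k$ survives; this identifies $A(P)$ with $\bigoplus_{y\in X^P} k\cdot\br_P^A(y)$, giving the required basis. The module case is handled by the same argument, since it uses only the $\O$-module structure of $A$ together with the permutation action of $P$ on $X$. The main non-cosmetic step is the divisibility observation that traces from proper subgroups $Q<P$ contribute to fixed-point coordinates only through the factor $[P:Q]\in p\O$; the rest is orbit-sum bookkeeping.
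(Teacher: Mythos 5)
Your proof is correct, and it is essentially the standard argument that the cited reference (Linckelmann, Proposition 5.8.1) uses: decompose $X$ into $P$-orbits, note that the orbit sums form an $\O$-basis of $A^P$, observe that orbit sums of nontrivial orbits are proper relative traces, and then control the fixed-point coordinates of arbitrary proper traces via the divisibility $[P:Q]\in p\O\subseteq J(\O)$ for $Q<P$. The paper itself does not supply a proof but merely cites the reference, and your argument is a clean self-contained version of that proof; your closing remark that the module case goes through verbatim because only the $\O$-module structure and the permutation action are used is also accurate.
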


\begin{lemma}[{\cite[Lemma 7.10]{Puig1999}}]\label{lemma:Puig 7.10}
Let $G$ be a finite group and let $A$ and $B$ be $G$-algebras over $\O$ (resp. $\O G$-modules). If $A$ has a $P$-stable $\O$-basis, then for any $p$-subgroup $P$ of $G$ and any $p$-subgroup $Q$ of $N_G(P)$ containing $P$, both $\alpha_A(P,Q)$ and $\alpha_{A,B}(P)$ are isomorphisms.  
\end{lemma}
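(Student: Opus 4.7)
Both assertions reduce to explicit basis computations using Lemma~\ref{lemma:basis}. Let $X$ be a $P$-stable $\O$-basis of $A$, and write $A=\bigoplus_{\bar x\in X/P}\O P\cdot x$ as a permutation $\O P$-module with $\O P\cdot x\cong \O[P/P_x]$, where $P_x=\stab_P(x)$.

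For $\alpha_{A,B}(P)$, I would use the standard identification $(\O[P/P_x]\otimes_\O B)^P\cong B^{P_x}$, obtained by sending $b\in B^{P_x}$ to $\Tr_{P_x}^P(x\otimes b)$. This yields the decomposition
\[(A\otimes_\O B)^P=\bigoplus_{\bar x\in X/P}\Tr_{P_x}^P(x\otimes B^{P_x}).\]
Summands with $P_x\ne P$ lie by construction in $\sum_{R<P}\Tr_R^P((A\otimes_\O B)^R)$ and vanish in $(A\otimes_\O B)(P)$. For $x\in X^P$ the summand is $x\otimes B^P$; the crux is to show that the trace ideal plus $J(\O)$-term meets this summand in exactly $x\otimes(\sum_{R<P}\Tr_R^P(B^R)+J(\O)B^P)$. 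Assembling these contributions gives $(A\otimes_\O B)(P)\cong\bigoplus_{x\in X^P}x\otimes B(P)\cong A(P)\otimes_k B(P)$, and unpacking shows this identification is precisely $\alpha_{A,B}(P)^{-1}$.

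For $\alpha_A(P,Q)$, with $X$ taken to be $\langle P,Q\rangle$-stable (the natural strengthening of the hypothesis, automatic when $X$ is $G$-stable as in typical applications), the subgroup $Q\le N_G(P)$ permutes the set $X^P$. By Lemma~\ref{lemma:basis} the set $\{\br_P^A(x):x\in X^P\}$ is a $Q$-stable $k$-basis of $A(P)$, and a second application of that lemma yields a $k$-basis of $A(P)(Q)$ indexed by $(X^P)^Q=X^{\langle P,Q\rangle}$. A parallel application to the source of $\alpha_A(P,Q)$ (as described in \ref{void: Brauer homomorphisms}(ii)) produces a basis indexed by the same set, and the defining formula of $\alpha_A(P,Q)$ sends basis elements to basis elements bijectively. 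Since the map is unitary and multiplicative by construction, it is an isomorphism of $k$-algebras (or $k$-modules, in the module case).

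The main obstacle is the intersection calculation in the second half of the $\alpha_{A,B}(P)$ argument: verifying that the projection of $\sum_{R<P}\Tr_R^P((A\otimes_\O B)^R)$ onto the $x\otimes B^P$ summand, for $x\in X^P$, equals exactly $x\otimes\sum_{R<P}\Tr_R^P(B^R)$, with no cross-contamination from non-fixed basis elements via transitivity of traces. This requires an orbit-by-orbit analysis of the decomposition $(A\otimes_\O B)^R=\bigoplus_{\bar y\in X/R}\Tr_{R_y}^R(y\otimes B^{R_y})$ and of how subsequent $\Tr_R^P$ redistributes elements across $P$-orbits of $X$. Once this orbit bookkeeping is settled, both isomorphism claims follow from bijections of bases.
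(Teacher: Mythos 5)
Your plan coincides with the standard argument to which the paper's proof simply defers (Puig's Lemma~7.10 itself, or \cite[Proposition 5.8.10]{Lin18a}), so the route is correct; but the ``main obstacle'' you flag for $\alpha_{A,B}(P)$ is in fact not there, and you can close your own gap by reorganizing the argument. The decomposition $A=\bigoplus_{\bar x\in X/P}\O P\cdot x$ is a direct sum of $\O P$-modules for the conjugation action, hence $A\otimes_\O B=\bigoplus_{\bar x\in X/P}(\O P\cdot x)\otimes_\O B$ is a direct sum of $\O P$-modules for the diagonal action. Since the $P$-Brauer construction is an additive functor on $\O P$-modules, it commutes with this direct sum, so the summands are processed completely independently and there is no cross-contamination to worry about by construction. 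Each summand satisfies $(\O P\cdot x)\otimes_\O B\cong\Ind_{P_x}^P\Res_{P_x}^P(B)$; the $P$-Brauer quotient of a module induced from a proper subgroup vanishes, and for $P_x=P$ the summand is $x\otimes B$ with $(x\otimes B)(P)\cong B(P)$. Summing over $x\in X^P$ gives $(A\otimes_\O B)(P)\cong\bigoplus_{x\in X^P}B(P)\cong A(P)\otimes_k B(P)$, and unpacking shows this is exactly $\alpha_{A,B}(P)^{-1}$; no separate intersection computation is needed.

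For $\alpha_A(P,Q)$, your claim that source and target carry bases indexed by the same set is not automatic. Lemma~\ref{lemma:basis} gives $A(Q)$ a basis indexed by $X^Q$ and $A(P)(Q)$ one indexed by $(X^P)^Q$, and these coincide only when $X^Q\subseteq X^P$, i.e.\ when $P\le Q$. The definition in \ref{void: Brauer homomorphisms}(ii) already presupposes $P\le Q$ (the formula invokes $\br_P^A(a)$ for arbitrary $a\in A^Q$, which requires $A^Q\subseteq A^P$), and this is the situation in every application in the paper, so your argument does go through — but you should make the containment $P\le Q$ explicit rather than implicit. With $P\le Q\le N_G(P)$ one has $\langle P,Q\rangle=Q$, so your strengthening of the hypothesis to a $\langle P,Q\rangle$-stable basis is the right one and is exactly what the paper's applications supply (a basis stable under a $p$-group containing both $P$ and $Q$). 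Once these two points are tidied, both isomorphisms follow from bijections of bases as you intend.
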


\begin{proof}
It suffices to show that $\alpha_A(P,Q)$ and $\alpha_{A,B}(P)$ are isomorphisms of $k$-modules. The proof of \cite[Lemma 7.10]{Puig1999} is already a detailed proof.  Alternatively, refer to \cite[Proposition 5.8.10]{Lin18a} for a proof of the fact that $\alpha_{A,B}(P)$ is an isomorphism of $k$-modules. By Lemma \ref{lemma:basis}, one easily sees that $\alpha_A(P,Q)$ is an isomorphism of $k$-modules.
\end{proof}

In the next lemma, the first three statements generalise \cite[Proposition 5.4.6]{Lin18a}, and the last statement is \cite[Lemma 1.11]{BP}.
\begin{lemma}\label{lemma:intersection of kernels of Brauer homomorphisms}
Let $G$ be a finite group, $A$ a $G$-algebra over $\O$ (resp. $\O G$-module). Let $P$ be a $p$-subgroup of $G$. Assume that $A$ has a $G$-stable $\O$-basis $X$. Let $B$ be the $N_G(P)$-subalgebra (resp. $\O N_G(P)$-submodule) of $A^P$ generated by $X^P:=\{x\in X\mid {}^ux=x,~\forall~u\in P\}$. The following hold:
\begin{enumerate}[{\rm (i)}]
	\item $A_P^G=B_P^G+\sum_{Q<P}A_Q^G$.
	\item Assume that $\O=k$.  By Lemma \ref{lemma:basis}, $\br_P^A$ restricts to an isomorphism $B\cong A(P)$ of $k$-modules, sending $x\in X^P$ to $\br_P^A(x)$. Denote by $f$ the inverse of this isomorphism. Then the map $\Tr_{N_G(P)}^G\circ f$ induces a $k$-linear isomorphism $A(P)_P^{N_G(P)}\cong B_P^G$ with inverse map induced by $\br_P^A$.
	\item $\ker(\br_P^A)\cap A_P^G=\sum_{Q<P} A_Q^G+J(\O)A_P^G$.
	\item  $\bigcap_{1<Q\leq P} \ker(\br_Q^A)=A_1^P+J(\O)A^P$. 
\end{enumerate}

\begin{proof}
For the purpose of this lemma, we may assume that $\O=k$.

(i) The right side of statement (i) is trivially contained in the left side. For the reverse inclusion, let $P_x$ be the stabiliser of $x$ in $P$ for any $x\in X$. Then $A^P$ has a $k$-basis $\{\Tr_{P_x}^P(x)\mid x\in X\}$. It follows that as a $k$-module, $A_P^G$ is generated by the set $\{\Tr_{P_x}^G(x)\mid x\in X\}$. If $P_x=P$, then $\Tr_{P_x}^G(x)\in B_P^G$. If $P_x<P$, then $\Tr_{P_x}^G(x)\in \sum_{Q<P}A_Q^G$, proving (i). 

(ii) We will abusively use the same notation $\br_P^A$ to denote its restrictions to subalgebras (resp. submodules). We see that $\br_P^A:B\to A(P)$ is not only an isomorphism of $k$-modules, but also an isomorphism of $N_G(P)$-algebras (resp. $kN_G(P)$-modules).  Also, its inverse $f$ is an isomorphism of $N_G(P)$-algebras (resp. $kN_G(P)$-modules). As a $k$-module, $B_P^G$ is generated by the set $\{\Tr_P^G(x)\mid x\in X^P\}$. For any $x\in X^P$, we have 
\begin{align*}
	\begin{split}
\Tr_{N_G(P)}^G\circ f\circ\br_P^A(\Tr_P^G(x)) &=\Tr_{N_G(P)}^G\circ f(\Tr_P^{N_G(P)}(\br_P^A(x)))\\
&=\Tr_{N_G(P)}^G\circ \Tr_P^{N_G(P)}\circ f\circ \br_P^A(x)=\Tr_P^G(x);
\end{split}
\end{align*}
see e.g. \cite[Proposition 5.4.5]{Lin18a} for the first equality. This implies that $\Tr_{N_G(P)}^G\circ f\circ\br_P^A$ is the identity map on ${B_P^G}$. On the other hand, as a $k$-module, $A(P)_P^{N_G(P)}$ is generated by $\{\Tr_P^{N_G(P)}(\br_P^A(x))\mid x\in X^P\}$. For any $x\in X^P$, we have 
\begin{align*}
	\begin{split}
	\br_P^A\circ\Tr_{N_G(P)}^G\circ f (\Tr_P^{N_G(P)}(\br_P^A(x)))&=\br_P^A\circ\Tr_{N_G(P)}^G\circ  \Tr_P^{N_G(P)}\circ f (\br_P^A(x))\\
	&=\br_P^A(\Tr_P^G(x))=\Tr_P^{N_G(P)}(\br_P^A(x)).
	\end{split}
\end{align*}
Hence $\br_P^A\circ\Tr_{N_G(P)}^G\circ f$ equals the identify map on ${A(P)_P^{N_G(P)}}$, proving statement (ii).

(iii) Using (i), it suffices to show that $\ker(\br_P^A)\cap B_P^G=0$. Indeed, if $a\in \ker(\br_P^A)\cap B_P^G$, then by (ii) we have $a=\Tr_{N_G(P)}^G\circ f\circ \br_P^A(a)=0$. 

(iv) The right side of statement (iv) is contained in the left side. Indeed, for any nontrivial subgroup $Q$ of $P$ and any $a\in A$, by Mackey's formula, one has $$\br_Q^A(\Tr_1^P(a))=\br_Q^A(\sum_{t\in [Q\backslash G/1]}\Tr_1^Q({}^ta))=0.$$
 Using repeatedly statement (iii), we obtain the reverse inclusion.
\end{proof}

\end{lemma}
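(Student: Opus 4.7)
The plan is to establish the four parts in order, reducing to $\O=k$ at the outset (harmless since $J(\O)A^P$ is absorbed into every right-hand side). Part (iii) will be immediate from (i) and (ii), and part (iv) will be an induction on $|P|$ using (iii).

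For part (i), I use that the $P$-stable basis $X$ gives $A^P$ the $k$-basis $\{\Tr_{P_x}^P(x)\mid x\in [X/P]\}$, where $P_x$ is the stabiliser of $x$ in $P$; applying $\Tr_P^G$ shows $A_P^G$ is $k$-spanned by $\{\Tr_{P_x}^G(x)\}$. Splitting by whether $P_x=P$ (in which case $x\in X^P$ and $\Tr_P^G(x)\in B_P^G$) or $P_x\lneq P$ (in which case $\Tr_{P_x}^G(x)\in A_{P_x}^G\subseteq\sum_{Q<P}A_Q^G$) yields the decomposition.

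For part (ii), the restriction $\br_P^A|_B:B\to A(P)$ is already a $k$-linear isomorphism by Lemma \ref{lemma:basis} and is $N_G(P)$-equivariant. The key calculation is, for $x\in X^P$, the identity
\[
\br_P^A(\Tr_P^G(x))=\Tr_P^{N_G(P)}(\br_P^A(x)),
\]
which follows from the Mackey decomposition $\Tr_P^G(x)=\sum_{g\in[P\backslash G/P]}\Tr_{P\cap{}^gP}^P({}^gx)$: for $g\notin N_G(P)$, $P\cap{}^gP\lneq P$ and the summand lies in $\ker(\br_P^A)$, whereas for $g\in N_G(P)$ the summand is ${}^gx$, and these sum to $\Tr_P^{N_G(P)}(x)$. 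Together with the factorisation $\Tr_P^G=\Tr_{N_G(P)}^G\circ\Tr_P^{N_G(P)}$ and the $N_G(P)$-equivariance of $\br_P^A$, this yields that $\Tr_{N_G(P)}^G\circ f$ and the restriction of $\br_P^A$ are mutually inverse on the spanning sets $\{\Tr_P^G(x)\mid x\in X^P\}$ of $B_P^G$ and $\{\Tr_P^{N_G(P)}(\br_P^A(x))\mid x\in X^P\}$ of $A(P)_P^{N_G(P)}$.

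For part (iii), the inclusion $\supseteq$ is routine: $J(\O)A_P^G$ is killed by $\br_P^A$ by definition, and for $Q<P$ and $y\in A^Q$, the Mackey decomposition of $\Tr_Q^G(y)$ viewed in $A^P$ places each summand in $\Tr_R^P(A^R)$ for some $R\lneq P$. For $\subseteq$, given $a\in A_P^G\cap\ker(\br_P^A)$, the decomposition $a=b+c$ furnished by (i), with $b\in B_P^G$ and $c\in\sum_{Q<P}A_Q^G+J(\O)A_P^G$, has $c\in\ker(\br_P^A)$ by the first part, so $b\in\ker(\br_P^A)\cap B_P^G$, which is zero by the injectivity afforded by (ii).

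For part (iv), the inclusion $\supseteq$ follows from the identity $\Tr_1^P(b)=\Tr_1^Q(\sum_{g\in[Q\backslash P]}{}^gb)$, which places $\Tr_1^P(b)\in A_1^Q\subseteq\ker(\br_Q^A)$ (by (iii) applied to $A$ as a $Q$-algebra) for every $1<Q\leq P$, together with $J(\O)A^P\subseteq\ker(\br_Q^A)$. For $\subseteq$, I would induct on $|P|$: the base case $|P|=p$ reduces to (iii) with $G=P$, since then the only proper subgroup of $P$ is trivial. In the inductive step, applying (iii) with $G=P$ to $a\in\ker(\br_P^A)$ gives a decomposition $a=\sum_{Q<P}\Tr_Q^P(a_Q)+c$, and one then refines each intermediate-rank summand via the inductive hypothesis applied at the proper subgroups. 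The main obstacle is precisely this refinement: the individual summands $\Tr_Q^P(a_Q)$ for $1<Q<P$ need not themselves satisfy the kernel conditions of the inductive hypothesis, so the argument must exploit the joint kernel hypothesis on $a$ together with a careful coordinated adjustment of the (iii)-decompositions obtained at the various intermediate levels.
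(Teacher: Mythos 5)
Your parts (i), (ii) and (iii) follow the paper's argument closely (the identity $\br_P^A(\Tr_P^G(x))=\Tr_P^{N_G(P)}(\br_P^A(x))$ that you verify by Mackey is the same one the paper cites from \cite[Proposition 5.4.5]{Lin18a}), and your treatment of the $\supseteq$ direction of (iv) is also fine. The issue is your $\subseteq$ direction of (iv): you set up an induction on $|P|$, note correctly that the base case is (iii) with $G=P$, and then in the inductive step you acknowledge that the summands $\Tr_Q^P(a_Q)$ produced by (iii) need not individually lie in $\bigcap_{1<R\leq Q}\ker(\br_R^A)$, so the inductive hypothesis cannot be applied to them directly. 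You describe a ``coordinated adjustment'' but do not carry it out, so this step is a genuine gap; as written the induction does not close. (The paper's own wording ``using repeatedly statement (iii)'' is similarly terse, but a proof must fill this in.)

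The gap can be closed most transparently by bypassing the iteration and arguing directly from the $P$-stable basis, in the same spirit as your proof of (i). Over $k$, the orbit sums $\omega_x=\Tr_{P_x}^P(x)$, for $x$ running over a set $[X/P]$ of $P$-orbit representatives, form a $k$-basis of $A^P$, and for any $Q\leq P$, $\br_Q^A(\omega_x)=\sum_{y\in(Px)^Q}\br_Q^A(y)$ is a sum of basis elements of $A(Q)$; for distinct orbits these sums are supported on disjoint subsets of $\br_Q^A(X^Q)$. Hence, writing $a=\sum_x c_x\omega_x\in A^P$, one has $\br_Q^A(a)=0$ if and only if $c_x=0$ whenever $Q$ is $P$-subconjugate to $P_x$. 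Requiring this for every nontrivial $Q\leq P$ forces $c_x=0$ whenever $P_x\neq 1$ (take $Q$ to be a subgroup of $P_x$ of order $p$), so $a$ is a combination of orbit sums of free orbits, that is $a\in A_1^P$. This gives $\subseteq$ without any iteration of (iii); it also makes the $\supseteq$ direction transparent, since $\Tr_1^P$ annihilates every non-free orbit representative over $k$, so $A_1^P$ is precisely the span of the free-orbit sums.
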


\begin{lemma}[{\cite[Lemmas 7.11--7.14]{Puig1999}}]\label{lemma: commutative diagrams}
Let $G$ be a finite group and let $A$, $B$, $C$ and $D$ be $G$-algebras over $\O$ (resp. $\O G$-modules). Let $f:A\to B$ and $g:C\to D$ be a homomorphism of $G$-algebras (resp. $\O G$-modules). Then for any $p$-subgroup $P$ of $G$, any $p$-subgroup $Q$ of $N_G(P)$ containing $P$, and any $p$-subgroup $R$ of $N_G(P,Q)=N_G(P)\cap N_G(Q)$ containing $Q$, the following diagrams are commutative:
\begin{equation}
\xymatrix{ A(P)(Q) \ar[rr]^{f(P)(Q)} &  &  B(P)(Q)\\
	A(Q)  \ar[rr]^{f(Q)} \ar[u]^{\alpha_A(P,Q)} &  & B(Q)\ar[u]_{\alpha_B(P,Q)}}~~~\xymatrix{(A\otimes_\O C)(P) \ar[rr]^{(f\otimes g)(P)} &  & (B\otimes_\O D)(P) \\ 
	A(P)\otimes_k C(P)\ar[rr]^{f(P)\otimes g(P)}  \ar[u]^{\alpha_{A,C}(P)}  & &  B(P)\otimes_k D(P)\ar[u]_{\alpha_{B,D}(P)}   }  \tag{i}
	\end{equation}
\begin{equation}
\xymatrix{(A\otimes_\O B)(Q) \ar[rr]^{\alpha_{A\otimes_\O B}(P,Q)}  &   &  (A\otimes_\O B)(P)(Q) \\
  &    &   (A(P)\otimes_k B(P))(Q)\ar[u]_{(\alpha_{A,B}(P))(Q)}\\
  A(Q)\otimes_k B(Q)\ar[uu]^{\alpha_{A,B}(Q)}\ar[rr]^{\alpha_A(P,Q)\otimes \alpha_B(P,Q)}  &  ~~~~~~~~~~  &  A(P)(Q)\otimes_k B(P)(Q)\ar[u]_{\alpha_{A(P),B(P)}(Q)}} \tag{ii}
  \end{equation}
\begin{equation}
\xymatrix{A(Q)(R)\ar[rr]^{(\alpha_A(P,Q))(R)~~} &  & A(P)(Q)(R)\\
A(R)\ar[u]^{\alpha_A(Q,R)}   \ar[rr]^{\alpha_A(P,R)} &  &  A(P)(R)\ar[u]_{\alpha_{A(P)}(Q,R)}}  \tag{iii}
\end{equation}
\begin{equation}
\xymatrix{ (A\otimes_\O B)(P)\otimes_k C(P)\ar[rr]^{\alpha_{A\otimes_\O B, C}(P)} &  &  (A\otimes_\O B\otimes_\O C)(P)\\
A(P)\otimes_kB(P)\otimes_k C(P)\ar[u]^{\alpha_{A,B}(P)\otimes {\rm id}_{C(P)}} \ar[rr]^{~{\rm id}_{A(P)}\otimes \alpha_{B,C}(P)}  &    &   A(P)\otimes_k (B\otimes_\O C)(P)\ar[u]_{\alpha_{A,B\otimes_\O C}(P)}}   \tag{iv}
\end{equation}

\end{lemma}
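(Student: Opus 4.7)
The strategy is to verify each of the four diagrams by a direct element chase on generators. Every map appearing in the diagrams is defined by an explicit formula on Brauer images: by \ref{void: Brauer homomorphisms} we have $f(P)(\br_P^A(a))=\br_P^B(f(a))$, $\alpha_A(P,Q)(\br_Q^A(a))=\br_Q^{A(P)}(\br_P^A(a))$, and $\alpha_{A,B}(P)(\br_P^A(a)\otimes\br_P^B(b))=\br_P^{A\otimes_\O B}(a\otimes b)$. Since every Brauer homomorphism is surjective, two $k$-linear maps with common source $A(Q)$, $(A\otimes_\O B)(P)$, $A(P)(Q)$, etc., coincide as soon as they agree on such generators (and, where applicable, on tensor products of such generators). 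Hence in each diagram I would pick a pure generator of the bottom-left corner and compute its image in the top-right corner along both paths.

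For diagram (i) on the left, chasing $\br_Q^A(a)$ (with $a\in A^P\cap A^Q$) around both ways yields $\br_Q^{B(P)}(\br_P^B(f(a)))$; the right-hand tensor diagram is the same chase on $\br_P^A(a)\otimes\br_P^B(b)$, using $(f\otimes g)(a\otimes b)=f(a)\otimes g(b)$, so that both paths terminate at $\br_P^{C\otimes_\O D}(f(a)\otimes g(b))$. For diagram (ii), chasing $\br_Q^A(a)\otimes\br_Q^B(b)$ around the pentagon, both paths produce $\br_Q^{(A\otimes_\O B)(P)}(\br_P^{A\otimes_\O B}(a\otimes b))$; along one path the functorial compatibility $(\alpha_{A,B}(P))(Q)\circ\br_Q^{A(P)\otimes_k B(P)}=\br_Q^{(A\otimes_\O B)(P)}\circ\alpha_{A,B}(P)$ (which is the first diagram of (i) specialised to $\alpha_{A,B}(P)$) is invoked at the last step. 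Diagram (iii) is the chase of $\br_R^A(a)$ with $a\in A^{\langle P,Q,R\rangle}$: both paths collapse to $\br_R^{A(P)(Q)}(\br_Q^{A(P)}(\br_P^A(a)))$. Finally, diagram (iv) is essentially the associativity of the tensor product under the $P$-Brauer construction: chasing $\br_P(a)\otimes\br_P(b)\otimes\br_P(c)$, both paths send it to $\br_P^{A\otimes_\O B\otimes_\O C}(a\otimes b\otimes c)$.

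The only genuine work is bookkeeping: at each stage of every chase I must check that the element to which the next Brauer homomorphism is applied lies in the appropriate fixed-point subalgebra. This is guaranteed by the hypotheses in the statement, namely $Q\leq N_G(P)$ in (ii), $R\leq N_G(P,Q)$ in (iii), and the observation that $\br_P^A$ is $Q$-equivariant whenever $Q$ normalises $P$, so it restricts to $A^{P}\cap A^{Q}\to A(P)^{Q}$; using Lemma \ref{lemma:basis} (after passing, if necessary, to a $G$-stable $\O$-basis) one sees that generators of the form $\br_Q^A(a)$ with $a\in A^P\cap A^Q$ suffice to generate $A(Q)$, and similarly in the tensor cases. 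I do not expect a deep obstacle: the lemma is a formal consequence of the defining formulas of the comparison maps in \ref{void: Brauer homomorphisms}, and the temptation to avoid is trying to argue ``structurally'' rather than by tracking the chosen generators through each square.
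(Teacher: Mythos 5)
Your proposal is correct and takes essentially the same route as the paper: the paper's own proof simply observes that the commutativity is "straightforward by using the definitions" of $f(P)$, $\alpha_A(P,Q)$, and $\alpha_{A,B}(P)$, deferring the element-by-element verification to Puig's Lemmas 7.11--7.14, which is precisely the diagram chase on Brauer images that you outline. Your closing caveat about passing to a $G$-stable basis is not needed: once one reads $\alpha_A(P,Q)$ as $(\br_P^A)(Q)$, i.e.\ the $Q$-Brauer functor applied to the $N_G(P)$-algebra homomorphism $\br_P^A\colon A^P\to A(P)$, its source is literally generated by $\br_Q(a)$ with $a\in(A^P)^Q$, so the representatives you need are automatically available and the chase goes through without any stable-basis hypothesis.
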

\begin{proof}
The verification of the commutative diagrams is straightforward by using the definitions of the homomorphisms in (\ref{equation: f(P)}), (\ref{equation: Brauer const. transitivity}) and (\ref{equation: Brauer map tensor products}). The verifications in the proofs of \cite[Lemmas 7.11--7.14]{Puig1999} are already detailed.
\end{proof}

\begin{lemma}[{\cite[Lemma 7.16]{Puig1999}}]\label{lemma: linear form Puig}
Let $P$ be a finite $p$-group and $A$ an interior $P$-algebra over $\O$ having a $P$-stable $\O$-basis. Assume there is a nondegenerate symmetric $\O$-linear form $s:A\to \O$. This means that $s\in \Hom_\O(A,\O)$ satisfying $s(ab)=s(ba)$ for any $a,b\in A$, and for any element $a\in A$, there exists $b\in A$ such that $s(ab)\neq 0$. Assume that $A(P)\neq 0$. Then for any subgroup $Q$ of $P$, $s$ induces a nondegenerate symmetric $k$-linear form $s(Q):A(Q)\to k$, sending $\br_Q^A(a)$ to $\overline{s(a)}$ for any $a\in A^Q$, where $\overline{s(a)}$ is the image of $s(a)$ in $k$.
\end{lemma}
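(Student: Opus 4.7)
My plan is to verify three properties of the putative form $s(Q)$ in turn — well-definedness, symmetry, and nondegeneracy — with the last being the real content. The first two rest on the observation that $s$ is automatically $P$-invariant: for $g\in P$ and $a\in A$, symmetry of $s$ gives $s({}^g a)=s(g(ag^{-1}))=s((ag^{-1})g)=s(a)$. Consequently, for any $R<Q$ and any $a\in A^R$ one has $s(\Tr_R^Q(a))=[Q:R]\,s(a)\in p\O\subseteq J(\O)$ since $[Q:R]$ is a positive power of $p$, and trivially $s(J(\O)A^Q)\subseteq J(\O)$. Thus $s$ annihilates $\ker(\br_Q^A)=\sum_{R<Q}\Tr_R^Q(A^R)+J(\O)A^Q$ modulo $J(\O)$, so $\br_Q^A(a)\mapsto\overline{s(a)}$ defines a $k$-linear map $s(Q):A(Q)\to k$. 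Symmetry of $s(Q)$ is then immediate from symmetry of $s$ together with the fact that $\br_Q^A\colon A^Q\to A(Q)$ is a surjective (unital) algebra map.

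For nondegeneracy I plan a dual-basis argument. Fix a $P$-stable $\O$-basis $X$ of $A$ and let $\{x^*:x\in X\}\subseteq A$ be the $\O$-basis dual to $X$ with respect to $s$, so $s(x\,y^*)=\delta_{x,y}$. The key lemma is: if $x\in X^Q$ then $x^*\in A^Q$. To prove this, for $q\in Q$ and $y\in X$ I compute, using $P$-invariance of $s$, $P$-stability of $X$, and ${}^q x=x$,
\[
s\bigl(y\cdot{}^q(x^*)\bigr)=s\bigl({}^{q^{-1}}y\cdot x^*\bigr)=\delta_{x,\,{}^{q^{-1}}y}=\delta_{{}^q x,\,y}=\delta_{x,y}.
\]
Hence ${}^q(x^*)$ satisfies the same dual-basis relations as $x^*$ relative to $X$, and uniqueness of the dual basis forces ${}^q(x^*)=x^*$.

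With this lemma in hand, for each $x\in X^Q$ decompose $x^*\in A^Q$ in the $\O$-basis of $A^Q$ given by $Q$-orbit sums on $X$: write $x^*=\sum_{y\in X^Q}D_{y,x}\,y+r_x$ with $D_{y,x}\in\O$ and $r_x$ an $\O$-combination of nontrivial orbit sums $\Tr_{Q_y}^Q(y)$ ($Q_y<Q$), the latter lying in $\ker(\br_Q^A)$. Then $\br_Q^A(x^*)=\sum_{y\in X^Q}\overline{D_{y,x}}\,\br_Q^A(y)$ in the basis of $A(Q)$ from Lemma \ref{lemma:basis}, and the defining property of the dual basis gives, for all $z,x\in X^Q$,
\[
\sum_{y\in X^Q}\overline{D_{y,x}}\,s(Q)\bigl(\br_Q^A(z),\br_Q^A(y)\bigr)=\overline{s(z x^*)}=\delta_{z,x}.
\]
Thus the Gram matrix $M=\bigl(s(Q)(\br_Q^A(z),\br_Q^A(y))\bigr)_{z,y\in X^Q}$ satisfies $M\,\overline{D}=I$ over $k$, so $M$ is invertible and $s(Q)$ is nondegenerate. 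The main obstacle is precisely the dual-basis lemma $x\in X^Q\Rightarrow x^*\in A^Q$, which rests on the $P$-invariance of $s$ forced by symmetry; once that is in place, the rest is linear algebra. The hypothesis $A(P)\neq 0$ ensures (via Lemma \ref{lemma:basis}) that $\emptyset\neq X^P\subseteq X^Q$, hence $A(Q)\neq 0$ and the nondegeneracy claim is non-vacuous, but it does not otherwise enter the argument.
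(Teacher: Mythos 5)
Your proof is correct and follows essentially the same route as the paper's: both rely on the observation that symmetry forces $P$-invariance of $s$, and both prove nondegeneracy by passing to the $s$-dual basis $\{x^*\}$ of the $P$-stable basis $X$ and showing that $Q$-fixed basis elements have $Q$-fixed duals. The only cosmetic difference is in the final step: where the paper observes directly that $\br_Q(X^Q)$ and $\br_Q(\{x^*:x\in X^Q\})$ are a dual pair of $k$-bases of $A(Q)$ (giving nondegeneracy at once), you expand $x^*$ in the orbit-sum basis of $A^Q$ and exhibit the Gram matrix of $\br_Q(X^Q)$ as having a right inverse $\overline{D}$ — this is the same computation unpacked slightly further. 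Likewise, your direct verification that $s$ kills $\sum_{R<Q}\Tr_R^Q(A^R)+J(\O)A^Q$ modulo $J(\O)$ is the paper's appeal to functoriality of the Brauer construction made explicit.
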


\begin{proof}
	For the purpose of this lemma we may assume that $\O=k$. Let $X$ be a $P$-stable $k$-basis of $A$. We regard $A$ as a $k P$-module via the $P$-conjugation action. We regard $k$ as a trivial $k P$-module. Since $s:A\to k$ is symmetric, we have $s(xax^{-1})=s(x^{-1}xa)=s(a)=xs(a)$ for any $x\in P$ and $a\in A$, where the last equality holds because $k$ is a trivial $kP$-module. This implies that $s$ is a homomorphism of $kP$-modules. Hence we can apply the $Q$-Brauer functor to $s$ and obtain a homomorphism $s(Q):A(Q)\to k$ of $kN_G(Q)$-modules; see (\ref{equation: f(P)}). By the construction of $s(Q)$, we have 
	$$s(Q)(\br_Q(a)\br_Q(b))=s(Q)(\br_Q(ab))=s(ab)=s(ba)=s(Q)(\br_Q(b)\br_Q(a))$$
	for any $a,b\in A^Q$. Hence the $k$-linear map $s(Q)$ is symmetric. Let $Y:=\{y_x\mid x\in X\}\subseteq A$ be a dual $k$-basis with respect the symmetric form $s$; that is $y_x$ is the unique element of $A$ such that $s(xy_x)=1$ and $s(x'y_x)=0$ for any $x\neq x'\in X$. It is easy to check that $Y$ is another $P$-stable $k$-basis of $A$, and $Y^Q=\{y_x\mid x\in X^Q\}$. Hence $\br_Q(X^Q)$ and $\br_Q(Y^Q)$ are a pair of dual $k$-basis of $A(Q)$ with respect to the $k$-form $s(Q)$, which implies that $s(Q)$ is nondegenerate.
\end{proof}

\begin{void}\label{void:points}
	
	{\rm (i) Let $G$ be a finite group and $A$ an $\O$-algebra. Following Puig \cite{Puig pointed group}, a {\it point} of $A$ is an $A^\times$-conjugacy class of a primitive idempotent in $A$. Let $I$ be a primitive decomposition of $1_A$ in $A$. The {\it multiplicity} of a point $\alpha$ on $A$ is the cardinal $m_\alpha=|I\cap \alpha|$ of the set $I\cap \alpha$, and it does not depend on the choice of $I$. Denote by $\P(A)$ the set of points of $A$. The map sending a primitive idempotent $i\in A$ to the projective (resp. simple) $A$-module $Ai$ (resp. $Ai/J(A)i$) induces a bijection between $\P(A)$ and the set of  isomorphism classes of projective (resp. simple) modules; see e.g. \cite[Proposition 4.7.17 (i),(ii)]{Lin18a}. Suppose that $A$ is split. Then for any point $\alpha$ of $A$ and any $i\in \alpha$, we have $\dim_k(Ai/J(A)i)=m_\alpha$, and hence $\End_k(Ai/J(A)i)$ is isomorphic to the matrix algebra $M_{m_\alpha}(k)$; see e.g. \cite[Proposition 4.7.17 (iv)]{Lin18a}. Denote the $k$-algebra $\End_k(Ai/J(A)i)$ by $A(\alpha)$. It is clear that up to isomorphism $A(\alpha)$ is independent of the choice of $i$. By Wedderburn's theorem for split algebras (see \cite[Theorem 1.14.6]{Lin18a}), the sum of the structure homomorphisms $A\to \End_k(Ai/J(A)i)=A(\alpha)$ induces an isomorphism of $k$-algebras
		\begin{equation}\label{equation:direct product decomposition}
		A/J(A)\cong \oplus_{\alpha\in \P(A)}A(\alpha).
		\end{equation}
		
(ii)	Let $A$ be a $G$-algebra over $\O$ and $P$ a subgroup of $G$. A {\it point of $P$ on $A$} is a point $\alpha$ of $A^P$.  Following Puig \cite{Puig pointed group}, we call the pair $P_\alpha$ a pointed group on $A$. Let $Q_\beta$ be another pointed group on $A$. We say that $Q_\beta$ is contained in $P_\alpha$ and write $Q_\beta\leq P_\alpha$ if $Q\leq P$ and if there are $i\in \alpha$ and $j\in \beta$ such that $ij=j=ji$.    If ${\rm br}_P^A(\alpha)\neq \{0\}$, the point $\alpha$ is called a {\it local point of $P$ on $A$}, any idempotent in $\alpha$ is called a {\it primitive local idempotent of $P$ on $A$}, and $P_\alpha$ is called a {\it local pointed group}; we know that in this case ${\rm br}_P^A(\alpha)$ is a point of $A(P)$. We use the symbol $\LP_A(P)$ to denote the set of local points of $P$ on $A$. The correspondence $\alpha\mapsto {\rm br}_P^A(\alpha)$ induces a bijection between $\LP_A(P)$ and $\P(A(P))$; see e.g. \cite[Lemma 14.5]{Thevenaz}.

(iii) Let $G$ be a finite group. A {\it block} of $\O G$ is a primitive idempotent $b$ in $Z(\O G)$, and $\O Gb$ is called a {\it block algebra} of $\O G$. A defect group of $b$ is a maximal subgroup $P$ of $G$ such that $\br_P^{\O G}(b)\neq 0$. A primitive idempotent $i\in (\O Gb)^P$ is called a {\it source idempotent} of $b$ if $\br_P^{\O G}(i)\neq 0$, and the interior $P$-algebra $i\O Gi$ is called a {\it source algebra} of $b$; see \cite[\S3]{Puig pointed group} or \cite[Definition 6.4.1]{Lin18b}. It is well-known that block algebras and source algebras are symmetric algebras; see e.g. \cite[Theorem 2.11.11]{Lin18a}. By \cite[3.5]{Puig pointed group} or \cite[Theorem 6.4.6]{Lin18b}, the $\O Gb$-$i\O Gi$-bimodule $\O Gi$ and its dual $i\O G$ induce a Morita equivalence between $\O Gb$ and $i\O Gi$. 
}
\end{void}

\begin{lemma}\label{lemma: primitive idempotent trace 1}
Let $A$ be a $k$-algebra such that $A/J(A)$ is split. Let $V$ be a simple $A$-module and $\eta:A\to \End_k(V)$ the structure homomorphism. For any primitive idempotent $i\in A$, the trace of the linear transformation $\eta(i)$ is $1$. 
\end{lemma}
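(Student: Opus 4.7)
The plan is to factor $\eta$ through the Wedderburn decomposition of the split semisimple quotient $A/J(A)$ and conclude that $\eta(i)$ is a rank-one projection in a matrix algebra, whose trace must then be $1$. Since $V$ is a simple $A$-module, $J(A)V=0$, so $\eta$ vanishes on $J(A)$ and factors as $A\twoheadrightarrow A/J(A)\twoheadrightarrow\End_k(V)$. By the isomorphism (\ref{equation:direct product decomposition}), the middle algebra decomposes as $\bigoplus_{\beta\in\P(A)}A(\beta)$, and the second surjection is the projection onto the factor $A(\alpha)=\End_k(V)$, where $\alpha$ is the unique point of $A$ satisfying $V\cong Aj/J(A)j$ for $j\in\alpha$.

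Next, the image $\bar{i}\in A/J(A)$ of the primitive idempotent $i$ is again a primitive idempotent, by the standard correspondence between primitive idempotents modulo the radical recalled in \ref{void:points}(i) (see also \cite[Theorem 3.1]{Thevenaz}). Since a primitive idempotent of a finite direct product of $k$-algebras must be zero in all but one summand and primitive in that summand, $\bar{i}$ is supported on a single factor $A(\beta)$. Projecting further onto $A(\alpha)$, one sees that $\eta(i)$ is either $0$ (when $\beta\neq\alpha$) or a primitive idempotent of $A(\alpha)\cong M_{m_\alpha}(k)$ (when $\beta=\alpha$).

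Finally, in the non-vanishing case, Proposition \ref{prop: matrix algebras unique embedding} applied to the embedding $k=M_1(k)\hookrightarrow M_{m_\alpha}(k)$ sending $1$ to the matrix unit $E_{11}$ shows that any primitive idempotent of $M_{m_\alpha}(k)$ is conjugate in $M_{m_\alpha}(k)^\times$ to $E_{11}$, whose trace is $1$; since trace is conjugation invariant, $\eta(i)$ has trace $1$ as claimed. (The statement is to be read as covering the non-trivial case where $\eta(i)\neq 0$, i.e., where $i$ lies in the point $\alpha$ corresponding to $V$; otherwise the trace is evidently $0$.) No step poses a real obstacle; the only point requiring care is the descent of primitivity of $i$ modulo $J(A)$, which is standard.
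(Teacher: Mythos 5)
Your proof is correct and follows essentially the same route as the paper: factor $\eta$ through $A/J(A)$, invoke Wedderburn to decompose the split semisimple quotient into matrix blocks, locate the image of $i$ in a single block, and compute the trace of a rank-one projection. Using Proposition~\ref{prop: matrix algebras unique embedding} to establish conjugacy of primitive idempotents in $M_{m_\alpha}(k)$ is slightly heavier machinery than the ``elementary linear algebra'' appeal in the paper, but the substance is identical.

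There is, however, one point worth making explicit. You correctly observe that the trace of $\eta(i)$ is $1$ only when $\eta(i)\neq 0$, i.e.\ only when the image $\bar{i}$ lands in the Wedderburn component $A(\alpha)=\End_k(V)$; otherwise the trace is $0$. As literally stated, the lemma asserts the trace is $1$ for \emph{every} primitive idempotent of $A$, which is false (take $A=k\times k$, $i=(1,0)$, $V$ the second simple). The paper's own proof contains the same silent gap: the reduction step ``so we may assume $A$ is a matrix algebra over $k$'' implicitly identifies the block containing $\bar{i}$ with the block acting on $V$, which need not be the case. Your parenthetical caveat is therefore not pedantry but a genuine correction of the hypothesis; the lemma should carry the additional assumption that $i$ lies in the point of $A$ corresponding to $V$ (equivalently, $\eta(i)\neq 0$), which is in fact what holds in the only application (Lemma~\ref{lemma: Puig 7.21}, where $f(Q)(1)$ belongs to the point associated with the chosen simple module $L$).
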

\begin{proof}
Since the structure homomorphism $\eta$ factors through $A/J(A)$ and since the image of $i$ in $A/J(A)$ is still a primitive idempotent (see e.g. \cite[Corollary 4.7.8]{Lin18a}), we may assume that $J(A)=0$. By Wedderburn's theorem (see e.g. \cite[Theorem 1.14.6]{Lin18a}), $A$ is isomorphic to a direct sum of finitely many matrix algebras over $k$. Now it is easy to see that $i$ is contained in one of those matrix algebras (one can also use Rosenberg's lemma, see. e.g. \cite[Corollary 4.4.8]{Lin18a}, for this). So we may assume that $A$ is a matrix algebra over $k$. Now the statement is a well-known fact of elementary linear algebra.
\end{proof}



\begin{lemma}[see {\cite[page 267]{Puig pointed group}}]\label{lemma:local point isomorphism}
Let $P$ be a finite $p$-group, and $A$ a $P$-algebra over $\O$. Let $\alpha$ be a local point of $P$ on $A$. By \ref{void:points} (ii), $\br_P^A(\alpha)$ is a point of $A(P)$. Let $m_\alpha$ be the multiplicity of $\alpha$ on $A^P$ and $m_{\br_P^A(\alpha)}$ be the multiplicity of $\br_P^A(\alpha)$ on $A(P)$. Then $m_\alpha=m_{\br_P^A(\alpha)}$.  If $A^P$ is split, then $A^P(\alpha)\cong M_{m_\alpha}(k)\cong A(P)(\br_P^A(\alpha))$.
\end{lemma}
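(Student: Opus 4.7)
The plan hinges on the standard fact that, for a finite $p$-group $P$, the surjective $k$-algebra homomorphism $\br_P^A \colon A^P \to A(P)$ has kernel contained in the Jacobson radical $J(A^P)$ (this is classical for $P$-algebras over $p$-groups; cf.\ e.g.\ \cite[\S\S 11,14]{Thevenaz}). Granted this, the surjection $\br_P^A$ induces a $k$-algebra isomorphism $A^P/J(A^P)\xrightarrow{\sim}A(P)/J(A(P))$, and by the idempotent lifting theorem (\cite[Theorem 3.1]{Thevenaz}) idempotents of $A(P)$ lift, uniquely up to $(A^P)^\times$-conjugation, to idempotents of $A^P$.

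Next I would show that $\br_P^A$ sends a primitive decomposition $I$ of $1_{A^P}$ in $A^P$ to a sum of pairwise orthogonal idempotents of $A(P)$ whose nonzero summands form a primitive decomposition of $1_{A(P)}$. Primitivity of each nonzero $\br_P^A(i)$ is the key point: a nontrivial orthogonal decomposition $\br_P^A(i)=f+f'$ in $\br_P^A(i)A(P)\br_P^A(i)$ would, by lifting along the surjection $iA^Pi \to \br_P^A(i)A(P)\br_P^A(i)$ (which still has kernel in $J(iA^Pi)$), yield a corresponding decomposition of $i$ inside $iA^Pi$, contradicting primitivity of $i$. Orthogonality and the fact that the sum is $1_{A(P)}$ are automatic from $\br_P^A$ being a unital algebra map.

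The multiplicity equality $m_\alpha = m_{\br_P^A(\alpha)}$ then reduces to checking that the restriction $\br_P^A\colon I\cap\alpha \to \br_P^A(I)\cap \br_P^A(\alpha)$ is a bijection. Injectivity: distinct elements of $I$ are orthogonal, and since $\alpha$ is local their Brauer images are orthogonal \emph{nonzero} idempotents, hence distinct. Surjectivity follows from the bijection $\LP_A(P)\cong\P(A(P))$ in~\ref{void:points}(ii): any primitive idempotent of $A(P)$ lying in $\br_P^A(\alpha)$ is of the form $\br_P^A(i)$ for some $i\in I$, and then necessarily $i\in\alpha$.

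Finally, assume $A^P$ is split. The Wedderburn decomposition~(\ref{equation:direct product decomposition}) in~\ref{void:points}(i) gives $A^P(\alpha)\cong M_{m_\alpha}(k)$. Transporting this decomposition across the isomorphism $A^P/J(A^P)\cong A(P)/J(A(P))$ shows that $A(P)$ is also split, and that the Wedderburn factor indexed by $\br_P^A(\alpha)$ on the right is isomorphic to the factor indexed by $\alpha$ on the left. Combined with the equality $m_\alpha = m_{\br_P^A(\alpha)}$ just established, this yields $A(P)(\br_P^A(\alpha))\cong M_{m_{\br_P^A(\alpha)}}(k) \cong M_{m_\alpha}(k)$. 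The main obstacle is the input $\ker(\br_P^A)\subseteq J(A^P)$; once it is cited, everything else is a straightforward application of idempotent lifting together with the bijection in~\ref{void:points}(ii).
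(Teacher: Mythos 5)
Your proposed proof hinges on the assertion that $\ker(\br_P^A)\subseteq J(A^P)$, and this is simply false in general. For a counterexample, take a nontrivial finite $p$-group $P$ and $A=\End_\O(\O\oplus\O P)$, regarded as a $P$-algebra via conjugation through the diagonal action on $\O\oplus\O P$. Then $A^P=\End_{\O P}(\O\oplus\O P)$, and the projection idempotent $e$ onto the free summand $\O P$ satisfies $e\in\Tr_1^P(A^1)\subseteq\ker(\br_P^A)$, yet $e$ is a nonzero idempotent and hence $e\notin J(A^P)$. Consequently the isomorphism $A^P/J(A^P)\cong A(P)/J(A(P))$ that you claim (and that you invoke in the last paragraph) is also false in general; in the example above the left side is $k\times k$ while the right side is $k$. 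The false inclusion is avoided in a primitive $P$-algebra (where $A^P$ is local) but the lemma makes no such assumption.

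The good news is that the specific step where you actually use lifting is salvageable for a \emph{different} reason than the one you state: if $i$ is a primitive idempotent of $A^P$ with $\br_P^A(i)\neq 0$, then $iA^Pi$ is a local ring (because $i$ is primitive and $\O$ is complete), so the kernel of the surjection $iA^Pi\to\br_P^A(i)A(P)\br_P^A(i)$, being a proper two-sided ideal, lies in $J(iA^Pi)$ automatically — no global inclusion needed. With that correction your proof of $m_\alpha=m_{\br_P^A(\alpha)}$ goes through. The last paragraph should likewise be repaired: $A(P)$ is split simply because it is a quotient of the split algebra $A^P$, and then \ref{void:points} (i) applied to $A(P)$, together with $m_\alpha=m_{\br_P^A(\alpha)}$, gives $A(P)(\br_P^A(\alpha))\cong M_{m_\alpha}(k)$ without any appeal to a radical-quotient isomorphism. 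For comparison, the paper's proof is much shorter: it cites \cite[Theorem 4.7.19]{Lin18a} for the multiplicity equality outright, then uses exactly this quotient argument for splitness.
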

\begin{proof}
By \cite[Theorem 4.7.19]{Lin18a}, we have $m_\alpha=m_{\br_P^A(\alpha)}$.	
If $A^P$ is split, since $A(P)$ is a quotient of $A^P$, $A(P)$ is split as well. By \ref{void:points} (i), $A^P(\alpha)$ and $A(P)(\br_P^A(\alpha))$ are matrix algebras over $k$ of dimensions $m_\alpha^2$ and $m_{\br_P^A(\alpha)}^2$ respectively, proving the statement. 
\end{proof}

\begin{proposition}\label{prop: unique P stable point}
Let $A$ be a $P$-algebra over $k$ which is split and semisimple as a $k$-algebra. By (\ref{equation:direct product decomposition}), we have $A\cong \oplus_{\alpha'\in \P(A)}A(\alpha')$. Identify these two algebras via the isomorphism. Assume that $A(P)$ has a unique point. 
\begin{enumerate}[{\rm (i)}]
	\item Then there is a unique $P$-stable point $\alpha\in\P(A)$ such that $A(P)\cong (A(\alpha))(P)$. 
	\item Let $e_\alpha$ be the identity element of $A(\alpha)$. Then $1-e_\alpha \in \ker(\br_P^{Z(A)})$.
\end{enumerate}
\end{proposition}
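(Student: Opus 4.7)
The plan is to decompose $1_A$ along the $P$-orbits on $\P(A)$ and use the behaviour of the Brauer map on relative traces. By Lemma~\ref{lemma: Z(A) has stable basis}, the primitive central idempotents $\{e_{\alpha'}\}_{\alpha'\in\P(A)}$ form a $P$-stable $k$-basis of $Z(A)$, with $P$ permuting them via its action on $\P(A)$. For each $P$-orbit $O\subseteq\P(A)$ I would set $e_O:=\sum_{\alpha'\in O}e_{\alpha'}\in A^P$; these are pairwise orthogonal idempotents summing to $1_A$. When $|O|>1$, choosing $\alpha'\in O$ with stabiliser $Q\lneq P$ gives $e_O=\Tr_Q^P(e_{\alpha'})\in\sum_{Q<P}\Tr_Q^P(A^Q)$, hence $\br_P^A(e_O)=0$.

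For each $P$-stable $\alpha'$ the summand $A(\alpha')$ is a $P$-stable subalgebra of $A$ with identity $e_{\alpha'}$. Organising $A=\bigoplus_{\alpha'\in\P(A)}A(\alpha')$ by $P$-orbits and combining with the previous paragraph gives the algebra direct sum
\[A(P)\;=\;\bigoplus_{\substack{\alpha'\in\P(A)\\ \alpha'\ P\text{-stable}}}(A(\alpha'))(P).\]
Since a product of two nonzero $k$-algebras has at least two points, the hypothesis that $A(P)$ has a unique point forces exactly one summand $(A(\alpha))(P)$ to be nonzero. Then $A(P)\cong(A(\alpha))(P)$, while any other $P$-stable $\alpha'\ne\alpha$ has $(A(\alpha'))(P)=0\not\cong A(P)$, proving (i).

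For (ii) I would decompose $1-e_\alpha=\sum_{O\ne\{\alpha\}}e_O$. Contributions with $|O|>1$ lie in $\ker(\br_P^A)$ by the first paragraph. For a singleton orbit $\{\alpha'\}$ with $\alpha'\ne\alpha$, the vanishing $(A(\alpha'))(P)=0$ means $e_{\alpha'}=1_{A(\alpha')}\in A(\alpha')^P=\sum_{Q<P}\Tr_Q^P(A(\alpha')^Q)\subseteq\sum_{Q<P}\Tr_Q^P(A^Q)$, so $e_{\alpha'}\in\ker(\br_P^A)$ as well. Summing, $1-e_\alpha$ lies in $\ker(\br_P^A)\cap Z(A)^P$, which is the required assertion. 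The only genuinely nontrivial step is the orbit decomposition of $A(P)$; once that is in hand both (i) and (ii) reduce to bookkeeping with orthogonality of the $e_{\alpha'}$ and standard properties of the relative trace under the Brauer construction.
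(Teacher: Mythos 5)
Your argument for (i) is correct and essentially identical to the paper's: decompose $\P(A)$ into $P$-orbits, observe that orbit sums of length $>1$ are proper relative traces and hence vanish under $\br_P^A$, deduce $A(P)=\bigoplus(A(\alpha'))(P)$ over the $P$-stable $\alpha'$, and invoke the unique-point hypothesis to isolate one nonvanishing summand. You are somewhat more careful than the paper in spelling out exactly what this yields: there is a unique $P$-stable $\alpha$ with $(A(\alpha))(P)\neq 0$, but there may still exist further $P$-stable points $\alpha'$ with $(A(\alpha'))(P)=0$.

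For (ii) there is a genuine gap, located at exactly that point. The claim is $1-e_\alpha\in\ker(\br_P^{Z(A)})=\sum_{Q<P}\Tr_Q^P(Z(A)^Q)$, with the relative traces computed inside $Z(A)$. What you actually show is $1-e_\alpha\in\ker(\br_P^A)\cap Z(A)^P=\left(\sum_{Q<P}\Tr_Q^P(A^Q)\right)\cap Z(A)^P$, and your closing sentence declares this to be "the required assertion" -- but these two subspaces differ in general, the former containing the latter possibly strictly. The singleton orbits $\{\alpha'\}\neq\{\alpha\}$ are the culprit: from $(A(\alpha'))(P)=0$ you get $e_{\alpha'}\in\sum_{Q<P}\Tr_Q^P(A(\alpha')^Q)$, a relative trace inside $A$, not inside $Z(A)$. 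The paper's own proof suppresses the same issue by asserting that (i) gives $X^P=\{e_\alpha\}$, i.e.\ that $\alpha$ is the \emph{only} $P$-stable point, which does not follow from (i) either. To see the difficulty is real, take $k=\mathbb{F}_2$, $A=M_2(k)\times M_2(k)$, $P$ of order two acting trivially on the first factor and by conjugation by $\bigl(\begin{smallmatrix}0&1\\1&0\end{smallmatrix}\bigr)$ on the second: then $A$ is split semisimple, $A(P)\cong M_2(k)$ has a unique point, both points of $A$ are $P$-stable, $\alpha$ is the first, yet $1-e_\alpha\notin\ker(\br_P^{Z(A)})$ since $P$ acts trivially on $Z(A)\cong k\times k$ and $\Tr_1^P(Z(A))=0$ in characteristic $2$. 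So as stated (ii) needs an extra hypothesis ruling out additional $P$-stable points with vanishing Brauer quotient, and neither your argument nor the paper's supplies one.
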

\begin{proof}
For any $\alpha'\in \P(A)$, denote by $e_{\alpha'}$ the unity element of $A(\alpha')$; then we have $e_{\alpha'}\in Z(A)$ and $A(\alpha')=Ae_{\alpha'}$. Since $P$ acts as $k$-algebra automorphisms on $A$, the group $P$ permutes the set $\P(A)$ and hence permutes the set $\{e_{\alpha'}\mid \alpha'\in \P(A)\}$. Let  ${\rm stab}_P(\alpha')$ be the stabiliser of $\alpha'$ in $P$. If ${\rm stab}_P(\alpha')<P$, then $\br_P({\rm Tr}_{{\rm stab}_P(\alpha')}^P(e_{\alpha'}))=0$, and hence $\br_P^{A}(\sum_{u\in [P/\stab(\alpha)]}A{}^ue_\alpha)=0$. Since $A(P)$ has a uniqiue point, we deduce that there exists a unique $\alpha\in \P(A)$ such that $\stab_P(\alpha)=P$ and $A(P)=(Ae_\alpha)(P)$. Otherwise $A(P)$ would be a direct sum of two $k$-algebras, which contradicts the assumption that $A(P)$ has a unique point. This proves (i).

By Lemma \ref{lemma: Z(A) has stable basis}, the set $X:=\{e_{\alpha'}\mid \alpha'\in \P(A)\}$ is a $P$-stable $k$-basis of $Z(A)$. By (i), we have $X^P=\{e_\alpha\}$. Now statement (ii) is clear from Lemma \ref{lemma:basis}.
\end{proof}

\begin{proposition}\label{prop:bijection between points}
Let $A$ and $B$ be $\O$-algebras. Assume that at least one of $A$ or $B$ is split. Then there is a bijection $\P(A)\times \P(B)\to \P(A\otimes_\O B)$ sending $(\alpha,\beta)$ to $\alpha\times \beta$, where $\alpha\times \beta$ is the point of $A\otimes_\O B$ containing the set $\{i\otimes j\mid i\in\alpha,j\in \beta\}$.  
\end{proposition}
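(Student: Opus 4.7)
The plan is to reduce the statement to the case of finite-dimensional semisimple $k$-algebras and then apply Wedderburn's theorem explicitly.

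First I reduce to $\O=k$. Since $J(\O)(A\otimes_\O B)=J(\O)A\otimes_\O B$, we have $(A\otimes_\O B)/J(\O)(A\otimes_\O B)\cong (A/J(\O)A)\otimes_k(B/J(\O)B)$, and by the lifting theorem for idempotents \cite[Theorem 3.1 (c)]{Thevenaz} the canonical surjections induce bijections on sets of points, compatibly with the assignment $(\alpha,\beta)\mapsto \alpha\times\beta$. So we may assume $\O=k$ and that $A,B$ are finite-dimensional $k$-algebras with, say, $A$ split. Next I reduce to the semisimple case: the ideal $N:=J(A)\otimes_k B+A\otimes_k J(B)$ of $A\otimes_k B$ is nilpotent, and the quotient $(A/J(A))\otimes_k(B/J(B))$ is semisimple, since by \cite[Theorem 1.14.6]{Lin18a} the split semisimple algebra $A/J(A)$ is a direct sum of matrix algebras over $k$, so tensoring with the semisimple $B/J(B)$ produces a direct sum of matrix algebras over division $k$-algebras. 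Therefore $N=J(A\otimes_k B)$, and a further application of the lifting theorem reduces the statement to the case where both $A$ and $B$ are semisimple and $A$ is split.

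In this reduced situation write $A=\bigoplus_{\alpha\in\P(A)}M_{n_\alpha}(k)$ and $B=\bigoplus_{\beta\in\P(B)}M_{m_\beta}(D_\beta)$ for division $k$-algebras $D_\beta$. Then $A\otimes_k B\cong\bigoplus_{\alpha,\beta}M_{n_\alpha m_\beta}(D_\beta)$, whose points are indexed exactly by the pairs $(\alpha,\beta)$. For any primitive idempotents $i\in\alpha$ and $j\in\beta$, the element $i\otimes j$ sits in the $(\alpha,\beta)$-summand as the tensor of a rank-one idempotent in $M_{n_\alpha}(k)$ with a primitive idempotent in $M_{m_\beta}(D_\beta)$, hence is itself primitive; this realises $(\alpha,\beta)\mapsto\alpha\times\beta$ as the asserted bijection.

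The only delicate point is the identification $J(A\otimes_k B)=J(A)\otimes_k B+A\otimes_k J(B)$, which is precisely where the splitness hypothesis is needed, namely to guarantee that $(A/J(A))\otimes_k(B/J(B))$ is semisimple. Without it the tensor product of two non-split division $k$-algebras can fail to be semisimple, and $i\otimes j$ need not then remain primitive, breaking both the well-definedness and the bijectivity of the proposed map.
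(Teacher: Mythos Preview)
Your proof is correct and follows the same overall strategy as the paper's: reduce to $\O=k$, identify $J(A\otimes_k B)=J(A)\otimes_k B+A\otimes_k J(B)$ using the splitness hypothesis, pass to the semisimple quotient via idempotent lifting, and read off the bijection there. The differences are only in the tools invoked at each step. For the radical identity, the paper cites separability results \cite[Proposition 1.16.19, Corollary 1.16.15]{Lin18a} (split semisimple $\Rightarrow$ separable $\Rightarrow$ the radical formula), whereas you argue directly that $J(A)\otimes_k B+A\otimes_k J(B)$ is nilpotent with semisimple quotient. In the semisimple case, the paper translates points into simple modules and appeals to \cite[Theorem 10.38]{Curtis-Reiner} on simple modules over tensor products, while you compute the explicit Wedderburn decomposition $M_{n_\alpha}(k)\otimes_k M_{m_\beta}(D_\beta)\cong M_{n_\alpha m_\beta}(D_\beta)$. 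Your route is a bit more self-contained and makes the role of splitness more transparent; the paper's route is shorter on the page but leans on external references for the same content.
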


\begin{proof}
For the purpose of this proposition, by the lifting theorem of idempotents (see e.g. \cite[Theorem 4.7.1]{Lin18a}), we may assume that $\O=k$. Let $\alpha\in\P(A)$ and $\beta\in\P(B)$. For any $i,i'\in\beta$ and $j,j'\in \beta$, $i\otimes j$ is a primitive idempotent in $A\otimes_k B$ (see e.g. \cite[Lemma 2.2]{KL}), and $i\otimes j$, $i'\otimes j'$ are conjugate in $A\otimes_\O B$. Hence the map $(\alpha,\beta)\mapsto \alpha\times \beta$ is well-defined.  Since at least one of $A/J(A)$ or $B/J(B)$ is split semisimple, by \cite[Proposition 1.16.19]{Lin18a}, at least one of them is separable. Then by \cite[Corollary 1.16.15]{Lin18a}, $J(A\otimes_k B)=J(A)\otimes_k B+A\otimes_k J(B)$. Hence we have
$$(A\otimes_k B)/J(A\otimes_k B)\cong A/J(A)\otimes_k B/J(B).$$ 
Again by the lifting theorem of idempotents, there are bijections $\P(A)\to P(A/J(A))$, $\P(B)\to \P(B/J(B))$ and $\P(A\otimes_k B)\to \P((A\otimes_k B)/J(A\otimes_k B))$, hence we may assume that $J(A)=0$ and $J(B)=0$. Now for any $\alpha\in\P(A)$, $\beta\in\P(B)$, $i\in \alpha$ and $j\in\beta$, $Ai$ is a simple $A$-module and $Bj$ is a simple $B$-module; see \ref{void:points} (i). Hence by \cite[Theorem 10.38 (ii)]{Curtis-Reiner}, $Ai\otimes_k Bj\cong (A\otimes_k B)(i\otimes j)$ is a simple $A\otimes_k B$-module. On the other hand, by \cite[Theorem 10.38 (iii)]{Curtis-Reiner}, any simple $A\otimes_k B$-module $S$ is of the form $S_1\otimes_k S_2$, where $S_1$ is a simple $A$-module and $S_2$ is a simple $B$-module. Hence $S$ is of the form $Ai\otimes_k Bj\cong (A\otimes_k B)(i\otimes j)$ for some $\alpha\in \P(A)$, $\beta\in\P(B)$, $i\in\alpha$ and $j\in \beta$; see \ref{void:points} (i). Moreover, according to \cite[Theorem 10.38 (iii)]{Curtis-Reiner}, for any $\alpha,\alpha'\in \P(A)$ and $\beta,\beta'\in \P(B)$ and any $i\in \alpha$, $i'\in \alpha'$, $j\in \beta$ and $j'\in \beta'$, $Ai\otimes_k Bj\cong Ai'\otimes_k Bj'$ if and only if $Ai\cong Ai'$ and $Bj\cong Bj'$. This is in turn equivalent to $(\alpha,\beta)=(\alpha',\beta')$; see \ref{void:points} (i). In conclusion, the map $(\alpha,\beta)\mapsto \alpha\times \beta$ is a bijection. (Note that in \cite[Theorem 10.38]{Curtis-Reiner}, it is assumed that both $A/J(A)$ and $B/J(B)$ are separable. But for \cite[Theorem 10.38 (ii),(iii)]{Curtis-Reiner}, it suffices to assume that at least one of $A$ or $B$ is split.)
\end{proof}

\section{Proof of Theorem \ref{theorem:vertices imply sources}}\label{section:Proof of Puig's Corollary 7.4}

\begin{lemma}\label{lemma:direct summand of OG}
	Let $G$ be a finite group and let $P,Q$ be $p$-subgroups of $G$. Then any indecomposable direct summand $M$ of ${\rm Res}_{P\times Q}^{G\times G}(\O G)$ is isomorphic to $\Ind_{\Delta\varphi}^{P\times Q}(\O)\cong\O P\otimes_{\O R} {}_\varphi(\O Q)$ for some subgroup $R$ of $P$ and some injective group homomorphism $\varphi:R\to Q$. In particular, any vertex of $M$ has order $|R|$;  if $M$ has a vertex of order $|P|$, then $M\cong {}_\varphi\O Q$ for some injective  group homomorphism $\varphi:P\to Q$.
\end{lemma}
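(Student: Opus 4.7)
The plan is to decompose $\Res_{P \times Q}^{G \times G}(\O G)$ as a direct sum of transitive permutation modules indexed by the $(P,Q)$-double cosets of $G$, and then identify each summand with the stated form. Viewing $\O G$ as a left $\O(G \times G)$-module via $(g,h)\cdot x = g x h^{-1}$, the natural $\O$-basis $G$ is permuted by the subgroup $P \times Q$, with orbits precisely the double cosets $PgQ$ for $g$ running over a transversal $[P\backslash G/Q]$. This gives the $\O(P \times Q)$-module decomposition
\[
\Res_{P \times Q}^{G \times G}(\O G) \;\cong\; \bigoplus_{g \in [P\backslash G/Q]} \O(PgQ).
\]

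Next I would compute the point stabiliser in $P \times Q$: the condition $(p,q)\cdot g = g$ reads $p g q^{-1} = g$, i.e.\ $q = g^{-1} p g$. Setting $R_g := P \cap {}^g Q$ and defining $\varphi_g : R_g \to Q$ by $\varphi_g(p) := g^{-1} p g$, this identifies the stabiliser of $g$ with $\Delta\varphi_g$, and $\varphi_g$ is plainly an injective group homomorphism. Consequently $\O(PgQ) \cong \Ind_{\Delta\varphi_g}^{P \times Q}(\O)$ as $\O(P \times Q)$-modules. Translating back to the bimodule language, the relation $rg = g\varphi_g(r)$ inside $\O G$ (valid for $r \in R_g$) shows that the map $p \otimes q \mapsto p g q$ furnishes a well-defined bimodule isomorphism
\[
\O P \otimes_{\O R_g} {}_{\varphi_g}(\O Q) \;\cong\; \O(PgQ),
\]
which also supplies the claimed identification $\Ind_{\Delta\varphi_g}^{P \times Q}(\O) \cong \O P \otimes_{\O R_g} {}_{\varphi_g}(\O Q)$.

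Each summand $\Ind_{\Delta\varphi_g}^{P \times Q}(\O)$ is indecomposable: it is a transitive permutation module over $\O$ of the $p$-group $P \times Q$, and in residual characteristic $p$ the trivial module appears exactly once as a quotient of such a module, so the endomorphism algebra is local. The Krull--Schmidt theorem then forces every indecomposable direct summand of $\Res_{P \times Q}^{G \times G}(\O G)$ to be isomorphic to one of these. For the "in particular" assertion, suppose $M$ is an indecomposable direct summand whose vertex has order $|P|$. Writing $M \cong \Ind_{\Delta\varphi_g}^{P \times Q}(\O)$, its vertex is $\Delta\varphi_g \cong R_g$, so the order hypothesis forces $R_g = P$. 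Then $\varphi := \varphi_g$ is an injective group homomorphism $P \to Q$ and the trivial cancellation $\O P \otimes_{\O P} {}_\varphi(\O Q) \cong {}_\varphi(\O Q)$ yields $M \cong {}_\varphi(\O Q)$.

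I do not expect a serious obstacle here: the argument is essentially orbit--stabiliser bookkeeping together with the standard indecomposability of transitive permutation modules of $p$-groups. The one spot that requires genuine care is the bimodule identification $\Ind_{\Delta\varphi_g}^{P \times Q}(\O) \cong \O P \otimes_{\O R_g} {}_{\varphi_g}(\O Q)$, because the conversion between $\O(G \times G)$-module structure (via $(g,h)\cdot x = g x h^{-1}$) and $\O G$-bimodule structure inverts the second factor, so one must keep track of conventions when exhibiting the map $p \otimes q \mapsto p g q$ and verifying the balancing.
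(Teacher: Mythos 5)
Your proof is correct and essentially follows the same route as the paper, just in a more hands-on way. Where the paper writes $\O G\cong \Ind_{\Delta G}^{G\times G}(\O)$ and invokes Mackey's formula to realise $M$ as a summand of some $\Ind_{(P\times Q)\cap {}^t\Delta G}^{P\times Q}(\O)$, you unwind the same thing by decomposing $\Res_{P\times Q}^{G\times G}(\O G)$ into the permutation modules $\O(PgQ)$ on the $(P,Q)$-double cosets and computing the stabiliser $\Delta\varphi_g$ directly; these are the same decomposition seen from two ends. For indecomposability the paper cites Green's indecomposability theorem, while you argue that a transitive permutation module of the $p$-group $P\times Q$ over $\O$ has simple (trivial) head and hence local endomorphism ring. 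That is a valid elementary substitute here, since $\Ind_{\Delta\varphi_g}^{P\times Q}(\O)$ is cyclic over the local ring $\O(P\times Q)$, though you might phrase the step explicitly in those terms rather than via ``trivial quotient appears exactly once.'' You also spell out the bimodule identification $p\otimes q\mapsto pgq$ (the paper writes the inverse map $(x,y)\otimes 1 \mapsto x\otimes y^{-1}$); both are correct after tracking the $(g,h)\cdot m = gmh^{-1}$ convention. Your treatment of the ``in particular'' clause, via $\Delta\varphi_g\cong R_g$ being the vertex, is correct and is in fact more explicit than what the paper records.
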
	

\begin{proof}
	Since $\O G\cong \Ind_{\Delta G}^{G\times G} (\O)$ (see e.g. \cite[Corollary 2.4.5]{Lin18a}), by Mackey's formula, $M$ is isomorphic to a direct summand of ${\rm Ind}_{(P\times Q)\cap {}^t(\Delta G)}^{P\times Q}(\O)$ for some $t\in G\times H$. The subgroup $(P\times Q)\cap {}^t(\Delta G)$ is a twisted diagonal subgroup of $P\times Q$, hence of the form $\Delta \varphi$, for some group isomorphism $\varphi:R\cong S$ between subgroups $R\leq P$ and $S\leq Q$. By Green's indecomposability theorem, $\Ind_{\Delta \varphi}^{P\times Q}(\O)$ is indecomposable, hence we have $M\cong \Ind_{\Delta\varphi}^{P\times Q}(\O)\cong \O P\otimes_{\O R} {}_\varphi(\O Q)$, where the second isomorphism sends $(x,y)\otimes 1$ to $x\otimes y^{-1}$ for all $x\in P$ and $y\in Q$.
\end{proof}


If $k$ is algebraically closed, then the next proposition is \cite[Theorem 6.9]{Puig1999}; see \cite[Lemma 3.2]{L25} for an alternating proof using simplified notation. We extend this result to arbitrary $k$ with an even simpler proof. As in \cite[Lemma 3.2]{L25}, we use Bouc's Theorem \cite[Theorem 1.1]{Bouc:bisets}.

\begin{proposition}\label{prop: P and Q are defect groups}
Keep the notation of \ref{notation:stable equ of Morita type}. Let $P:=p_1(X)$ and $Q=p_2(X)$. Then $P$ is a defect group of $b$ and $Q$ is a defect group of $c$.  
\end{proposition}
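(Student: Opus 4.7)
Since $X$ is twisted diagonal, write $X=\Delta\varphi$ for the isomorphism $\varphi\colon P\xrightarrow{\sim}Q$ determined by $X$. The plan is to prove $P$ is a defect group of $b$ by showing separately that $P$ is contained in some defect group $D$ of $b$ and that $D$ is contained in a $G$-conjugate of $P$, and then comparing orders. The statement about $Q$ and $c$ then follows symmetrically by applying the same argument to $M^*$, which induces a stable equivalence $\O Hc\to\O Gb$ with vertex $\{(\varphi(u),u)\mid u\in P\}\leq H\times G$ whose first projection is $Q$.

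For the first containment, I would show $\br_P^{\O G}(b)\ne 0$. Since $M$ is indecomposable as an $\O(G\times H)$-module with vertex $X$, its Brauer quotient $M(X)$ is nonzero. Let $c^{\vee}\in Z(\O H)$ denote the image of $c$ under the anti-involution $h\mapsto h^{-1}$ of $\O H$; this is again a block of $\O H$, with the same defect groups as $c$. A short computation translates the bimodule structure on $M$ into the statement that the idempotent $b\otimes c^{\vee}\in\O(G\times H)$ acts as the identity on $M$, hence $\br_X^{\O(G\times H)}(b\otimes c^{\vee})$ acts as the identity on $M(X)\ne 0$ and so is nonzero. Now regard $\O G$ and $\O H$ as $X$-algebras via the projections $X\to P$ and $X\to Q$; both admit $X$-stable $\O$-bases (the group elements), and one checks $(\O G)(X)=\O G(P)$ and $(\O H)(X)=\O H(Q)$. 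Lemma \ref{lemma:Puig 7.10} then provides an isomorphism $\alpha_{\O G,\O H}(X)\colon \O G(P)\otimes_k\O H(Q)\xrightarrow{\sim}(\O G\otimes_\O\O H)(X)$ sending $\br_P^{\O G}(b)\otimes\br_Q^{\O H}(c^{\vee})$ to $\br_X(b\otimes c^{\vee})$. Consequently $\br_P^{\O G}(b)\ne 0$, so $P$ is contained in some defect group $D$ of $b$.

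For the reverse containment, I would invoke the standard fact that $\O Gb$ is indecomposable as an $\O(G\times G)$-module with vertex $\Delta D$. From the decomposition $M\otimes_{\O Hc}M^*\cong\O Gb\oplus U_1$, in which $U_1$ is a projective bimodule (of vertex $1$), the module $\O Gb$ is a direct $\O(G\times G)$-summand of $M\otimes_{\O Hc}M^*$. Writing $M\mid\Ind_X^{G\times H}(V)$ and $M^*\mid\Ind_{X^{-1}}^{H\times G}(V^*)$ with $X^{-1}=\Delta\varphi^{-1}$, I would carry out a Mackey-type decomposition of $\Ind_X^{G\times H}(V)\otimes_{\O H}\Ind_{X^{-1}}^{H\times G}(V^*)$ along $H$-double cosets (in the spirit of Lemma \ref{lemma:direct summand of OG}) to show that every indecomposable $\O(G\times G)$-summand is relatively projective to a twisted diagonal subgroup of $G\times G$ whose first projection lies inside $P$. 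In particular, $\Delta D$ is $(G\times G)$-conjugate into $\Delta P$, giving $D\leq_GP$ and $|D|\leq|P|$. Together with the previous step this forces $|D|=|P|$, so $P$ is $G$-conjugate to $D$ and is itself a defect group of $b$. The main technical obstacle is the vertex computation for the tensor product in this step; once the containment in $\Delta P$ is established, everything else is a comparison of orders.
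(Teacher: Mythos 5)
Your overall strategy (establish both containments $P\leq_G D$ and $|D|\leq |P|$ and compare orders) matches the paper's, but the first direction has a genuine gap and the second redoes work the paper just cites.

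The claim ``since $M$ is indecomposable with vertex $X$, its Brauer quotient $M(X)$ is nonzero'' is false in general. For example, with $G=C_2\times C_2$, $p=2$, and $M=\Omega(k)=\ker(kG\to k)$, the module $M$ has vertex $G$ but $M(G)=0$: here $M^G$ is one-dimensional, spanned by $\sum_{g\in G}g$, and one checks directly that $\Tr^G_{\langle a\rangle}(1+a)=\sum_{g\in G}g$, so the fixed space is absorbed entirely by relative traces from proper subgroups. Nonvanishing of the Brauer quotient at a vertex is guaranteed for $p$-permutation modules, not for arbitrary indecomposables. What \emph{is} true, and what you should invoke instead, is that $\mathrm{id}_M$ lies in a local point of $X$ on $\End_\O(M)$, i.e.\ $\br_X^{\End_\O(M)}(\mathrm{id}_M)\neq 0$, since $X$ is a vertex. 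The structure homomorphism $\O(G\times H)\to\End_\O(M)$ is a homomorphism of $G\times H$-algebras sending $b\otimes c^\vee$ to $\mathrm{id}_M$, and applying the $X$-Brauer functor to it shows $\br_X^{\O(G\times H)}(b\otimes c^\vee)\neq 0$; your subsequent use of $\alpha_{\O G,\O H}(X)$ then gives $\br_P^{\O G}(b)\neq 0$ as intended. The paper sidesteps this entirely: $M$ is a module over the block $b\otimes c^\vee$ of $\O(G\times H)$, whose defect group is $D\times E$, so up to conjugacy $X\leq D\times E$ by the standard fact that vertices of modules in a block lie in a defect group; no Brauer quotient of $M$ itself is needed.

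For the bound $|D|\leq|P|$, your Mackey-decomposition sketch is sound in spirit but is essentially a reproof of a general vertex bound for tensor products of bimodules. The paper cites Linckelmann's Theorem 5.1.16 for exactly this: every indecomposable direct summand of $M\otimes_{\O Hc}M^*$ has a vertex of order at most $|X|=|P|$; since $\O Gb$ is such a summand with vertex $\Delta D$, this gives $|D|\leq |P|$ directly. Citing that result is the cleaner route, and it avoids having to verify that the twisted-diagonal subgroups coming out of your Mackey analysis have first projection inside $P$ rather than merely inside a $G$-conjugate.
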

\begin{proof}
Denote by $c^\circ$ the image of $c$ in $\O H$ under the anti-automorphism of $\O H$ send any $h\in H$ to $h^{-1}$. Since any vertex of $M$ is contained in a defect group of the block $b\otimes c^\circ$ of $\O(G\times H)$, we may assume that $X\leq D\times E$, where $D$ is a defect group $b$ and $E$ is a defect group of $c$. Let the notation $X^\vee$ denote the subgroup $\{(h,g)\mid(g,h)\in X\}$ of $H\times G$. Then $M^*$ has an $\O X^\vee$-source $V^*$. By \cite[Theorem 1.1]{Bouc:bisets}, if $Y\leq G\times G$ is a vertex of any indecomposable direct summand of $M\otimes_{\O Hc}M^*$, then $|p_1(Y)|\leq |p_1(X)|=|P|$ . Since $M\otimes_{\O Hc} M^*$ has a direct summand isomorphic to $\O Gb$, which has $\Delta D$ as a vertex, it follows that $|P|=|D|$, and hence $P=D$. A similar argument applied to $M^*\otimes_{\O Gb}M$ shows that $Q=E$.
\end{proof}

\begin{proposition}\label{prop: source idempotents}
Keep the notation of Proposition \ref{prop: P and Q are defect groups}. The bimodule $M$ is isomorphic to a direct summand of $$\O Gi\otimes_{\O P}{\rm Ind}_X^{P\times Q}(V)\otimes_{\O Q} j\O H$$ for some source idempotents $i\in (\O Gb)^P$ and $j\in (
\O Hc)^Q$.
\end{proposition}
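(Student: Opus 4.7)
The plan is to begin by identifying $M$ as a direct summand of $\Ind_X^{G\times H}(V)$, to rewrite this induced bimodule so that $\Ind_X^{P\times Q}(V)$ appears sandwiched between $\O G$ and $\O H$, and then to carve out the desired $\O Gi$ and $j\O H$ factors using extra idempotent actions on $T$ combined with the indecomposability of $M$.

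First, a routine manipulation of tensor products, using $\O(G\times H)\cong \O G\otimes_{\O P}\O(P\times Q)\otimes_{\O Q}\O H$ as $\O G$-$\O H$-bimodule, yields an isomorphism of $\O G$-$\O H$-bimodules
$$T:=\O G\otimes_{\O P}\Ind_X^{P\times Q}(V)\otimes_{\O Q}\O H\;\cong\;\Ind_X^{G\times H}(V).$$
Since $M$ has vertex $X$ and source $V$, $M$ is isomorphic to a direct summand of $T$ as $\O(G\times H)$-module, equivalently as $\O G$-$\O H$-bimodule.

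Secondly, observe that $T$ admits two commuting extra actions compatible with its $\O G$-$\O H$-bimodule structure: right multiplication by elements of $(\O G)^P$ on the first tensor factor (well-defined because $(\O G)^P$ centralises $\O P$, so it commutes with the balancing relations defining $\otimes_{\O P}$) and, symmetrically, left multiplication by elements of $(\O H)^Q$ on the last factor. Hence for any idempotents $e\in (\O Gb)^P\subseteq (\O G)^P$ and $f\in (\O Hc)^Q\subseteq (\O H)^Q$, the $\O G$-$\O H$-bimodule
$$fTe\;=\;\O Ge\otimes_{\O P}\Ind_X^{P\times Q}(V)\otimes_{\O Q}f\O H$$
is a direct summand of $T$. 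Taking primitive decompositions $b=\sum_k i_k$ in $(\O Gb)^P$ and $c=\sum_\ell j_\ell$ in $(\O Hc)^Q$ gives
$$bTc\;=\;\bigoplus_{k,\ell}\,j_\ell T i_k\;=\;\bigoplus_{k,\ell}\,\O Gi_k\otimes_{\O P}\Ind_X^{P\times Q}(V)\otimes_{\O Q}j_\ell\O H$$
as $\O G$-$\O H$-bimodules. Since $M=bMc$ is a direct summand of $bTc$ and is indecomposable, $M$ must be a summand of a single term, say
$$M\;\mid\;\O Gi_{k_0}\otimes_{\O P}\Ind_X^{P\times Q}(V)\otimes_{\O Q}j_{\ell_0}\O H$$
for some indices $k_0,\ell_0$.

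The main obstacle is verifying that $i_{k_0}$ and $j_{\ell_0}$ can be chosen to be source idempotents, i.e.\ that $\br_P(i_{k_0})\neq 0$ and $\br_Q(j_{\ell_0})\neq 0$. The idea is a vertex comparison: considered as an $\O(G\times P)$-module via right multiplication on $\O G$, the summand $\O Gi_{k_0}$ of $\O Gb$ has vertex $\Delta P_0$, where $P_0\leq P$ is the defect group of the pointed group $P_{\{i_{k_0}\}}$ on $\O Gb$, and by standard pointed-group theory $P_0=P$ precisely when $\br_P(i_{k_0})\neq 0$. A Mackey-type computation, combined with the $\O(P\times Q)$-vertex $X$ of $\Ind_X^{P\times Q}(V)$ and the symmetric analysis applied to $j_{\ell_0}\O H$, then shows that every indecomposable direct summand of $\O Gi_{k_0}\otimes_{\O P}\Ind_X^{P\times Q}(V)\otimes_{\O Q}j_{\ell_0}\O H$, viewed as $\O(G\times H)$-module, has vertex whose first projection into $G$ is contained in a $G$-conjugate of $P_0$. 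Since $M$ is such a summand with $p_1(X)=P$ by Proposition \ref{prop: P and Q are defect groups}, we must have $P_0=P$, so $i_{k_0}$ is indeed a source idempotent; the argument for $j_{\ell_0}$ is symmetric.
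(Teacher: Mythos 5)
Your proof is correct and follows essentially the same strategy as the paper: rewrite the induced bimodule as $\O Gb\otimes_{\O P}\Ind_X^{P\times Q}(V)\otimes_{\O Q}\O Hc$, use the indecomposability of $M$ to land on a single summand cut out by primitive idempotents of $(\O Gb)^P$ and $(\O Hc)^Q$, and then use a vertex argument to conclude these idempotents have nonzero Brauer image. The only difference is cosmetic: where the paper cites {\cite[Theorem 5.1.16]{Lin18a}} to bound the \emph{order} of a vertex of $M$ by the orders of the vertices of $\O Gi$ and $j\O H$, you phrase the contradiction in terms of the first projection $p_1$ of a vertex being contained in a conjugate of the defect group $P_0$ of the pointed group $P_{\{i_{k_0}\}}$. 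Both rely on the same tensor-product vertex bound and the same pointed-group fact that $\br_P(i)\neq 0$ is equivalent to $\O Gi$ having $\Delta P$ as a vertex.
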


\begin{proof}
Since $M$ has an $\O X$-source $V$, $M$ is isomorphic to a direct summand of 
$$b\Ind_X^{G\times H}(V)c\cong b\Ind_{P\times Q}^{G\times H}\Ind_X^{P\times Q}(V)c\cong \O Gb\otimes_{\O P}{\rm Ind}_X^{P\times Q}(V)\otimes_{\O Q} \O Hc.$$
Since $M$ is indecomposable, there are primitive idempotents $i\in (\O Gb)^P$ and $j\in(\O Hc)^Q$ such that $M$ is isomorphic to a direct summand of 
$$\O Gi\otimes_{\O P}{\rm Ind}_X^{P\times Q}(V)\otimes_{\O Q} j\O H.$$
Now it suffices to show that $\br_P(i)\neq 0$ and $\br_Q(j)\neq 0$. If this is not the case, then the $\O Gb$-$\O P$-bimodule $\O Gi$ has a twisted diagonal vertex of order strictly smaller than $|P|$ or the $\O Q$-$\O Hc$-bimodule $j\O H$ has a twisted diagonal vertex of order strictly smaller than $|Q|$. It follows, by \cite[Theorem 1.1]{Bouc:bisets}, that if $Y\leq G\times H$ is any vertex of any direct summand of $M$, then $|p_1(Y)|<|P|$ or $|p_2(Y)|<|Q|$, a contradiction. 
\end{proof}

\begin{proposition}\label{prop:relation of source algebras}
Keep the notation of Proposition \ref{prop: source idempotents}. Let $A:=i\O Gi$ and $B:=j\O Hj$ be the corresponding source algebras. Assume that $X\cong p_1(X)$. Then the following hold:
\begin{enumerate}[{\rm (i)}]
	\item  There is a primitive idempotent $t\in \End_{\O P\otimes_\O B^{\rm op}}(iMj)$ with $\br_P^{\End_{B^{\rm op}}(iMj)}(t)\neq 0$ such that we have an isomorphism of interior $P$-algebras $$t\End_{B^{\rm op}}(iMj)t\cong e(\End_\O({}_\psi V)\otimes_\O {}_\tau B)e$$ for some group isomorphism $\psi:P\to X$, some surjective group homomorphism $\tau:P\to Q$ and some primitive idempotent $e\in (\End_\O({}_\psi V)\otimes_\O {}_\tau B)^P$ satisfying $\br_P(e)\neq 0$.
	\item As an $\O$-algebra, $t\End_{B^{\rm op}}(iMj)t$ is a symmetric algebra and has an $\O$-basis which is stable under the $P$-conjugation action. 
\end{enumerate}

\end{proposition}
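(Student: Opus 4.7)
The plan is to realize $iMj$, as an $(\O P, B)$-bimodule, as a direct summand of $A \otimes_{\O P} N$, where $N$ is a concrete $(\O P, B)$-bimodule built from the source $V$ and the source algebra $B$, and then to extract the matched pair of primitive local idempotents $t$ and $e$ via a Krull--Schmidt argument applied through the Brauer functor.

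Writing $X = \Delta\varphi$ for the group isomorphism $\varphi : P \to Q$, set $\psi : P \to X$, $u \mapsto (u, \varphi(u))$, and $\tau := \varphi$. First I would verify that the $\O$-linear map
\[
{}_\psi V \otimes_\O {}_\tau B \;\longrightarrow\; \Ind_X^{P \times Q}(V) \otimes_{\O Q} B, \qquad v \otimes b \;\mapsto\; \bigl((1,1)\otimes_{\O X} v\bigr)\otimes_{\O Q} b,
\]
is an $(\O P, B)$-bimodule isomorphism; left $\O P$-equivariance reduces to the identity $((u, \varphi(u)) \otimes v)\otimes \varphi(u) b = ((u,1)\otimes v)\otimes b$, which is obtained by moving $(1, \varphi(u)^{-1})$ across $\otimes_{\O Q}$. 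Call the common object $N$. Proposition \ref{prop: source idempotents} then identifies $iMj$ with an $(A, B)$-bimodule direct summand of $A \otimes_{\O P} N$; moreover, since $i = 1_A \in A^P$, the map $n \mapsto i \otimes n$ exhibits $N$ itself as an $(\O P, B)$-bimodule direct summand of $A \otimes_{\O P} N$, split by the standard $(\O P, \O P)$-bimodule projection $A \twoheadrightarrow \O P \cdot i \cong \O P$ from source-algebra theory. The canonical identification $\End_{B^{\rm op}}(N) \cong \End_\O({}_\psi V) \otimes_\O {}_\tau B$ of interior $P$-algebras gives the right-hand side of the isomorphism in the statement.

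For part (i), put $E := \End_{B^{\rm op}}(iMj)$ and $F := \End_{B^{\rm op}}(N)$. Since $V$ is an $\O X$-source of $M$, the $\O P$-module ${}_\psi V$ is indecomposable with vertex $P$, so $F^P$ admits primitive local idempotents; pick one, say $e$, and set $U := e(N)$. Applying the Brauer functor at $P$ to the ambient interior $P$-algebra $\End_{B^{\rm op}}(A \otimes_{\O P} N)$---using that both $A$ (by source-algebra theory) and $B$ (by source-algebra theory, transported via $\tau$) carry $P$-stable $\O$-bases, so that Lemmas \ref{lemma:Puig 7.10} and \ref{lemma: commutative diagrams} apply---combined with Krull--Schmidt on $(\O P, B)$-bimodule summands, yields that $U$ appears (up to isomorphism) as a direct summand of $iMj$ as well, with a corresponding primitive local idempotent $t \in E^P$. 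The resulting $t(iMj) \cong U \cong e(N)$ as $(\O P, B)$-bimodules gives $tEt \cong \End_{B^{\rm op}}(U) \cong eFe$, and this iso respects the interior $P$-structures because the left $\O P$-action on $U$ is intrinsic.

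For part (ii), the symmetric property of $tEt$ is immediate: $B$ is symmetric (\ref{void:points}(iii)), $iMj$ is finitely generated and projective as right $B$-module so $E$ is symmetric, and corners of symmetric algebras are symmetric. The existence of a $P$-stable $\O$-basis of $tEt$ is the main obstacle. Via $tEt \cong eFe$ from part (i), I would reduce to $eFe$: since ${}_\psi V$ is indecomposable, $\End_\O({}_\psi V)^P$ is local, so $e$ may be chosen in the form $1 \otimes e_2$ with $e_2$ a primitive local idempotent in $B^Q$, yielding $eFe \cong \End_\O({}_\psi V) \otimes_\O (e_2 B e_2)$ in which $e_2 B e_2$ inherits a $P$-stable basis from $B$. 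The delicate point is producing a $P$-stable basis on the $\End_\O({}_\psi V)$-factor---this is essentially the endopermutation property of $V$, which is the conclusion of Theorem \ref{theorem:vertices imply sources} itself---so the argument cannot simply invoke the ambient $F$. The resolution I would pursue is to work instead with the decomposition $E \cong iMj \otimes_B jM^* i \cong A \oplus P_0$ of $E$ as $(A, A)$-bimodule coming from the stable-equivalence identity $M \otimes_{\O Hc} M^* \cong \O Gb \oplus (\text{projective})$: the source algebra $A$ has a $(P \times P)$-stable basis, and the projective $(A, A)$-bimodule $P_0$ is free upon restriction to $\Delta P$, so $E$ carries a $\Delta P$-conjugation-stable $\O$-basis; the task then is to show that the specific local primitive $t \in E^P$ produced in part (i) can be chosen compatibly with this basis so that $tEt$ retains the permutation-basis property. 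This is the step I expect to require the most care.
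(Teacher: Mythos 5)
Your proposal diverges from the paper's proof in part (i), and the divergence introduces a genuine gap. The paper does not restrict $\psi$ and $\tau$ at the outset; instead, after passing to $A\otimes_{\O P}\Ind_X^{P\times Q}(V)\otimes_{\O Q}B$, it decomposes $A$ as an $(\O P,\O P)$-bimodule, locates (via Krull--Schmidt) the indecomposable summand $W$ of $A$ that carries the relevant piece of $iMj$, and uses $\br_P(e)\neq 0$ to force $W$ to have vertex $P$, whence $W\cong(\O P)_\rho$ for some $\rho\in\Aut(P)$ by Lemma~\ref{lemma:direct summand of OG}. The automorphism $\rho$ then twists the final $\psi$ and $\tau$. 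You instead fix $\psi:u\mapsto(u,\varphi(u))$ and $\tau=\varphi$ from the start (i.e.\ silently take $\rho=\mathrm{id}$), observe that $N$ itself is a summand of $A\otimes_{\O P}N$, and assert that a Krull--Schmidt argument makes the chosen local summand $U=eN$ of $N$ appear as a summand of $iMj$. This is exactly the step that can fail: $iMj$ is indecomposable as an $(A,B)$-bimodule, and as an $(\O P,B)$-bimodule it is a summand of some $W\otimes_{\O P}N$ with $W$ \emph{not necessarily} the trivial summand $\O P$ of $A$. In that case the local pieces of $iMj$ match local pieces of the $\rho$-twisted module ${}_{\rho^{-1}}N$, not of $N$, so the interior $P$-algebra isomorphism you want holds for the twisted $\psi,\tau$, not the ones you fixed. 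Your sketch gives no mechanism that forces $W=\O P$.

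For part (ii), your symmetric-algebra argument ($B$ symmetric, $iMj$ projective as right $B$-module, so $E=\End_{B^{\rm op}}(iMj)$ is symmetric, and corners of symmetric algebras are symmetric) is correct and is a mild simplification of the paper's route. For the $P$-stable basis, you correctly discard your first attempt (the factorization $e=1\otimes e_2$ is not justified, and it would circularly presuppose that ${}_\psi V$ is endopermutation) and land on the paper's decomposition $E\cong A\oplus iU_1i$, but you stop short and flag the last step as the one ``requiring the most care.'' It does not: since $t\in E^{\Delta P}$ commutes with the image of $P$, the map $e\mapsto tet$ is a $(P\times P)$-equivariant projection, so $tEt$ is an $\O(P\times P)$-module direct summand of $E$; both $A$ (source algebra) and $iU_1i$ (projective $A$-$A$-bimodule, hence a summand of a direct sum of copies of $A\otimes_\O A$) are permutation $\O(P\times P)$-modules, so $E$ is, and a direct summand of a permutation module for a $p$-group over $\O$ is again a permutation module. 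No compatible choice of $t$ is needed — any $t$ works. So part (ii) is recoverable with a short additional observation, but part (i) needs the $W$-extraction step you omitted.
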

\begin{proof}
(i). Since $M$ induces a stable Morita equivalence of Morita type between $\O Gb$ and $\O Hc$, the $A$-$B$-bimodule $iMj$ induces a stable equivalence of Morita type between $A$ and $B$. By \cite[Corollary 2.12.4]{Lin18a}, we have
\begin{equation}\label{equation:EndiMj iso A plus iU1i}
\End_{B^{\rm op}}(iMj)\cong iMj\otimes_B (iMj)^*\cong A\oplus iU_1i
\end{equation}
 as $A$-$A$-bimodules, where $iU_1i$ is a projective $A\otimes_\O A^{\rm op}$-module (recall that the symbol $U_1$ comes from Notation \ref{notation:stable equ of Morita type}). By Proposition \ref{prop: source idempotents}, the $A$-$B$-bimodule $iMj$ is isomorphic to a direct summand of $A\otimes_{\O P}\Ind_{X}^{P\times Q}(V)\otimes_{\O Q}B.$ Let $e'\in \End_{A\otimes_\O B^{\rm op}} (A\otimes_{\O P}\Ind_{X}^{P\times Q}(V)\otimes_{\O Q}B)$ be a projection of $A\otimes_{\O P}\Ind_{X}^{P\times Q}(V)\otimes_{\O Q}B$ onto $iMj$. Thus
\begin{equation}\label{equation:End(iMj)}
\End_{B^{\rm op}}(iMj)\cong e'(\End_{B^{\rm op}} (A\otimes_{\O P}\Ind_{X}^{P\times Q}(V)\otimes_{\O Q}B))e'
\end{equation}
as $\O$-algebras and as $A$-$A$-bimodules. Through the group homomorphism $P\to A^\times$, we can regard both sides of (\ref{equation:End(iMj)}) as interior $P$-algebras.
Since $A(P)\neq 0$, by (\ref{equation:EndiMj iso A plus iU1i}) we have 	$\End_{B^{\rm op}}(iMj)(P)\neq 0$,
 hence we can choose a primitive idempotent $t\in \End_{\O P\otimes_\O B^{\rm op}}(iMj)$ with $\br_P(t)\neq 0$ and a primitive idempotent $e''\in \End_{\O P\otimes_\O B^{\rm op}} (A\otimes_{\O P}\Ind_{X}^{P\times Q}(V)\otimes_{\O Q}B)$ with $\br_P(e'')\neq 0$, such that 
$$t\End_{B^{\rm op}}(iMj)t\cong e''(\End_{B^{\rm op}} (A\otimes_{\O P}\Ind_{X}^{P\times Q}(V)\otimes_{\O Q}B))e''.$$
 
So there is an indecomposable summand $W$ of $A$ as an $(\O P,\O P)$-bimodule and an $\End_{\O P\otimes_\O B^{\rm op}}(A\otimes_{\O P}\Ind_{X}^{P\times Q}(V)\otimes_{\O Q}B)$-conjugate $e$ of $e''$, such that 
$$ e''(\End_{B^{\rm op}}(A\otimes_{\O P}\Ind_{X}^{P\times Q}(V)\otimes_{\O Q}B))e''\cong e(\End_{B^{\rm op}}(W\otimes_{\O P}\Ind_{X}^{P\times Q}(V)\otimes_{\O Q}B))e$$
as interior $P$-algebras. By Lemma \ref{lemma:direct summand of OG} the order of a vertex of $W$ has order at most $|P|$. The condition $\br_P(e)\neq 0$ forces $W$ to have a vertex of order $|P|$; see \cite[Theorem 2.6.2 (i),(v)]{Lin18a}.

Denote by $\varphi$ the surjective group homomorphism $p_2\circ p_1^{-1}:P\to Q$ (recall that $p_1$ denotes the projection $X\to P$, and $p_2$ denotes the projection $X\to Q$). One checks that we have an $(\O P,B)$-bimodule isomorphism ${\rm Ind}_{X}^{P\times Q}(V)\otimes_{\O Q} B\cong {}_{p_1^{-1}}(V)\otimes_\O {}_\varphi B$ sending $((x,y)\otimes v)\otimes m$ to $p_1^{-1}(x)v\otimes y^{-1}\varphi(x)m$, with inverse sending $v\otimes m$ to $((1,1)\otimes v)\otimes m$, where $x\in P$, $y\in Q$, $v\in V$ and $m\in B$. Hence we have an isomorphism 
$$e(\End_{B^{\rm op}}(W\otimes_{\O P}\Ind_{X}^{P\times Q}(V)\otimes_{\O Q}B))e\cong e(\End_{B^{\rm op}}(W\otimes_{\O P} ({}_{p_1^{-1}}V\otimes_\O {}_\varphi B)))e$$
of interior $P$-algebras.
By Lemma \ref{lemma:direct summand of OG}, $W\cong (\O P)_{\rho}$ for some $\rho\in \Aut(P)$. So we have an isomorphism
$$e(\End_{B^{\rm op}}(W\otimes_{\O P} ({}_{p_1^{-1}}V\otimes_\O {}_{\varphi}B)))e\cong e(\End_{B^{\rm op}}({}_{\rho^{-1}\circ p_1^{-1}}V\otimes_\O {}_{\rho^{-1}\circ\varphi} B))e$$
of interior $P$-algebras. Write $\psi=\rho^{-1}\circ p_1^{-1}$ and $\tau=\rho^{-1}\circ\varphi$.
It is easy to see that we have also an isomorphism 
$$e(\End_{B^{\rm op}}({}_\psi V\otimes_\O {}_\tau B))e\cong e(\End_\O({}_\psi V)\otimes_\O {}_\tau B)e$$
of interior $P$-algebras, proving (i).

(ii) Since matrix algebras are symmetric, the $\O$-algebra $\End_\O({}_\psi V)$ is symmetric. Since a source algebra is symmetric, the $\O$-algebra ${}_\tau B$ is symmetric. Then by \cite[Theorems 2.11.12 (i) and 2.11.11]{Lin18a}, the $\O$-algebra $e(\End_\O({}_\psi V)\otimes_\O {}_\tau B)e$ is symmetric. Hence by statement (i), $t\End_{B^{\rm op}}(iMj)t$ is symmetric. Consider the $A$-$A$-bimodule isomorphism (\ref{equation:EndiMj iso A plus iU1i}). Since $iU_1i$ is a projective $A$-$A$-bimodule, it has a $(P\times P)$-stable $\O$-basis under left and right multiplication. Hence $A\oplus iU_1i$ has a $(P\times P)$-stable $\O$-basis. It follows that $t\End_{B^{\rm op}}(V)t$ has a $(P\times P)$-stable, and hence a $\Delta P$-stable $\O$-basis. 
\end{proof}

\begin{theorem}[{\cite[Theorem 7.2]{Puig1999}}]\label{theorem: Puig 7.2}
	Assume that $k$ is algebraically closed. Let $P$ be a finite $p$-group, and let $A$ and $B$ be finitely generated $\O$-free interior $P$-algebras such that $1_A$ and $1_B$ are primitive in $A^P$ and $B^P$ respectively. Let $V$ be a finitely generated indecomposable $\O$-free $\O P$-module and write $S:=\End_\O(V)$. Assume that there is an interior $P$-algebra embedding $g:A\to S\otimes_\O B$. If $P$ stabilises by conjugation $\O$-bases of $A$ and $B$, $A$ admits a nondegenerate symmetric $\O$-linear form $\mu:A\to \O$ and $A(P)\neq 0$, then $V$ is an endopermutation $\O P$-module. 
\end{theorem}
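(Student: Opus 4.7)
The goal is to transport the structure of $A$ across the embedding $g$ so as to constrain $S=\End_\O(V)$ on every Brauer quotient, and then to conclude that $S$ is a permutation $\O P$-module, which is precisely what it means for $V$ to be endopermutation.

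First I would set up the Brauer picture uniformly. For each subgroup $Q\leq P$, Lemma~\ref{lemma:embbeding}(ii) turns $g$ into an embedding $g(Q)\colon A(Q)\hookrightarrow (S\otimes_\O B)(Q)$. Because $B$ admits a $P$-stable $\O$-basis, Lemma~\ref{lemma:Puig 7.10} identifies $(S\otimes_\O B)(Q)$ with $S(Q)\otimes_k B(Q)$ via $\alpha_{S,B}(Q)$, and so $g(Q)$ becomes an embedding $A(Q)\hookrightarrow S(Q)\otimes_k B(Q)$. By Lemma~\ref{lemma: linear form Puig} the form $\mu$ descends to a nondegenerate symmetric $k$-linear form $\mu(Q)$ on every nonzero $A(Q)$, making $A(Q)$ a symmetric $k$-algebra, and the commutative diagrams of Lemma~\ref{lemma: commutative diagrams} will provide the compatibilities linking these Brauer quotients across the poset of $p$-subgroups of $P$.

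I would then analyse the top of the lattice. Since $1_A$ is primitive in $A^P$ and $A(P)\neq 0$, Lemma~\ref{lemma:embbeding}(iii) shows that $1_{A(P)}=\br_P^A(1_A)$ is a primitive idempotent of $A(P)$, so $A(P)$ is a local symmetric $k$-algebra. Its image $e:=g(P)(1_{A(P)})$ is a primitive idempotent of $S(P)\otimes_k B(P)$ with $g(P)(A(P))=e(S(P)\otimes B(P))e$. Because $k$ is algebraically closed, everything is split, and Proposition~\ref{prop:bijection between points} yields a bijection $\P(S(P))\times\P(B(P))\xrightarrow{\sim}\P(S(P)\otimes B(P))$; after conjugating $e$, I may assume $e=i\otimes j$ for primitive idempotents $i\in S(P)$ and $j\in B(P)$, so that
\[
A(P)\;\cong\;iS(P)i\otimes_k jB(P)j.
\]
The left side being local and symmetric forces both tensor factors to be local symmetric $k$-algebras; in particular $S(P)\neq 0$.

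The main work is then to propagate this to every $p$-subgroup $Q\leq P$, by induction on $[P:Q]$. Rather than applying the previous analysis to $A$, one applies it to the Brauer quotients $A(R)$ at each intermediate $p$-subgroup $R$, using the transitivity diagram of Lemma~\ref{lemma: commutative diagrams}(iii) to relate $A(Q)$ with $A(R)(Q/R)$. Since $1_A$ need no longer be primitive in $A^Q$, one must work with local pointed groups $Q_\delta\leq P_{\{1_A\}}$ on $A$, pairing the symmetric form $\mu(Q)$ with Proposition~\ref{prop:bijection between points} to pin down, for each such $Q_\delta$, a primitive idempotent $i_Q\in S(Q)$ and the matrix-algebra summand of $S(Q)/J(S(Q))$ that it selects; Lemma~\ref{lemma: commutative diagrams} then guarantees these local pictures are coherent along chains of subgroups. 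Assembling this data for all $Q$ shows that each $S(Q)$ is either zero or a split matrix algebra over $k$ and that the compatible primitive idempotents lift through the Brauer maps, which is exactly the Dade $P$-algebra condition: equivalently, $S$ has a $P$-stable $\O$-basis, i.e., $V$ is endopermutation. The main obstacle is this third step: the descent to subgroups $Q$ where $1_A$ is no longer primitive requires careful bookkeeping with local points on $A$, and one must ensure that the symmetric form $\mu(Q)$ and the embedding $g(Q)$ cooperate so that the tensor decomposition at $P$ truly restricts to analogous decompositions at every $Q$.
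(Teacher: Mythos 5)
Your setup of the Brauer picture and the analysis at the top subgroup $P$ are correct in spirit, but the heart of the proof is missing. Two gaps stand out. First, the inductive step you sketch --- ``pairing the symmetric form $\mu(Q)$ with Proposition~\ref{prop:bijection between points} to pin down primitive idempotents, with Lemma~\ref{lemma: commutative diagrams} guaranteeing coherence along chains'' --- is not an argument. The paper's proof of the corresponding statement (Lemma~\ref{lemma: Puig 7.22}: every $Q\leq P$ has a unique local point on $S$) hinges on a device you have not introduced at all: a \emph{reverse} embedding $g':B\to S^{\rm op}\otimes_\O A$ of interior $P$-algebras (Lemma~\ref{lemma:Puig 7.20}), itself built from an embedding $f:\O\to S^{\rm op}\otimes_\O S$ (Lemma~\ref{lemma: Puig Lemma 7.18}) and the uniqueness of the local point of $P$ on $S^{\rm op}\otimes_\O S\otimes_\O B$ (Lemma~\ref{lemma: Puig lemma 7.19}). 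The symmetric form $\mu(Q)$ does enter, but only through $g'$: it is used to prove that the composite $\overline{g(Q)}:A(Q)\to\overline{S(Q)}\otimes_k B(Q)$ is injective (Lemma~\ref{lemma: Puig 7.21}), and the uniqueness of local points on $S$ at $Q$ is then extracted via a covering-homomorphism argument for the map $S^Q\to\overline{S(Q)}$ of $\bar N_P(Q)$-algebras. None of this is recoverable from ``coherence along chains.''

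Second, your concluding inference --- ``each $S(Q)$ is either zero or a split matrix algebra over $k$\,\dots\ which is exactly the Dade $P$-algebra condition: equivalently, $S$ has a $P$-stable $\O$-basis'' --- is not a valid implication. Knowing that every $S(Q)$ is a matrix algebra (equivalently, a unique local point at each $Q$) is necessary but not sufficient for $S$ to carry a $P$-stable $\O$-basis; the paper does not deduce the basis this way. It instead uses $g'$ in a counting argument to upgrade Lemma~\ref{lemma: Puig 7.22} to a bijection $\LP_A(Q)\cong\LP_{S\otimes_\O B}(Q)$ for all $Q$ (Lemma~\ref{lemma: Puig Lemma 7.23}), and then invokes Puig's theorem on $P$-stable primitive idempotent decompositions to decompose $S\otimes_\O B=\bigoplus_{i,j}i(S\otimes_\O B)j$ and, summand by summand, transfer the $P$-stable basis of $A$ through $g$ and the local-point bijection. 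This transfer is the actual endgame, and your proposal does not supply it or a substitute for it.
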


A more detailed proof of this theorem using simplified terminology is given in Section \ref{section:proof of Puig's Theorem 7.2}.

\begin{proof}[Proof of Theorem \ref{theorem:vertices imply sources}.] 
It remains to prove (ii)$\Rightarrow$(iv). Suppose that (ii) holds. Then we have an isomorphism of interior $P$-algebras 
	$$t\End_{B^{\rm op}}(iMj)t\cong e(\End_\O({}_\psi V)\otimes_\O {}_\tau B)e$$ 
as showed in Proposition \ref{prop:relation of source algebras}. In other words, we have an embedding 
	$$t\End_{B^{\rm op}}(iMj)t\to \End_\O({}_\psi V)\otimes_\O {}_\tau B$$ 
of interior $P$-algebras. By Proposition \ref{prop:relation of source algebras} (ii),  as an $\O$-algebra, $t\End_{B^{\rm op}}(iMj)t$ is a symmetric algebra and has an $\O$-basis which is stable under the $P$-conjugation action. By the choice of $t$, we have $(t\End_{B^{\rm op}}(iMj)t)(P)\neq 0$. Since $B$ is a source $Q$-algebra, ${}_\tau B$ has a $P$-stable $\O$-basis as well. Since the homomorphism $\tau:P\to Q$ is surjective and since $1_B=j$ is primitive in $B^Q$, $1_B$ is primitive in $({}_\tau B)^P$ as well. Now by Theorem \ref{theorem: Puig 7.2}, ${}_\psi V$ is an endopermutation $\O P$-module. Since $\psi$ is a group isomorphism, $V$ is an endopermutation $\O X$-module.
\end{proof}

\section{On vertices and sources, and proof of Theorem \ref{theorem:descent to finite fields}}\label{section:Descending to finite fields}


For the proofs of next two lemmas, we are grateful to Prof. Conghui Li for recommending \cite[Theorem 12.6]{AF} to us.

\begin{lemma}[a generalisation of {\cite[Chapter 3, Lemma 4.14]{Feit}}]\label{lemma:vertex is invariant under scalar extensions}
	Let $\O'$ be a complete discrete valuation ring containing $\O$ which is free as an $\O$-module (possibly having infinite $\O$-rank). Let $G$ be a finite group and $M$ a finitely generated indecomposable $\O G$-module. Let $P$ be a vertex of $M$. Then $P$ is a vertex of every indecomposable direct summand of the $\O'G$-module $\O'\otimes_\O M$. 
\end{lemma}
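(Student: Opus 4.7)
The approach is to reduce to the classical finite-rank case (Feit's Chapter 3, Lemma 4.14) via a descent of the projection idempotent cutting out $M'$. For the upper bound on the vertex, I would first note that $M$ is a direct summand of $\Ind_P^G N$ for some $\O P$-source $N$ of $M$, so $\O' \otimes_\O M$ is a summand of $\Ind_P^G(\O' \otimes_\O N)$; hence any indecomposable summand $M'$ of $\O' \otimes_\O M$ has a vertex contained in a $G$-conjugate of $P$, and after conjugation we may assume that a vertex $Q$ of $M'$ satisfies $Q \leq P$.

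For the reverse inclusion I would set up a Higman-trace framework using flat base change. Since $M$ is finitely generated over the Noetherian ring $\O G$ and $\O'$ is $\O$-flat, we have $\End_{\O' H}(\O' \otimes_\O M) \cong \O' \otimes_\O \End_{\O H}(M)$ for every subgroup $H \leq G$, compatibly with relative trace maps. Writing $e \in \O' \otimes_\O \End_{\O G}(M)$ for the projection onto $M'$ and $T_Q := \Tr_Q^G(\End_{\O Q}(M)) \subseteq \End_{\O G}(M)$, Higman's criterion and the $G$-invariance of $e$ translate the statement ``$M'$ is $Q$-projective'' into ``$e \in \O' \otimes_\O T_Q$''.

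The key step is descending $e$ to a finite-rank subextension. The residue algebra $D := \End_{\O G}(M)/J(\End_{\O G}(M))$ is a finite-dimensional $k$-division algebra (since $\End_{\O G}(M)$ is local because $M$ is indecomposable); the reduction of $e$ modulo $J(\O')$ lies in $k' \otimes_k D$, where $k' = \O'/J(\O')$, and the finitely many primitive idempotents of $k' \otimes_k D$ already lie in $k_1 \otimes_k D$ for some finite subextension $k \subseteq k_1 \subseteq k'$. Using the unramified-lifting / Witt-vector construction for complete DVRs, I would choose a complete DVR $\O_1 \subseteq \O'$ which is finite and free over $\O$ with residue field containing $k_1$; by idempotent lifting in the complete local $\O$-algebra $\O_1 \otimes_\O \End_{\O G}(M)$, the primitive idempotent of $k_1 \otimes_k D$ lifts to some $e_1 \in \O_1 \otimes_\O \End_{\O G}(M)$ which, viewed in $\O' \otimes_\O \End_{\O G}(M)$, is conjugate to $e$. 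Replacing $M'$ by this isomorphic summand, I may assume $e \in \O_1 \otimes_\O \End_{\O G}(M)$.

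Set $M_1 := \O_1 \otimes_\O M$ and $M_1' := e M_1$. By Feit's finite-rank lemma applied to the extension $\O_1/\O$, every indecomposable $\O_1 G$-summand of $M_1$, and in particular of $M_1'$, has vertex $P$. If $M'$ had vertex $Q$ strictly smaller than $P$ then $e \in \O' \otimes_\O T_Q$; combined with $e \in \O_1 \otimes_\O \End_{\O G}(M)$ and the identity $(\O_1 \otimes_\O \End_{\O G}(M)) \cap (\O' \otimes_\O T_Q) = \O_1 \otimes_\O T_Q$ (a standard consequence of flat base change applied to the short exact sequence $0 \to T_Q \to \End_{\O G}(M) \to \End_{\O G}(M)/T_Q \to 0$), this would force $e \in \O_1 \otimes_\O T_Q$, contradicting that every indecomposable summand of $M_1'$ has vertex $P$. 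The main obstacle is this descent step: arguing that even though the $\O'$-coordinates of $e$ may be transcendental over $\O$, the primitive idempotents of $k' \otimes_k D$ descend to a finite subextension, which together with Hensel's lemma lets us replace $e$ by an idempotent defined over a finite extension $\O_1$ of $\O$.
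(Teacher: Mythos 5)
Your approach is genuinely different from the paper's. The paper restricts the summand $U'$ back down to $\O G$: since $\O'$ is $\O$-free, $\Res_{\O G}^{\O' G}(U')$ is a direct summand of $\bigoplus_{i\in I} M$ for an $\O$-basis $I$ of $\O'$; it then extracts a copy of $M$ as a direct summand of $\Res_{\O G}^{\O' G}(U')$ (via Lemma~\ref{lemma: finite direct sum} and the Krull--Schmidt theorem), so that $Q$-projectivity of $U'$ for some $Q<P$ would force $M$ to be relatively $Q$-projective, a contradiction. Your argument instead goes through Higman's criterion for the idempotent $e$ cutting out $M'$ and then attempts to descend $e$ to a finite free subextension $\O_1$ so as to invoke Feit's finite-rank lemma. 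Your upper-bound paragraph and the translation ``$M'$ is $Q$-projective iff $e\in\O'\otimes_\O T_Q$'' are correct, and the flat base-change intersection $(\O_1\otimes_\O\End_{\O G}(M))\cap(\O'\otimes_\O T_Q)=\O_1\otimes_\O T_Q$ is fine.

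However, the descent step has a real gap and is also unnecessary. The assertion that the primitive idempotents of $k'\otimes_k D$ already lie in $k_1\otimes_k D$ for some \emph{finite} subextension $k\subseteq k_1\subseteq k'$ is not obvious and is not proved: the coefficients of such an idempotent a priori only generate a finitely generated (possibly transcendental) subfield of $k'$, and showing they in fact lie in a finite algebraic subextension requires a Galois-descent argument, with separability caveats; similarly, producing a complete DVR $\O_1\subseteq\O'$ that is finite free over $\O$ with the prescribed residue field needs more care than a passing reference to Witt vectors. More importantly, none of this is needed. Once you know $e\in\O'\otimes_\O T_Q$ you can finish directly: $E:=\End_{\O G}(M)$ is local (since $M$ is indecomposable and finitely generated over the complete local ring $\O$), and $T_Q$ is a proper two-sided ideal of $E$ because $M$ is not relatively $Q$-projective for $Q<P$; hence $T_Q\subseteq J(E)$. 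By flatness $\O'\otimes_\O T_Q\subseteq\O'\otimes_\O J(E)$, and the latter lies in $J(\O'\otimes_\O E)$ because $\O'\otimes_\O E$ is a finite $\O'$-algebra whose reduction modulo $J(\O')$ is $k'\otimes_k(E/J(\O)E)$, in which the image of $J(E)$ generates a nilpotent ideal. Thus $e$ would be a nonzero idempotent in $J(\O'\otimes_\O E)$, which is impossible. This short radical argument replaces your entire descent to $\O_1$ and the appeal to Feit's lemma, and it eliminates the gap.
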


\begin{proof}
	Let $U'$ be an indecomposable direct summand of the $\O'G$-module $\O'\otimes_\O M$. Since $M$ is isomorphic to a direct summand of $\Ind_P^G{\Res}_P^G(M)$ (see e.g. \cite[Theorem 2.6.2]{Lin18a}), $U'$ is isomorphic to a direct summand of $\Ind_P^G{\Res}_P^G(\O'\otimes_\O M)$. So $U'$ is relatively $P$-projective. If $P$ is not a vertex of $U'$, then there is a proper subgroup $Q$ of $P$ which is a vertex of $U'$. Let $I$ be an $\O$-basis of $\O'$ (which is possibly infinite). Since $U'$ is isomorphic to a direct summand of $\O'\otimes_\O M$, ${\rm Res}_{\O G}^{\O'G}(U')$ is isomorphic to a direct summand of $\Res_{\O G}^{\O'G}(\O'\otimes_\O M)\cong \oplus_{i\in I} M$. Since $\End_{\O G}(M)$ is a local algebra (see e.g. \cite[Corollary 4.4.7]{Lin18a}), by \cite[Theorem 12.6]{AF} $M$ is isomorphic to a direct summand of ${\rm Res}_{\O G}^{\O'G}(U')$. Since $Q$ is a vertex of $U'$, $U'$ is isomorphic to a direct summand of $\Ind_Q^G{\Res}_Q^G(U')$. Hence $M$ is isomorphic to a direct summand of $\Ind_Q^G{\Res}_Q^G({\rm Res}_{\O G}^{\O'G}(U'))$, which implies that $M$ is relatively $Q$-projective, a contradiction.
\end{proof}

\begin{lemma}[a generalisation of {\cite[Lemma 5.2]{Kessar_Linckelmann}}]\label{lemma:descent of sources}
	Let $\O'$ be a complete discrete valuation ring containing $\O$ which is free as an $\O$-module (possibly having infinite $\O$-rank). Let $G$ be a finite group and $M$ a finitely generated indecomposable $\O G$-module. Let $P$ be a vertex of $M$. Let $U'$ be an indecomposable direct summand of the $\O'G$-module $\O'\otimes_\O M$ and $V'$ an $\O'P$-source of $U'$. Suppose that $V'\cong \O'\otimes_\O V$ for some $\O P$-module $V$. Then $V$ is an $\O P$-source of $M$. Moreover, every indecomposable direct sumand of $\O'\otimes_\O M$ has $V'$ as a source.
\end{lemma}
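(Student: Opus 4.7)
The plan is to combine Lemma~\ref{lemma:vertex is invariant under scalar extensions} (which already gives that every indecomposable direct summand of $\O'\otimes_\O M$ has vertex $P$) with faithful-flat-style descent arguments using Lemma~\ref{lemma: finite direct sum} and the Krull--Schmidt theorem. First I would verify that $V$ is a finitely generated indecomposable $\O P$-module: finite generation is immediate because $\O'\otimes_\O V \cong V'$ is finitely generated and $\O'$ is $\O$-free, and indecomposability follows because any nontrivial decomposition $V = V_1 \oplus V_2$ would induce a nontrivial decomposition $V' = (\O'\otimes_\O V_1) \oplus (\O'\otimes_\O V_2)$ (each summand nonzero since $\O'$ is $\O$-free), contradicting the indecomposability of the source $V'$.

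Next I would check the two defining properties of a source separately, namely $V \mid \Res_P^G M$ and $M \mid \Ind_P^G V$. From $V' \mid \Res_P^G U'$ and $U' \mid \O'\otimes_\O M$ one gets $V' \mid \O'\otimes_\O \Res_P^G M$ as $\O' P$-modules; fixing an $\O$-basis $I$ of $\O'$ and restricting to $\O P$-modules turns this into $V^{(|I|)} \mid (\Res_P^G M)^{(|I|)}$, and since $V$ is indecomposable it is a summand of $V^{(|I|)}$, hence of $(\Res_P^G M)^{(|I|)}$; Lemma~\ref{lemma: finite direct sum} together with Krull--Schmidt then yields $V \mid \Res_P^G M$. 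In the other direction, the proof of Lemma~\ref{lemma:vertex is invariant under scalar extensions} shows that $M \mid \Res_{\O G}^{\O'G}(U')$, and since $U' \mid \Ind_P^G V' = \Ind_P^G(\O'\otimes_\O V)$, restricting scalars from $\O' G$ to $\O G$ gives $M \mid (\Ind_P^G V)^{(|I|)}$; another application of Lemma~\ref{lemma: finite direct sum} and Krull--Schmidt produces $M \mid \Ind_P^G V$. This establishes that $V$ is an $\O P$-source of $M$.

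For the second assertion, let $U''$ be any indecomposable direct summand of $\O'\otimes_\O M$; by Lemma~\ref{lemma:vertex is invariant under scalar extensions} it has vertex $P$, and $U'' \mid \O'\otimes_\O \Ind_P^G V \cong \Ind_P^G V'$. Applying Mackey's formula to $\Res_P^G \Ind_P^G V'$, the indecomposable summands with vertex $P$ are precisely the $N_G(P)$-conjugates ${}^x V'$ of $V'$ (the remaining summands, indexed by $x \notin N_G(P)$, are induced from $P \cap {}^x P < P$ and hence have strictly smaller vertex). A source of $U''$ must appear as an indecomposable summand of $\Res_P^G U''$ with vertex $P$, so must be some ${}^x V'$ with $x \in N_G(P)$; since all sources of $U''$ are $N_G(P)$-conjugate, $V'$ itself is a source of $U''$.

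The main obstacle I expect is the bookkeeping of the index set $I$: after extending scalars, decompositions involve $|I|$-indexed direct sums with $I$ possibly infinite, so Lemma~\ref{lemma: finite direct sum} must be invoked at each descent step to reduce to a finite index set before Krull--Schmidt can be applied. A small additional point in the last paragraph is that the conjugation action of $x \in N_G(P)$ on the $\O'G$-module $U''$ induces an isomorphism $\Res_P^G U'' \cong {}^x \Res_P^G U''$, so the set of isomorphism classes of indecomposable summands of $\Res_P^G U''$ is stable under ${}^y$-twisting for $y \in N_G(P)$; this upgrades ``some $N_G(P)$-conjugate of $V'$ is a summand of $\Res_P^G U''$'' to ``$V'$ itself is a summand''.
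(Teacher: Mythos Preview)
Your proof is correct and follows the same overall strategy as the paper's: descend along $\O \subseteq \O'$ by restricting scalars, using Lemma~\ref{lemma: finite direct sum} to pass to finite direct sums, and then applying Krull--Schmidt. The differences are in how the pieces are assembled rather than in the underlying ideas.

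For the first statement, the paper shows only $V \mid \Res_P^G M$ by descent (exactly as you do), and then invokes Lemma~\ref{lemma:vertex is invariant under scalar extensions} applied to the $\O P$-module $V$ to conclude that $V$ has vertex $P$; this already makes $V$ a source. You instead establish $M \mid \Ind_P^G V$ by a separate descent, which also forces $V$ to have vertex $P$ (otherwise $M$ would be relatively projective with respect to a proper subgroup). Both routes are equally short; the paper's has the slight advantage of reusing a lemma already in hand, while yours makes the condition $M \mid \Ind_P^G V$ explicit, which is exactly what is needed for the second statement. You also explicitly verify that $V$ is indecomposable, a point the paper uses tacitly when it writes $\Res_{\O P}^{\O'P}(V') \cong V^{n_2}$.

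For the second statement, the paper simply notes that $M \mid \Ind_P^G V$ implies $\O'\otimes_\O M \mid \Ind_P^G V'$ and declares the conclusion; your Mackey argument spells out what that sentence is hiding. Your final remark about $N_G(P)$-twisting of $\Res_P^G U''$ is a correct way to pass from ``some conjugate of $V'$'' to ``$V'$ itself'', though it is quicker to observe that sources are only defined up to $N_G(P)$-conjugacy anyway.
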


\begin{proof}
	Let $I$ be an $\O$-basis of $\O'$ (which is possibly infinite). Since $U'$ is isomorphic to a direct summand of $\O'\otimes_\O M$, ${\rm Res}_{\O G}^{\O'G}(U')$ is isomorphic to a direct summand of $\Res_{\O G}^{\O'G}(\O'\otimes_\O M)\cong \oplus_{i\in I} M$. Since $\End_{\O G}(M)$ is a local algebra, by \cite[Theorem 12.6]{AF} any indecomposable direct summand of ${\rm Res}_{\O G}^{\O'G}(U')$ is isomorphic to $M$. By assumption, $V'\cong \O'\otimes_\O V$, hence ${\rm Res}_{\O P}^{\O'P}(V')$ is isomorphic to $\Res_{\O P}^{\O'P}(\O'\otimes_\O V)\cong \oplus_{i\in I} V$. Since $\End_{\O P}(V)$ is a local algebra, again by \cite[Theorem 12.6]{AF} any indecomposable direct summand of ${\rm Res}_{\O P}^{\O'P}(U')$ is isomorphic to $V$. Since $V'$ is isomorphic to a direct summand of ${\rm Res}_P^G(U')$, $\Res_{\O P}^{\O'P}(V')$ is isomorphic to a direct summand of $\Res_P^G(\Res_{\O G}^{\O'G}(U'))$. It follows that $V$ is isomorphic to a direct summand of a direct sum of (possibly infinite) copies of $\Res_P^G(M)$. Using \cite[Theorem 12.6]{AF} again, we see that $V$ is isomorphic to a direct summand of $\Res_P^G(M)$. By Lemma \ref{lemma:vertex is invariant under scalar extensions}, $P$ is a vertex of $V$, and therefore, $V$ is a source of $M$, proving the first statement. Since $M$ is isomorphic to a direct summand of $\Ind_P^G(V)$, $\O'\otimes_\O M$ is isomorphic to a direct summand of $\Ind_P^G(V')$. This implies the second statement.
\end{proof}

\begin{lemma}[a slight generalisation of {\cite[Lemma 6.4]{Kessar_Linckelmann}}]\label{lemma:Kessar Linckelmann lemma 6.4}
	Let $G$ be a finite group. Let $k'$ be an extension of $k$. Let $b$ be a block of $kG$ and $b'$ a block $k'G$ such that $bb'\neq0$.  Write $b'=\sum_{g\in G} \alpha_g g$, where $\alpha_g\in k'$ and let $k[b']$ be the smallest subfield of $k'$ containing $k$ and the coefficients $\{\alpha_g\mid g\in G\}$. The following hold:
	\begin{enumerate}[{\rm (i)}]
		\item Then $k[b']$ is a finite Galois extension of $k$.
		\item The block decomposition of $b$ in $k'G$ is $b=\sum_{\sigma\in \Gal(k[b']/k)}\sigma(b')$, where $\sigma(b')=\sum_{g\in G} \sigma(\alpha_g) g$.
		\item A $p$-subgroup $P$ of $G$ is a defect group of $b$ if and only if it is a defect group of $b'$.
	\end{enumerate}
\end{lemma}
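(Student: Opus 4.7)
The plan is to proceed in three stages using the identification $Z(k'G) \cong k' \otimes_k Z(kG)$ (which holds because class sums form a common basis over $k$ and over $k'$) together with the theory of lifting of idempotents. For (i), I would first reduce to studying primitive idempotents of $k' \otimes_k R$, where $R := Z(kG)b$ is a local commutative Artinian $k$-algebra. Lifting through the nilpotent ideal $k' \otimes_k J(R)$ identifies these with primitive idempotents of $k' \otimes_k F_b$, where $F_b := R/J(R)$ is the residue field of $b$. The key observation is that $F_b/k$ is automatically a finite cyclic Galois extension: writing $Z(kG) = k \otimes_{\mathbb{F}_p} Z(\mathbb{F}_p G)$, the field $F_b$ arises as a compositum $k \cdot \mathbb{F}_{p^n}$ for some $n$, and such composites are always Galois over $k$ because the Frobenius generates $\Gal(\mathbb{F}_{p^n}/\mathbb{F}_p)$. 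Writing $F_b = k(\theta)$ with separable minimal polynomial $g \in k[X]$ (which then splits over $F_b$) and factoring $g = g_1 \cdots g_r$ in $k'[X]$, the irreducible factors correspond bijectively to the blocks of $k'G$ lying over $b$. A degree comparison, using that any root of the irreducible factor $g_j \in k'[X]$ of degree $d_j$ corresponding to $b'$ already generates $F_b$ over $k$, shows that the field $k[b']$ generated by the coefficients of $b'$ equals $F_b \cap k'$; since $F_b/k$ is cyclic, every intermediate subfield is Galois over $k$, giving (i).

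For (ii), I would use the $\Gal(k[b']/k)$-action on $k[b']G$: each $\sigma \in \Gal(k[b']/k)$ extends to a $k$-algebra automorphism of $k[b']G$ and, since $\sigma(b) = b$, sends $b'$ to another block $\sigma(b')$ of $k[b']G$ lying over $b$. If $\sigma$ fixes $b'$, it fixes every coefficient of $b'$ and hence fixes $k[b']$ pointwise, forcing $\sigma$ to be trivial; thus the orbit of $b'$ under $\Gal(k[b']/k)$ has size $|\Gal(k[b']/k)| = [F_b \cap k' : k]$. This number equals the number of blocks of $k'G$ lying over $b$ by the degree computation in (i), so the orbit exhausts all such blocks, and summation yields $b = \sum_\sigma \sigma(b')$ in $k'G$.

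For (iii), I would apply the Brauer homomorphism $\br_P$ to the decomposition in (ii). Each $\sigma \in \Gal(k[b']/k)$ induces an automorphism of $k[b']G(P)$ compatible with $\br_P$, so $\br_P(\sigma(b')) = \sigma(\br_P(b'))$ for every $\sigma$; in particular $\br_P(\sigma(b')) \neq 0$ if and only if $\br_P(b') \neq 0$. The idempotents $\sigma(b')$ are mutually orthogonal, so their Brauer images are also mutually orthogonal, and a sum of mutually orthogonal idempotents vanishes if and only if each summand does. Combined with $\br_P^{k'G}(b) = \br_P^{kG}(b)$ under the natural inclusion $kG(P) \hookrightarrow k'G(P)$, this shows that the sets $\{P : \br_P(b) \neq 0\}$ and $\{P : \br_P(b') \neq 0\}$ coincide, so their maximal elements --- the defect groups of $b$ and of $b'$ --- coincide. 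The main obstacle I expect is establishing $k[b'] = F_b \cap k'$ in step (i): the inclusion $\subseteq$ follows directly from the Chinese remainder theorem description of the $e_j$, but the reverse inclusion requires a careful degree argument showing $[L_j : k] = [F_b : k]/d_j$, which relies crucially on the Galois (in fact cyclic) property of $F_b/k$ derived above.
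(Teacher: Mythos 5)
Your proposal is correct, but it takes a genuinely different route from the paper's, and it is worth contrasting the two. For part (i), the paper simply observes that for a primitive $n_{p'}$-th root of unity $\omega$ (where $n=\exp(G)$), the field $k[\omega]$ is a cyclic Galois extension of $k$ which is a splitting field for $G$; hence the coefficients of any block of any extension of $k$ lie in $k[\omega]$, so $k[b']\subseteq k[\omega]$ is Galois over $k$ for free. You instead carry out a full residue-field analysis: identifying the blocks of $k'G$ over $b$ with primitive idempotents of $k'\otimes_k F_b$ for $F_b=Z(kG)b/J(Z(kG)b)$, showing $F_b/k$ is cyclic by descent to $\mathbb{F}_p$, and proving $k[b']=F_b\cap k'$. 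This is more work — in particular the identification $k[b']=F_b\cap k'$ needs care: you must embed $F_b$ and $k'$ into a common algebraic closure (legitimate because $F_b/k$ is Galois so the image is canonical), argue that the irreducible factor $g_j\in k'[X]$ of the minimal polynomial of a generator of $F_b$ actually has coefficients in $F_b\cap k'$ and that these coefficients generate $F_b\cap k'$ over $k$ (which requires a degree argument, as you note), and then observe that the universal idempotent-lifting formulas introduce no new scalars. All of this can be made rigorous, but it is considerably more elaborate than the paper's observation. For part (ii), the paper again takes a shortcut you do not need: once one knows $\sum_{\sigma}\sigma(b')$ is a $\Gal(k[b']/k)$-fixed central idempotent of $k'G$ divided by $b$, it lies in $Z(kG)$, and primitivity of $b$ immediately forces $b=\sum_\sigma\sigma(b')$ — no counting of blocks is required, so (ii) does not depend on the finer structure you established in (i). Your counting argument is valid, but it makes (ii) logically dependent on the detailed residue-field computation. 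For part (iii) the two proofs are essentially identical. In summary: your route is correct and yields extra information (the exact number $[F_b\cap k':k]$ of blocks over $b$ and their common residue field $k'\cdot F_b$), but the paper's route is substantially shorter because the cyclotomic splitting field observation in (i) and the primitivity argument in (ii) sidestep the commutative-algebra machinery entirely.
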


\begin{proof}
	(i) Let $\bar{k'}$ be an algebraic closure of $k'$. Let $n=\exp(G)$, the smallest positive integer such that $g^n=1$ for all $g\in G$. Let $\omega$ be a primitive $n_{p'}$-th root of unity in $\bar{k'}$, where $n_{p'}$ is the $p'$-part of $n$. It is well-known that $k[\omega]$ is a finite Galois extension of $k$. Since $k[\omega]$ is a splitting field of $G$, $k[b']$ is a subfield of $k[\omega]$, and hence $k[b']$ is a finite Galois extension of $k$. 
	
	(ii) Let $\sigma\in \Gal(k[b']/k)$. Then $\sigma(b)$ is a block of $k'G$ satisfying $\sigma(b')b=\sigma(b'b)=\sigma(b')$. Hence $\sigma(b')$ appears in a block decomposition of $b$ in $k'G$. By the definition of $\Gal(k[b']/k)$, if $\sigma(b')=b'$, then $\sigma=1$. Hence if $\sigma\neq \tau\in \Gal(k[b']/k)$, then $\sigma(b')\neq \tau(b')$. Now we have 
	$b(\sum_{\sigma\in \Gal(k[b']/k)}\sigma(b'))=\sum_{\sigma\in \Gal(k[b']/k)}\sigma(b').$ Since $k'[b]/k$ is a finite Galois extension, $\sum_{\sigma\in \Gal(k[b']/k)}\sigma(b')$ is a central idempotent in $kG$. Since $b$ is primitive in $kG$, we have $b=\sum_{\sigma\in \Gal(k[b']/k)}\sigma(b')$.
	
	(iii) Write $\Gamma=\Gal(k[b']/k)$. We identify $\br_P^{kG}:kG\to (kG)(P)$ with the map $(kG)^P\to kC_G(P)$ sending any $\sum_{g\in G}\beta_gg\in (kG)^P$ to $\sum_{g\in C_G(P)}\beta_gg$. By (ii) we have $$\br_P^{kG}(b)=\br_P^{k'G}(b)=\br_P^{k'G}(\sum_{\sigma\in \Gamma}\sigma(b'))=\sum_{\sigma\in \Gamma}\br_P^{k'G}(\sigma(b'))=\sum_{\sigma\in \Gamma}\sigma(\br_P^{k'G}(b')).$$
	If $\br_P^{k'G}(b')\neq 0$, then for any $\sigma\in \Gamma$, $\sigma(\br_P^{k'G}(b'))\neq 0$, and in this case they are orthogonal because $b'$ and $\sigma(b')$ are orthogonal. Therefore, 
	$$\br_P^{kG}(b)\neq 0~~\Longleftrightarrow~~\sum_{\sigma\in \Gamma}\sigma(\br_P^{k'G}(b'))\neq 0~~\Longleftrightarrow~~\br_P^{k'G}(b')\neq 0,$$
	whence statement (iii).
\end{proof}

\begin{theorem}\label{prop:scalar extension of stable equ}
	Keep the notation of \ref{notation:stable equ of Morita type}. Assume that $\O=k$. Let $\bar{k}$ be an algebraic closure of $k$. Let $\t{b}$ and $\t{c}$ be blocks of $\bar{k}G$ and $\bar{k}H$ respectively, such that $b\t{b}=\t{b}$ and $c\t{c}=\t{c}$. Write $\tilde{b}=\sum_{g\in G} \alpha_g g$, where $\alpha_g\in \bar{k}$ and let $k[\tilde{b}]$ be the smallest subfield of $\bar{k}$ containing $k$ and the coefficients $\{\alpha_g\mid g\in G\}$. Assume that $b$ has a nontrivial defect group. The following hold:
	\begin{enumerate}[{\rm (i)}]
		\item We have $k[\t{b}]=k[\t{c}]$, where $k[\t{c}]$ is defined similarly.
		\item Let $k'$ be any extension of $k[\t{b}]$. There is a indecomposable direct summand $M'$ of $k'\otimes_k M$ inducing a stable equivalence of Morita type between $k'G\t{b}$ and $k'H\sigma(\t{c})$ for some $\sigma\in \Gal(k[\t{c}]/k)$. Moreover, $M'$ has $X$ as a vertex.
		\item Keep the notation in (ii). Let $V'$ be a $kX$-source of $M'$. If $V'\cong k'\otimes_k Y$ for some $kX$-module $Y$, then $M$ has $Y$ as a source.
	\end{enumerate}
\end{theorem}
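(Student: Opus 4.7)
The plan is to scalar-extend the defining relation $M\otimes_{kHc}M^*\cong kGb\oplus U_1$ to $\bar k$, decompose $\bar M := \bar k\otimes_k M$ along the block decompositions $b=\sum_{\sigma\in\Gamma_b}\sigma(\t b)$ and $c=\sum_{\tau\in\Gamma_c}\tau(\t c)$ in $\bar kG$ and $\bar kH$ provided by Lemma \ref{lemma:Kessar Linckelmann lemma 6.4} (writing $\Gamma_b=\Gal(k[\t b]/k)$ and $\Gamma_c=\Gal(k[\t c]/k)$), and exploit Galois equivariance. The heart of the argument is a Galois-equivariant bijection $\phi:\Gamma_b\to\Gamma_c$ obtained by matching the non-projective summands on both sides.

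For (i), restricting the scalar extension of $M\otimes_{kHc}M^*\cong kGb\oplus U_1$ to the $(\sigma(\t b),\sigma(\t b))$-component yields
\[
\bigoplus_{\tau\in\Gamma_c}\sigma(\t b)\bar M\tau(\t c)\otimes_{\bar kH\tau(\t c)}\tau(\t c)\bar M^*\sigma(\t b)\cong \bar kG\sigma(\t b)\oplus\sigma(\t b)\bar U_1\sigma(\t b).
\]
Since $b$ has nontrivial defect, $\bar kG\sigma(\t b)$ is a non-projective bimodule summand, so there is a unique $\tau=\tau_\sigma$ for which the corresponding summand on the left contributes this non-projective part. The symmetric argument applied to $M^*\otimes_{kGb}M\cong kHc\oplus U_2$ produces an inverse map $\tau\mapsto\sigma^\tau$, giving a bijection $\phi:\Gamma_b\to\Gamma_c$, $\sigma\mapsto\tau_\sigma$. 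Applying any $\gamma\in\Gal(\bar k/k)$ to the isomorphism (using that $\bar M$, $\bar M^*$ and $\bar U_1$ are scalar extensions of $k$-forms, so $\gamma$ commutes with the structural maps) yields $\tau_{\gamma\sigma}=\gamma\tau_\sigma$, so $\phi$ is Galois-equivariant. Comparing the stabilizers of $\t b$ and $\phi(\t b)$ in $\Gal(\bar k/k)$, and using that $\Gal(\bar k/k[\t c])$ is normal in $\Gal(\bar k/k)$, forces $\Gal(\bar k/k[\t b])=\Gal(\bar k/k[\t c])$, whence $k[\t b]=k[\t c]$.

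For (ii), since $k'\supseteq k[\t b]=k[\t c]$ by (i), both $\t b$, $\t c$ and all their Galois conjugates are blocks of $k'G$ and $k'H$. Running the same analysis over $k'$ produces $\phi$ with $\phi(\t b)=\sigma(\t c)$ for some $\sigma\in\Gamma_c$, so
\[
\t b\cdot k'M\cdot\sigma(\t c)\otimes_{k'H\sigma(\t c)}\sigma(\t c)\cdot k'M^*\cdot\t b\cong k'G\t b\oplus (\text{projective})
\]
together with its dual analogue; hence $\t b\cdot k'M\cdot\sigma(\t c)$ induces a stable Morita equivalence between $k'G\t b$ and $k'H\sigma(\t c)$. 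Taking $M'$ to be an indecomposable direct summand of $\t b\cdot k'M\cdot\sigma(\t c)$, Lemma \ref{lemma:vertex is invariant under scalar extensions} gives that $M'$ has vertex $X$. Statement (iii) is then an immediate application of Lemma \ref{lemma:descent of sources} to the indecomposable summand $M'$ of $k'\otimes_k M$ with vertex $X$ and source $V'\cong k'\otimes_k Y$, yielding $Y$ as a source of $M$.

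The main obstacle is ensuring in (ii) that the chosen indecomposable summand $M'$ itself (and not merely the larger bimodule $\t b\cdot k'M\cdot\sigma(\t c)$) induces the stable equivalence. This requires showing, via the transitive $\Gal(k'/k[\t b])$-action on the indecomposable summands of $k'M$ lying in the block pair $(\t b,\sigma(\t c))$, together with the fact that no such summand can be projective (since all have the nontrivial vertex $X$), that $\t b\cdot k'M\cdot\sigma(\t c)$ is indecomposable, or at least that its indecomposable summands are mutually isomorphic so that the stable Morita equivalence descends to any one of them.
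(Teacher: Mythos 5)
Your approach to part (i) is genuinely different from the paper's. The paper reduces to the case of finite $k$ and then invokes Liu's counting result (\cite[Proposition 2.1]{Liu}) to conclude that $k'Gb$ and $k'Hc$ have the same number of indecomposable algebra summands, which by Lemma~\ref{lemma:Kessar Linckelmann lemma 6.4}(ii) forces $|\Gal(k[\t{b}]/k)| = |\Gal(k[\t{c}]/k)|$; for finite $k$ this gives equality of the two fields. You instead construct a $\Gal(\bar k/k)$-equivariant correspondence between the Galois orbits of $\t b$ and $\t c$ by matching up, for each $\sigma(\t b)$, the unique block $\tau_\sigma(\t c)$ whose bimodule component contains the non-projective summand $\bar kG\sigma(\t b)$ (which is legitimate: $\sigma(\t b)$ has nontrivial defect by Lemma~\ref{lemma:Kessar Linckelmann lemma 6.4}(iii), so $\bar kG\sigma(\t b)$ is indecomposable non-projective, and Krull--Schmidt picks out exactly one $\tau$). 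Your comparison of stabilisers, plus normality of $\Gal(\bar k/k[\t c])$ in $\Gal(\bar k/k)$, then yields $k[\t c]\subseteq k[\t b]$; the analogous map from the $M^*\otimes M$ side gives the reverse inclusion. This actually avoids the paper's reduction to finite $k$ and is self-contained, which is a modest gain. One small imprecision: you assert that the map $\tau\mapsto\sigma^\tau$ is the inverse of $\phi$ without argument. For part (i) this claim is not needed (two equivariant maps in opposite directions already give both inclusions of fields by the stabiliser argument), so this does not affect the validity of (i).

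For part (ii), however, you do need $\psi\circ\phi=\operatorname{id}$, since the claim that $\t b\cdot k'M\cdot\sigma(\t c)$ induces a stable equivalence requires \emph{both} that $\t b\cdot k'M\cdot\sigma(\t c)\otimes\sigma(\t c)\cdot k'M^*\cdot\t b\cong k'G\t b\oplus(\text{proj})$ and the dual identity on the $k'H\sigma(\t c)$ side; the latter is precisely $\psi(\sigma(\t c))=\t b$. This can be patched (if $N=\t b\cdot k'M\cdot\sigma(\t c)$ satisfied $N^*\otimes N$ projective while $N\otimes N^*\cong k'G\t b\oplus(\text{proj})$, then $N\oplus(\text{proj})\cong N\otimes(N^*\otimes N)$ would be projective, contradicting non-projectivity of $N$), but you do not give this argument. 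More seriously, even after this is fixed, you still need that $\t b\cdot k'M\cdot\sigma(\t c)$ itself is indecomposable, or at least that its unique non-projective indecomposable summand again induces a stable equivalence. Your observation that every indecomposable summand has vertex $X\neq 1$ and hence is non-projective is a useful step, but it does not by itself establish indecomposability. The paper sidesteps this entirely by citing \cite[Theorem 2.2]{Liu} (see also \cite[Proposition 5.4.4]{Zimmermann}), which is precisely the statement that a stable equivalence of Morita type between sums of block algebras is induced, block pair by block pair, by a single indecomposable bimodule summand. So the gap you yourself flag at the end is real, and filling it honestly amounts to re-proving (the relevant piece of) Liu's theorem; it would be cleaner to invoke it directly as the paper does. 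Part (iii), which both you and the paper reduce to Lemma~\ref{lemma:descent of sources}, is fine.
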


\begin{proof}
	For the purpose of statement (i), we may assume that $k$ is finite (because we may replace $k$ by the smallest subfield of $k$ containing $b$ and $c$). Since $M$ induces a stable equivalence of Morita type between $kGb$ and $kHc$, the $\bar{k}Gb$-$\bar{k}Hc$-bimodule $\bar{k}\otimes_k M$ induces a stable equivalence of Morita type between $\bar{k}Gb$ and $\bar{k}Hc$. Since $b$ has a nontrivial defect group, by Proposition \ref{prop: P and Q are defect groups}, $c$ has nontrivial defect group. By Lemma \ref{lemma:Kessar Linckelmann lemma 6.4} (iii), any block of $\bar{k}Gb$ or $\bar{k}Hc$ has nontrivial defect group. Then by a result of Liu \cite[Proposition 2.1]{Liu}, $\bar{k}Gb$ and $\bar{k}Hc$ have the same number of indecomposable direct summands. On the other hand, by Lemma \ref{lemma:Kessar Linckelmann lemma 6.4} (ii), $\bar{k}Gb$ has $|\Gal(k[\t{b}]/k])|$ indecomposable direct summands, and $\bar{k}Hc$ has $|\Gal(k[\t{c}]/k])|$ indecomposable direct summands, which forces $|\Gal(k[\t{b}]/k])|=|\Gal(k[\t{c}]/k])|$. Since $k[\t{b}]/k$ and $k[\t{c}]/k$ are Galois extensions (see Lemma \ref{lemma:Kessar Linckelmann lemma 6.4} (i)), we obtain $|k[\t{b}]:k|=|k[\t{c}]:k|$. Since $k$ is a finite field, this implies $k[\t{b}]=k[\t{c}]$, proving (i).

	Now we drop the assumption in (i) that $k$ is finite. By Lemma \ref{lemma:Kessar Linckelmann lemma 6.4} (ii), $k'Gb=\oplus_{\sigma\in\Gal(k[\tilde{b}]/k)}k'G\sigma(\t{b})$ and $k'Hc=\oplus_{\sigma\in\Gal(k[\tilde{c}]/k)}k'G\sigma(\t{c})$. Now the $k'Gb$-$k'Hc$-bimodule $k'\otimes_k M$ induces a stable equivalence of Morita type between $k'Gb$ and $k'Hc$. By  \cite[Theorem 2.2]{Liu} (see \cite[Proposition 5.4.4]{Zimmermann} for a slightly general version), there is an indecomposable direct summand $M'$ of $k'\otimes_k M$ inducing a stable equivalence of Morita type between $k'G\t{b}$ and $k'H\sigma(\t{c})$ for some $\sigma\in \Gal(k[\tilde{c}]/k)$. By Lemma \ref{lemma:vertex is invariant under scalar extensions}, $M'$ has $X$ as a vertex, proving (ii). Statement (iii) follows from Lemma \ref{lemma:descent of sources}.
\end{proof}

\begin{proof}[Proof of Theorem \ref{theorem:descent to finite fields}]
If $b$ has a trivial defect group, then by Proposition \ref{prop: P and Q are defect groups}, $X$ is trivial and there is nothing to prove. So we may assume that $b$ has a nontrivial defect group.	Under assumption of Theorem \ref{theorem:descent to finite fields}, we can use the notation in Theorem \ref{prop:scalar extension of stable equ}. By Theorem \ref{prop:scalar extension of stable equ}, there is an indecomposable direct summand $M'$ of $\bar{k}\otimes_k M$ inducing a stable equivalence of Morita type between $\bar{k}G\t{b}$ and $\bar{k}H\sigma(\t{c})$ for some $\sigma\in \Gal(k[\tilde{b}]/k)$. Moreover, $M'$ has $X$ as a vertex. Since $\bar{k}$ is algebraically closed and since $X\cong p_1(X)$ (resp. $X\cong p_2(X)$), by Theorem \ref{theorem:vertices imply sources}, $M'$ has an endopermutation $\bar{k}X$-module $V'$ as a source. By our assumption and by the classification of indecomposable endopermutation modules, $V'$ is defined over $k$; see \cite[Theorem 9.5]{Bouc:The Dade group} or \cite[Theorem 13.3]{Th07}. Hence there exists an endopermutation $kX$-module $Y$ such that $V'\cong \bar{k}\otimes_k Y$. By Theorem \ref{prop:scalar extension of stable equ} (iii), $Y$ is a source of $M$. It follows that any source of $M$ is an endopermutation module.
\end{proof}

\section{Proof of Theorem \ref{theorem: Puig 7.2}}\label{section:proof of Puig's Theorem 7.2}

To prove Theorem \ref{theorem: Puig 7.2}, we need the following lemmas.

\begin{lemma}[{\cite[Lemma 7.16]{Puig1999}}]\label{lemma: Puig 7.16}
With the notation of Theorem \ref{theorem: Puig 7.2}, for any subgroup $Q$ of $P$, the $k$-algebras $A(Q)$, $B(Q)$ and $S(Q)$ are nonzero, and $\mu$ induces a nondegenerate symmetric $k$-linear form $\mu(Q):A(Q)\to k$.
\end{lemma}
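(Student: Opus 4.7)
The plan is to derive the lemma from the preliminaries on Brauer quotients already assembled; the form statement will be essentially immediate, so the substance lies in the three nonvanishing claims.

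First, for the form, I would simply invoke Lemma \ref{lemma: linear form Puig}: the interior $P$-algebra $A$ has a $P$-stable $\O$-basis, $\mu:A\to \O$ is nondegenerate and symmetric, and $A(P)\neq 0$ by hypothesis, so that lemma directly produces the nondegenerate symmetric $k$-linear form $\mu(Q):A(Q)\to k$, $\br_Q^A(a)\mapsto \overline{\mu(a)}$, for every subgroup $Q$ of $P$, via the dual-basis argument of its proof.

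Next, to establish $A(Q)\neq 0$, I would exploit the fact that every subgroup of a $p$-group is subnormal, fixing a chain $Q=Q_0\unlhd Q_1\unlhd \cdots \unlhd Q_n=P$, and arguing by downward induction on $i\in\{0,1,\ldots,n\}$. The base case $A(Q_n)=A(P)\neq 0$ is given. For the inductive step, view $A$ as a $Q_{i+1}$-algebra by restriction of the $P$-action; the $P$-stable $\O$-basis of $A$ is in particular $Q_{i+1}$-stable, and $Q_{i+1}\leq N_{Q_{i+1}}(Q_i)$ since $Q_i\unlhd Q_{i+1}$. Lemma \ref{lemma:Puig 7.10} then supplies the isomorphism $\alpha_A(Q_i,Q_{i+1}):A(Q_{i+1})\xrightarrow{\cong} A(Q_i)(Q_{i+1})$, and $A(Q_{i+1})\neq 0$ forces $A(Q_i)(Q_{i+1})\neq 0$, so in particular $A(Q_i)\neq 0$.

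Finally, to obtain $B(Q)\neq 0$ and $S(Q)\neq 0$, I would combine the embedding $g:A\to S\otimes_\O B$ with the tensor-product compatibility of the Brauer construction. By Lemma \ref{lemma:embbeding} (ii), $g$ induces an embedding $g(Q):A(Q)\to (S\otimes_\O B)(Q)$. Since $B$ has a $P$-stable and hence $Q$-stable $\O$-basis, Lemma \ref{lemma:Puig 7.10} gives an isomorphism $\alpha_{S,B}(Q):S(Q)\otimes_k B(Q)\xrightarrow{\cong} (S\otimes_\O B)(Q)$. Composing the inverse of this isomorphism with $g(Q)$ embeds the nonzero space $A(Q)$ into $S(Q)\otimes_k B(Q)$, forcing both $S(Q)$ and $B(Q)$ to be nonzero.

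The only point requiring care is that Lemma \ref{lemma:Puig 7.10} relates $A(Q)(R)$ to $A(R)$ only when $R$ normalises $Q$, so for a general $Q\leq P$ which is not normal in $P$ one cannot compare $A(Q)$ and $A(P)$ in a single step; the subnormal chain is precisely what bridges this gap. Beyond this bookkeeping, the argument is a straightforward assembly of Section \ref{section:Preliminaries on $G$-algebras and Brauer homomorphisms}, and I do not expect any real obstacle.
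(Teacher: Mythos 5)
Your proposal is correct, and for the $B(Q)$, $S(Q)$, and $\mu(Q)$ parts it coincides exactly with the paper's argument: apply the $Q$-Brauer functor to $g$, use Lemma \ref{lemma:Puig 7.10} to split $(S\otimes_\O B)(Q)$, and invoke Lemma \ref{lemma: linear form Puig} for the form. However, for $A(Q)\neq 0$ you take a genuinely different and noticeably longer route. The paper observes that if $X$ is a $P$-stable $\O$-basis of $A$, then Lemma \ref{lemma:basis} says $\br_P^A(X^P)$ is a $k$-basis of $A(P)$, so $A(P)\neq 0$ forces $X^P\neq\varnothing$; since $Q\leq P$ we have $X^P\subseteq X^Q$, hence $X^Q\neq\varnothing$ and again by Lemma \ref{lemma:basis} $A(Q)\neq 0$. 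This is a one-line fixed-point count and avoids the subnormal chain, Lemma \ref{lemma:Puig 7.10}, and the $\alpha_A(Q_i,Q_{i+1})$ isomorphisms entirely. Your induction along $Q=Q_0\unlhd\cdots\unlhd Q_n=P$ is valid — the concern you flag about needing $R$ to normalise $Q$ is real and your chain does bridge it — but the caveat you raise is itself a symptom of having chosen heavier machinery than needed; the containment $X^P\subseteq X^Q$ sidesteps the normalisation issue completely. So: correct, but the paper's route to $A(Q)\neq 0$ is simpler and more in the spirit of Lemma \ref{lemma:basis}, which is already stated precisely to make this kind of nonvanishing immediate.
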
	

\begin{proof}
Let $X$ be a $P$-stable $\O$-basis of $A$. Since $A(P)\neq 0$, by Lemma \ref{lemma:basis}. $X^P\neq \varnothing$. Hence $X^Q\neq \varnothing$, and this implies $A(Q)\neq 0$. Applying the $Q$-Brauer functor to the embedding $g$, we obtain an embedding 
$$g(Q):A(Q)\to (S\otimes_\O B)(Q)\cong S(Q)\otimes_k B(Q),$$
where the isomorphism holds by the assumption that $B$ has a $P$-stable $\O$-basis; see Lemma \ref{lemma:Puig 7.10}. So $S(Q)$ and $B(Q)$ are nonzero.
The last statement follows from Lemma \ref{lemma: linear form Puig}. 
\end{proof}

\begin{lemma}[{\cite[Lemma 7.17]{Puig1999}}]\label{lemma: Puig 7.17}
Keep the notation of Theorem \ref{theorem: Puig 7.2}. For any subgroup $Q$ of $P$ and any subgroup $R$ of $N_P(Q)$ containing $Q$, we have $S(Q)(R)\neq 0$. Moreover, there exist a local point $\gamma$ of $Q$ on $S$ and a local point $\delta$ of $R$ on $S$ such that $Q_\gamma\leq R_\delta$.  
\end{lemma}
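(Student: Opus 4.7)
The plan is to establish the two conclusions separately, using the embedding $g$ together with the $P$-stable bases of $A$ and $B$ to transfer the nonvanishing of $A(R)$ (Lemma~\ref{lemma: Puig 7.16}) down to $S(Q)(R)$, and then to exploit an equivariant idempotent lifting to produce the asserted local pointed groups.

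For $S(Q)(R)\neq 0$, I first apply the $Q$-Brauer functor to $g$, obtaining an embedding $g(Q):A(Q)\hookrightarrow(S\otimes_\O B)(Q)$ via Lemma~\ref{lemma:embbeding}(ii). Since $B$ has a $P$-stable (hence $Q$-stable) $\O$-basis, Lemma~\ref{lemma:Puig 7.10} identifies $(S\otimes_\O B)(Q)\cong S(Q)\otimes_k B(Q)$. By Lemma~\ref{lemma:basis}, the image under $\br_Q^B$ of the $Q$-fixed elements of $B$'s basis is an $N_P(Q)$-stable (in particular $R$-stable) $k$-basis of $B(Q)$, so after applying the $R$-Brauer functor, a second use of Lemma~\ref{lemma:Puig 7.10} gives $(S(Q)\otimes_k B(Q))(R)\cong S(Q)(R)\otimes_k B(Q)(R)$. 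On the other hand, Lemma~\ref{lemma:Puig 7.10} applied to $A$ furnishes the transitivity isomorphism $A(R)\cong A(Q)(R)$, and $A(R)\neq 0$ by Lemma~\ref{lemma: Puig 7.16}. Composing with the embedding $g(Q)(R):A(Q)(R)\hookrightarrow S(Q)(R)\otimes_k B(Q)(R)$ therefore forces $S(Q)(R)\neq 0$ (and incidentally $B(Q)(R)\neq 0$).

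For the existence of local points, I construct $\delta$ first and then extract $\gamma$ by a primitive decomposition. By \ref{void:points}(ii) applied to $S(Q)$ as an $R$-algebra (on which $Q$ acts trivially), a primitive idempotent of $S(Q)(R)$ lifts to a primitive $R$-invariant local idempotent $\bar f\in S(Q)^R$. Using that relative trace ideals from proper subgroups lie in the Jacobson radical so that $\ker(\br_Q^S)\subseteq J(S^Q)$, Puig's equivariant lifting of $G$-invariant primitive idempotents (with $G=R/Q$ a $p$-group acting on $S^Q$ and preserving $\ker(\br_Q^S)$) lifts $\bar f$ further to a primitive idempotent $f\in(S^Q)^{R/Q}=S^R$ with $\br_Q^S(f)=\bar f$. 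The transitivity relation $\alpha_S(Q,R)(\br_R^S(f))=\br_R^{S(Q)}(\bar f)\neq 0$ shows $\br_R^S(f)\neq 0$, so $f$ is a primitive local idempotent of $R$ on $S$; let $\delta$ be its point. Decomposing $f=\sum_\ell i_\ell$ primitively in $S^Q$, the identity $\bar f=\sum_\ell\br_Q^S(i_\ell)\neq 0$ forces some $i:=i_\ell$ to be local at $Q$; taking $\gamma$ to be its local point of $Q$ on $S$, the relations $if=i=fi$ furnish $Q_\gamma\leq R_\delta$. The main obstacle is the equivariant lifting step: it invokes Puig's theorem that primitive $G$-invariant idempotents lift across quotients of $G$-algebras by $G$-stable ideals contained in the Jacobson radical when $G$ is a finite $p$-group, which is a standard but nontrivial modular tool; the remaining work is bookkeeping to verify the stable-basis hypotheses needed for the two applications of Lemma~\ref{lemma:Puig 7.10} giving the tensor-product Brauer isomorphisms.
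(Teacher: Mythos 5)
Your argument for $S(Q)(R)\neq 0$ is essentially the paper's: apply the Brauer functor to the embedding $g$, use Lemma~\ref{lemma:Puig 7.10} (twice) together with the transitivity isomorphism $A(R)\cong A(Q)(R)$, and invoke $A(R)\neq 0$ from Lemma~\ref{lemma: Puig 7.16}. That half is fine.

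The construction of the local pointed groups has a genuine gap. You assert that ``relative trace ideals from proper subgroups lie in the Jacobson radical so that $\ker(\br_Q^S)\subseteq J(S^Q)$,'' and you need this inclusion to apply the equivariant idempotent-lifting theorem and pull $\bar f\in S(Q)^R$ back to $f\in S^R$. But this inclusion is false in general: $\ker(\br_Q^S)=\sum_{Q'<Q}\Tr_{Q'}^Q(S^{Q'})+J(\O)S^Q$ is typically \emph{not} contained in $J(S^Q)$. For instance, if $\Res_Q^P V$ has a nonzero projective (hence free) direct summand $W$, then the projection $e_W$ onto $W$ is a nonzero idempotent in $S_1^Q\subseteq\ker(\br_Q^S)$, and a nonzero idempotent can never lie in the Jacobson radical. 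Since $V$ has vertex $P$ but is not yet known to be endopermutation, you have no control on $\Res_Q^P V$ for a proper subgroup $Q<P$, so the hypothesis of the lifting theorem cannot be verified, and the step producing $f$ does not go through. Note that you are lifting in the ``hard'' direction — from $S(Q)^R$ \emph{up} to $S^R$ — which amounts to asking that the restriction $(\br_Q^S)^R: S^R\to S(Q)^R$ be surjective (or at least a covering), and this is not automatic.

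The paper sidesteps this entirely by going in the opposite direction. Since $\alpha_S(Q,R):S(R)\to S(Q)(R)$ is unitary and both $S(R)$ and $S(Q)(R)$ are nonzero, writing $1_{S(R)}$ as a sum of images $\br_R^S(j)$ over a primitive decomposition of $1_S$ in $S^R$, some $j$ satisfies $\alpha_S(Q,R)(\br_R^S(j))=\br_R^{S(Q)}(\br_Q^S(j))\neq 0$; in particular $\br_R^S(j)\neq 0$ and $\br_Q^S(j)\neq 0$. Taking $\delta$ to be the local point containing $j$, and then a primitive idempotent $i\in S^Q$ inside $j$ with $\br_Q^S(i)\neq 0$, gives the local point $\gamma$ with $Q_\gamma\leq R_\delta$. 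This only ever decomposes idempotents and never requires lifting against $\br_Q^S$, so it avoids the obstruction above. You could salvage your write-up by replacing the lifting step with this direct argument; the rest of your bookkeeping (the two applications of Lemma~\ref{lemma:Puig 7.10}) is correct.
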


\begin{proof}
The embedding $g:A\to S\otimes_\O B $ induces a $k$-algebra embedding
$$A(Q)(R)\to (S\otimes_\O B)(Q)(R).$$
Since $B$ has a $P$-stable $\O$-basis, we can use Lemma \ref{lemma:Puig 7.10} to obtain the following $k$-algebra embedding
$$A(R)\to S(Q)(R)\otimes_k B(R),$$
which forces $S(Q)(R)\neq 0$. Moreover, since the homomorphism $\alpha_S(Q,R):S(R)\to S(Q)(R)$ is unitary, there exists a primitive idempotent $j\in S^R$ such that $\alpha_S(Q,R)(\br_R^S(j))\neq 0$. By the definition of $\alpha_S(Q,R)$, we have $\alpha_S(Q,R)(\br_R^S(j))=\br_R^{S(Q)}(\br_Q^S(j))$; see \ref{equation: Brauer const. transitivity}. Hence $\br_Q^S(j)\neq 0$. Thus there exists a primitive idempotent $i\in S^Q$ such that $ij=i=ji$ and $\br_Q^S(i)\neq 0$. By definition, both $i$ and $j$ are local idempotents. Let $\gamma$ be the local point of $Q$ on $S$ containing $i$ and $\delta$ the local point of $R$ on $S$ containing $j$. Then by definition $Q_\gamma\leq R_\delta$.
\end{proof}

\begin{lemma}[{\cite[Lemma 7.18]{Puig1999}}]\label{lemma: Puig Lemma 7.18}
Keep the notation of Theorem \ref{theorem: Puig 7.2}. There is an embedding $f:\O\to S^{\rm op}\otimes_\O S$ of $P$-interior algebras, where $S^{\rm op}\otimes_\O S$ is an interior $P$-algebra via the structure homomorphism $P\to (S^{\rm op}\otimes_\O S)^\times$ sending $x$ to $x^{-1}\otimes x$ for any $x\in P$.
\end{lemma}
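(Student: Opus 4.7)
The plan is to identify $S^{\rm op}\otimes_\O S$ with $\End_\O(S)$ and to exhibit the required embedding as the projection onto the $P$-fixed line $\O\cdot 1_S\subseteq S$.

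First, since $V$ is $\O$-free, so is $S$, and the natural map
\[
\Phi:S^{\rm op}\otimes_\O S \longrightarrow \End_\O(S),\qquad \Phi(a^{\rm op}\otimes b)(s)=bsa,
\]
is an isomorphism of $\O$-algebras. Under $\Phi$, the structure element $x^{-1}\otimes x$ corresponds to the conjugation map $\mathrm{cj}_x:s\mapsto xsx^{-1}$, so the interior $P$-algebra structure on $S^{\rm op}\otimes_\O S$ prescribed in the lemma becomes the natural ``conjugation'' interior structure on $\End_\O(S)$, with $P$ acting on $S$ by conjugation via the structure map $P\to S^\times$. The decisive feature is that $1_S\in S$ is fixed by every $\mathrm{cj}_x$.

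Next I translate the conclusion. Following Puig, a (not necessarily unitary) homomorphism $f:\O\to B$ of interior $P$-algebras with $f(1)=e$ amounts to imposing $f(x_\O)=f(1)\,x_B\,f(1)$; since the structure map $P\to\O^\times$ is trivial this collapses to
\[
e\,x_B\,e=e\quad\text{for every }x\in P,
\]
which under $\Phi$ becomes $e\,\mathrm{cj}_x\,e=e$. For $f$ to moreover be an embedding one requires $\ker f=0$ and $\mathrm{Im}(f)=eBe$, equivalently $eBe\cong\O$, which happens precisely when $e$ is the projection onto a rank-one $\O$-direct summand of $S$.

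I then carry out the construction by taking $e$ to be the projection of $S$ onto the line $\O\cdot 1_S$. This line is a pure $\O$-submodule of $S$ because the image of $1_S$ in $S/J(\O)S\cong\End_k(k\otimes_\O V)$ is the identity matrix, which is nonzero; hence $1_S$ extends to an $\O$-basis of $S$ and $\O\cdot 1_S$ is an $\O$-direct summand. Pick any $\O$-complement $W$, let $\lambda:S\to\O$ be the coordinate along $1_S$ (so $\lambda(1_S)=1$ and $\lambda(W)=0$), and set $e(s)=\lambda(s)\cdot 1_S$. Then $eBe\cong\O$ is immediate, and $e\,\mathrm{cj}_x\,e=e$ follows from $\mathrm{cj}_x(1_S)=1_S$ by the one-line computation
\[
(e\,\mathrm{cj}_x\,e)(s)=e\bigl(\lambda(s)\,\mathrm{cj}_x(1_S)\bigr)=e\bigl(\lambda(s)\cdot 1_S\bigr)=\lambda(s)\cdot 1_S=e(s).
\]
The one subtlety worth emphasising --- and the only real obstacle the proof must clear --- is that the complement $W$ need not be $P$-invariant, and correspondingly $e$ need not lie in $(\End_\O(S))^P$; a $P$-stable complement of $\O\cdot 1_S$ need not exist in general (for instance when $p\mid\mathrm{rk}_\O(V)$, so a trace-based splitting is unavailable). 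It is precisely Puig's non-unitary compatibility condition $e=e\,x_B\,e$, strictly weaker than full $P$-invariance of $e$, that allows the construction to go through.
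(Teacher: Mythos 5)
The proposal has a genuine gap, and it is precisely at the place you flag as the ``only real obstacle.'' The characterization of a non-unitary homomorphism of interior $P$-algebras is wrong. A homomorphism of interior $P$-algebras is in particular a homomorphism of $P$-algebras (for the conjugation action), so it must be $P$-equivariant; applied to $1_\O$, this forces $e:=f(1)\in (S^{\rm op}\otimes_\O S)^P$, i.e.\ $e$ must \emph{commute} with $x^{-1}\otimes x$ for every $x\in P$. This is strictly stronger than $e\,(x^{-1}\otimes x)\,e=e$, and it is exactly the requirement you dismiss. Under the identification with $\End_\O(S)$, the statement of the lemma is therefore equivalent to exhibiting a rank-one, \emph{$P$-invariant} idempotent projection onto $\O\cdot 1_S$ --- that is, to showing that $\O\cdot 1_S$ is an $\O P$-module direct summand of $S$ (for the conjugation action). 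A projection along an arbitrary $\O$-complement does not accomplish this.

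Two ways to see that the weaker condition cannot be the right one here. First, the only place Lemma~\ref{lemma: Puig Lemma 7.18} is used (e.g.\ in Lemma~\ref{lemma:Puig 7.20}) is to apply Lemma~\ref{lemma:embbeding}(iii) to $f\otimes{\rm id}_B$; that lemma concerns embeddings in the $P$-equivariant sense and explicitly requires the idempotent $(f\otimes{\rm id}_B)(1_B)$ to lie in $(S^{\rm op}\otimes_\O S\otimes_\O B)^P$, which forces $e\in(S^{\rm op}\otimes_\O S)^P$. Second, your argument uses none of the standing hypotheses of Theorem~\ref{theorem: Puig 7.2} --- not the embedding $g$, not the permutation $\O$-bases of $A$ and $B$, not $A(P)\neq 0$ --- so if it were valid it would prove the conclusion for an arbitrary indecomposable $\O P$-module $V$. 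That is false: for $V=\O P$ with $P\neq 1$, one has $\End_\O(V)\cong \O P\otimes_\O(\O P)^\vee\cong(\O P)^{|P|}$ as $\O P$-modules under conjugation, which is free, hence has no trivial direct summand, so no $P$-invariant rank-one idempotent projecting onto $\O\cdot 1_S$ exists.

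The actual content of the lemma, and of the paper's proof, is the nontrivial step you skip: that the trivial $\O P$-module does occur as a direct summand of $S$. The paper gets this by observing that $A(P)\neq 0$ together with the $P$-stable basis of $A$ forces $\O$ to be a direct summand of $A$, that the embedding $g$ then makes $\O$ a direct summand of $S\otimes_\O B$, and that the permutation structure of $B$ (so that summands of $S\otimes_\O B$ are of the form $\Ind_Q^P(W)$ with $W\mid\Res_Q^P S$) then forces $\O$ to be a direct summand of $S$ itself. Those are exactly the ingredients your proof fails to invoke, and they are essential.
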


\begin{proof}
	We have an isomorphism
	\begin{equation}\label{equation:Sop otimes S iso to End(S)}
	S^{\rm op}\otimes_\O S\cong \End_\O(S)
	\end{equation}
	of $\O$-algebras sending $s_1\otimes s_2$ to the map $(s\mapsto s_2ss_1)\in \End_\O(S)$, for any $s_1\in S^{\rm op}$, $s_2\in S$ and $s\in S$. 
 Hence by the isomorphism (\ref{equation:Sop otimes S iso to End(S)}), $\End_\O(S)$ is an interior $P$-algebra with the structure homomorphism
$P \to \End_\O(S)^\times$, $x \mapsto (s\mapsto xsx^{-1})$ for any $x\in P$ and $s\in S$. We regard $S$ as an $\O P$-module via this structure map.  So it suffices to show that the trivial $\O P$-module $\O$ is a direct summand of the $\O P$-module $S$.  By our assumption, under the conjugation action, $A$ and $B$ are permutation $\O P$-modules. Hence any indecomposable direct summand of the $\O P$-module $B$ is isomorphic to $\Ind_Q^P(\O)$ for some subgroup $Q$ of $P$, and therefore any indecomposable direct summand of the $\O P$-module $S\otimes_\O B$ is isomorphic to $\Ind_Q^P(W)$ for some indecomposable direct summand $W$ of $\Res_Q^P(S)$.  Since $A(P)\neq 0$, by Lemma \ref{lemma:basis} we see that $\O$ is isomorphic a direct summand of the $\O P$-module $A$.  Since the embedding $g:A\to S\otimes_\O B$ is also an injective homomorphism of $\O P$-modules, $\O$ is also isomorphic to a direct summand of the $\O P$-module $S\otimes_\O B$. Consequently, at least once we have $Q=P$ and $W=\O$.
\end{proof}

\begin{lemma}[{\cite[Lemma 7.19]{Puig1999}}]\label{lemma: Puig lemma 7.19}
Keep the notation of Theorem \ref{theorem: Puig 7.2}. We have a $k$-algebra isomorphism
$$(S^{\rm op}\otimes_\O S\otimes_\O B)(P)\cong S(P)^{\rm op}\otimes_k S(P)\otimes_k B(P).$$
In particular, $P$ has a unique local point on $S^{\rm op}\otimes_\O S\otimes_\O B$, which has multiplicity one.	
\end{lemma}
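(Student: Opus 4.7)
The plan is to carry out two applications of the tensor--Brauer identity and then interpret the result in terms of points. First, since $B$ has a $P$-stable $\O$-basis, Lemma~\ref{lemma:Puig 7.10} applied to the pair $(S^{\rm op}\otimes_\O S,\,B)$ yields the $k$-algebra isomorphism
\[
\alpha_{S^{\rm op}\otimes S,\,B}(P)\colon (S^{\rm op}\otimes_\O S)(P)\otimes_k B(P)\;\xrightarrow{\ \cong\ }\;(S^{\rm op}\otimes_\O S\otimes_\O B)(P),
\]
so the asserted algebra isomorphism reduces to establishing
\[
(S^{\rm op}\otimes_\O S)(P)\cong S(P)^{\rm op}\otimes_k S(P). \qquad (\star)
\]

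To prove $(\star)$, I would identify $S^{\rm op}\otimes_\O S\cong \End_\O(S)$ as interior $P$-algebras (as in the proof of Lemma~\ref{lemma: Puig Lemma 7.18}), with $S=\End_\O(V)$ carrying the conjugation $P$-action, and then compute $\End_\O(S)(P)$. The natural map $\alpha_{S^{\rm op},S}(P)\colon S(P)^{\rm op}\otimes_k S(P)\to (S^{\rm op}\otimes_\O S)(P)$ from (\ref{equation: Brauer map tensor products}) is always a well-defined $k$-algebra homomorphism, and the task is to show bijectivity. The obstacle is that Lemma~\ref{lemma:Puig 7.10} does not apply directly: neither $S$ nor $S^{\rm op}$ is a priori known to have a $P$-stable $\O$-basis---indeed, establishing this for $S$ is essentially the conclusion of Theorem~\ref{theorem: Puig 7.2} itself. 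To bypass this, I would use the indecomposability of $V$ (which makes $S^P=\End_{\O P}(V)$ local with residue field $k$, hence $S(P)$ a split local $k$-algebra), Lemma~\ref{lemma: Puig 7.16} (which gives $S(Q)\neq 0$ for every $Q\leq P$), and Lemma~\ref{lemma: Puig Lemma 7.18} (which supplies the trivial $\O P$-summand $\O\hookrightarrow S$). Together these pin down the vertex-$P$ part of $S$ as an $\O P$-module and allow one to compute $\End_\O(S)(P)$ directly, identifying it with $S(P)^{\rm op}\otimes_k S(P)$.

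Finally, for the unique-local-point-of-multiplicity-one assertion, the indecomposability of $V$ makes $1_S$ primitive in $S^P$ while by hypothesis $1_B$ is primitive in $B^P$; together with $S(P),B(P)\neq 0$ (Lemma~\ref{lemma: Puig 7.16}) and splitness of all quotients in sight (since $k$ is algebraically closed), Lemma~\ref{lemma:local point isomorphism} yields a unique local point of multiplicity one for $P$ on each of $S$ and $B$. Proposition~\ref{prop:bijection between points} applied iteratively then produces a unique point of multiplicity $1\cdot 1\cdot 1 = 1$ on $S(P)^{\rm op}\otimes_k S(P)\otimes_k B(P)$, which transports to the required statement on $S^{\rm op}\otimes_\O S\otimes_\O B$ via the algebra isomorphism. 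The main obstacle is clearly the second step, i.e.\ the isomorphism $(\star)$: since the tensor--Brauer formula of Lemma~\ref{lemma:Puig 7.10} is unavailable for the pair $(S^{\rm op}, S)$, one must exploit the endomorphism-algebra structure and the vertex/indecomposability information supplied by Lemmas~\ref{lemma: Puig 7.16} and~\ref{lemma: Puig Lemma 7.18} to compensate.
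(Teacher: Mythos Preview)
Your reduction in step~1 and the local-algebra argument in step~3 are both fine, and step~3 is essentially what the paper does for the ``in particular'' clause. The gap is entirely in step~2, and it is a real one.

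You propose to establish $(\star)$ by identifying $S^{\rm op}\otimes_\O S\cong\End_\O(S)$ and then ``computing $\End_\O(S)(P)$ directly'' from the module-theoretic information in Lemmas~\ref{lemma: Puig 7.16} and~\ref{lemma: Puig Lemma 7.18}. But those lemmas only tell you that $S(Q)\neq 0$ for every $Q\le P$ and that the trivial module is a summand of $S$ under conjugation. That is far from determining the $\O P$-module structure of $S$, and hence far from determining $\End_\O(S)(P)$. Writing $S\cong \O\oplus S'$ and expanding $\End_\O(S)$ into four Hom-pieces, you would need $\End_\O(S')(P)\cong S'(P)^{\rm op}\otimes_k S'(P)$, which is the same problem one level down with no new leverage. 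In effect, knowing that $S$ is a permutation $\O P$-module is precisely the content of Theorem~\ref{theorem: Puig 7.2}, so any argument that implicitly relies on the $\O P$-structure of $S$ being tame is circular.

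The paper avoids $(\star)$ altogether. Instead of splitting off $B$ first, it tensors the given embedding $g:A\to S\otimes_\O B$ by ${\rm id}_{S^{\rm op}}$ and applies the $P$-Brauer functor, obtaining the commutative square
\[
\xymatrix{
(S^{\rm op}\otimes_\O A)(P)\ar[rr]^{({\rm id}_{S^{\rm op}}\otimes g)(P)} & & (S^{\rm op}\otimes_\O S\otimes_\O B)(P)\\
S(P)^{\rm op}\otimes_k A(P)\ar[u]^{\alpha_{S^{\rm op},A}(P)}\ar[rr]^{{\rm id}\otimes g(P)} & & S(P)^{\rm op}\otimes_k (S\otimes_\O B)(P).\ar[u]_{\alpha_{S^{\rm op},S\otimes_\O B}(P)}
}
\]
The crucial point is that it is $A$ (and $B$), not $S$, that has a $P$-stable basis: this makes the left vertical map an isomorphism and identifies the bottom-right corner with $S(P)^{\rm op}\otimes_k S(P)\otimes_k B(P)$. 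Now your step~3 argument---that this triple tensor is local---is used \emph{here}, before the conclusion: since the bottom map is an embedding into a local algebra, it must hit the identity and is therefore an isomorphism. Chasing the square then forces the right vertical map $\alpha_{S^{\rm op},S\otimes_\O B}(P)$ to be an isomorphism, which is exactly the asserted identification. So the missing idea is to bring the embedding $g$ and the $P$-stable basis of $A$ into play, rather than trying to analyse $S$ on its own.
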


\begin{proof}
Applying the second commutative diagram in Lemma \ref{lemma: commutative diagrams} to the homomorphisms ${\rm id}_{S^{\rm op}}:S^{\rm op}\to S^{\rm op}$ and $g: A\to S\otimes_\O B$ instead of $f$ and $g$, respectively, we obtain the following commutative diagram:
$$\xymatrix{(S^{\rm op}\otimes_\O A)(P) \ar[rrr]^{({\rm id}_{S^{\rm op}}\otimes g)(P)~~~} &  & & (S^{\rm op}\otimes_\O S\otimes_\O B)(P) \\ 
	S(P)^{\rm op}\otimes_k A(P)\ar[rrr]^{{\rm id}_{S(P)^{\rm op}}\otimes g(P)~~~~}  \ar[u]^{\alpha_{S^{\rm op},A}(P)}  & &  &  S(P)^{\rm op}\otimes_k (S\otimes_\O B)(P)\ar[u]_{\alpha_{S^{\rm op},S\otimes_\O B}(P)}   } $$ 
Since $A$ and $B$ have $P$-stable $\O$-bases, by Lemma \ref{lemma:Puig 7.10}, $\alpha_{S^{\rm op},A}(P)$ is an isomorphism and $S(P)^{\rm op}\otimes_k (S\otimes_\O B)(P)$ is isomorphic to
$S(P)^{\rm op}\otimes_k S(P)\otimes_k B(P)$. Since $S(P)\neq 0$ (see Lemma \ref{lemma: Puig 7.16}) and $V$ is indecomposable, the unity element of $S(P)$ is primitive in $S(P)$ (and hence also in $S(P)^{\rm op}$); see \ref{void:points} (ii).  Since $1_B$ is primitive in $B^P$, again by \ref{void:points} (ii), the unity element of $B(P)$ is primitive in $B(P)$. Hence the unity element of $S(P)^{\rm op}\otimes_k S(P)\otimes_k B(P)$ is primitive; see e.g. \cite[Lemma 2.2]{KL} (here we used the assumption that $k$ is algebraically closed). This forces the embedding ${\rm id}_{S(P)^{\rm op}}\otimes g(P)$  to be an isomorphism. Since the homomorphism $\alpha_{S^{\rm op},S\otimes_\O B}(P)$ is unitary, by the commutative diagram, the embedding $({\rm id}_{S^{\rm op}}\otimes g)(P)$ must be unitary, and hence an isomorphism. This forces $\alpha_{S^{\rm op},S\otimes_\O B}(P)$ to be an isomorphism, proving the first statement. The last statement follows from the first; see Lemma \ref{lemma:local point isomorphism}.
\end{proof}

\begin{lemma}[{\cite[Lemma 7.20]{Puig1999}}]\label{lemma:Puig 7.20}
Keep the notation of Theorem \ref{theorem: Puig 7.2} and Lemma \ref{lemma: Puig Lemma 7.18}. There is an embedding $g':B\to S^{\rm op}\otimes_\O A$ of interior $P$-algebras such that 
$$({\rm id}_{S^{\rm op}}\otimes g)\circ g'=c_\lambda\circ(f\otimes {\rm id}_B )~~~{\rm and}~~~({\rm id}_S\otimes g')\circ g=c_\nu \circ (f\otimes {\rm id}_A)$$
for some $\lambda\in ((S^{\rm op}\otimes_\O S\otimes_\O B)^P)^\times$ and $\nu\in ((S^{\rm op}\otimes_\O S\otimes_\O A)^P)^\times$, where $c_\lambda$ is the inner automorphism of $S^{\rm op}\otimes_\O S\otimes_\O B$ induced by $\lambda$-conjugation and $c_\nu$ has an analogous meaning.
\end{lemma}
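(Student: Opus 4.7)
The plan is to apply Lemma~\ref{lemma: Puig lemma 7.19}, which says the local point of $P$ on $C:=S^{\rm op}\otimes_\O S\otimes_\O B$ is unique of multiplicity one, and then invoke Lemma~\ref{lemma:existence of h such that f=gh} to extract $g'$. Consider the two interior $P$-algebra embeddings
\[
\iota_1:=\mathrm{id}_{S^{\rm op}}\otimes g:S^{\rm op}\otimes_\O A\to C,\qquad \iota_2:=f\otimes\mathrm{id}_B:B\to C,
\]
with idempotents $e_1:=\iota_1(1)=1_{S^{\rm op}}\otimes g(1_A)$ and $e_2:=\iota_2(1_B)=f(1)\otimes 1_B$ in $C^P$, satisfying $\mathrm{Im}(\iota_i)=e_iCe_i$. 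By Lemma~\ref{lemma: Puig lemma 7.19}, $C(P)$ is a local $k$-algebra. Using Lemma~\ref{lemma:embbeding}(ii) together with $(S^{\rm op}\otimes_\O A)(P)\cong S(P)^{\rm op}\otimes_k A(P)\neq 0$ (via Lemmas~\ref{lemma:Puig 7.10} and~\ref{lemma: Puig 7.16}) and $B(P)\neq 0$, both $\br_P(e_1)$ and $\br_P(e_2)$ are nonzero idempotents in $C(P)$, hence both equal $1_{C(P)}$.

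Since $1_B$ is primitive local in $B^P$, Lemma~\ref{lemma:embbeding}(iii) gives that $e_2$ is primitive local in $C^P$. Decomposing $e_1=i_1+\cdots+i_n$ in $C^P$ into orthogonal primitives and using $\br_P(e_1)=1_{C(P)}$ together with the locality of $C(P)$, exactly one summand, say $i_1$, is local (the others lying in $\ker\br_P$); it is a primitive local idempotent with $i_1\leq e_1$. By uniqueness of the local point of $P$ on $C$, there is $\lambda\in(C^P)^\times$ with $\lambda e_2\lambda^{-1}=i_1$, and then $c_\lambda(e_2Ce_2)=i_1Ci_1\subseteq e_1Ce_1=\mathrm{Im}(\iota_1)$. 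Lemma~\ref{lemma:existence of h such that f=gh} applied to $c_\lambda\circ\iota_2$ and $\iota_1$ with $U:=\iota_1^{-1}(i_1Ci_1)$ produces an injective $\O$-algebra homomorphism $g':B\to S^{\rm op}\otimes_\O A$ satisfying $\iota_1\circ g'=c_\lambda\circ\iota_2$ and $\mathrm{Im}(g')=g'(1_B)\cdot(S^{\rm op}\otimes_\O A)\cdot g'(1_B)$; since each constituent respects the interior $P$-algebra structure and $\lambda\in C^P$ commutes with the image of $P$, the map $g'$ is an embedding of interior $P$-algebras, and the first identity holds by construction.

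For the second identity I would run the parallel argument using $g'$ in place of $g$. Setting $C':=S^{\rm op}\otimes_\O S\otimes_\O A$ and using the $P$-stable basis of $A$, Lemmas~\ref{lemma:Puig 7.10} and~\ref{lemma: commutative diagrams} yield $C'(P)\cong S(P)^{\rm op}\otimes_k S(P)\otimes_k A(P)$; substituting $\mathrm{id}_{S^{\rm op}}\otimes g'$ for $\mathrm{id}_{S^{\rm op}}\otimes g$ in the proof of Lemma~\ref{lemma: Puig lemma 7.19} forces $C'(P)$ to have a unique local point of multiplicity one. Repeating the preceding construction for the embeddings $f\otimes\mathrm{id}_A:A\to C'$ and $(\mathrm{id}_S\otimes g')\circ g:A\to C'$ (after identifying $S\otimes_\O S^{\rm op}\otimes_\O A\cong S^{\rm op}\otimes_\O S\otimes_\O A$ by swapping the first two factors) then yields the desired $\nu\in((C')^P)^\times$.

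The principal obstacle is this symmetric step: one must verify that the newly constructed embedding $g'$ drives the proof of Lemma~\ref{lemma: Puig lemma 7.19} through for $C'$, and must handle a possible residual interior $P$-algebra automorphism of $A$ produced by Lemma~\ref{lemma:existence of h such that f=gh} (which can be absorbed into $\nu$ if it is inner by an element $a\in(A^P)^\times$, by conjugating by the invertible lift $\phi(a)+1-\phi(1)\in(C')^\times$ as in the proof of Proposition~\ref{prop: matrix algebras unique embedding}).
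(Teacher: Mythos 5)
Your construction of $g'$ and verification of the first identity is correct and essentially the paper's argument. The minor variant — decomposing $e_1=({\rm id}_{S^{\rm op}}\otimes g)(1)$ directly into primitive orthogonal idempotents in $C^P$ and picking out the unique local summand, instead of first choosing a primitive local idempotent $i$ in $(S^{\rm op}\otimes_\O A)^P$ and pushing it forward — is a legitimate reorganization and gains nothing or loses nothing.

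The second identity is where a genuine gap appears. If you simply ``run the parallel argument'' for the pair of embeddings $f\otimes{\rm id}_A$ and $({\rm id}_S\otimes g')\circ g$ into $C'=S^{\rm op}\otimes_\O S\otimes_\O A$, what you obtain from Lemma~\ref{lemma:existence of h such that f=gh} is only
$({\rm id}_S\otimes g')\circ g = c_\nu\circ(f\otimes{\rm id}_A)\circ h$
for some automorphism $h$ of $A$ as an interior $P$-algebra. You are aware of this, but your proposed remedy — ``absorb $h$ into $\nu$ if it is inner by an element $a\in(A^P)^\times$'' — is exactly the point you would still have to prove, and there is no reason it should hold: $A$ is an arbitrary interior $P$-algebra satisfying the hypotheses of Theorem~\ref{theorem: Puig 7.2}, not a matrix algebra, and interior $P$-algebra automorphisms of $A$ need not be inner, let alone inner by an element fixed by $P$. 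The paper avoids this obstruction entirely by a different route: it tensors the \emph{already established} first identity by ${\rm id}_S$, uses the fact that $S\otimes_\O S^{\rm op}\cong\End_\O(S)$ is a matrix algebra so that Proposition~\ref{prop: matrix algebras unique embedding} together with the Skolem--Noether theorem and~\cite[Proposition 12.1]{Thevenaz} shows ${\rm id}_S\otimes f=c_{\nu'}\circ(f\otimes{\rm id}_S)$ as interior $P$-algebra embeddings, then precomposes with $g$ and finally invokes~\cite[Proposition 8.6 (a)]{Thevenaz} to cancel the outer embedding ${\rm id}_{S\otimes_\O S^{\rm op}}\otimes g$. This way the second identity is forced by the first, and the unknown automorphism $h$ never arises. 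You should replace the second half of your argument by this derivation (or supply a genuine proof that $h$ can be taken to be the identity, which your sketch does not).
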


\begin{proof}
Since $A$ has a $P$-stable $\O$-basis, by Lemma \ref{lemma:Puig 7.10}, we have $(S^{\rm op}\otimes_\O A)(P)\cong S(P)^{\rm op}\otimes_k A(P)\neq 0$. Hence by \ref{void:points} (ii), there exists a primitive local idempotent $i\in (S^{\rm op}\otimes_\O A)^P$. By Lemma \ref{lemma:embbeding} (iii), the image of $i$ under the embedding
$${\rm id_{\rm S^{\rm op}}}\otimes g: S^{\rm op}\otimes_\O A\to S^{\rm op}\otimes_\O S\otimes_\O B$$
is a primitive local idempotent in $(S^{\rm op}\otimes_\O S\otimes B)^P$. By Lemma \ref{lemma: Puig lemma 7.19}, $P$ has a unique local point (say $\delta$) on $S^{\rm op}\otimes_\O S\otimes B$. Hence $({\rm id_{\rm S^{\rm op}}}\otimes g)(i)\in \delta$. By Lemma \ref{lemma: Puig Lemma 7.18}, we have another embedding
$$f\otimes {\rm id}_B:B\xrightarrow{\cong} \O\otimes_\O B  \to S^{\rm op}\otimes_\O S\otimes_\O B$$
Since (by the assumption) $1_B$ is primitive in $B^P$, by Lemma \ref{lemma:embbeding} (iii), $(f\otimes {\rm id}_B)(1_B)$ is primitive in  $(S^{\rm op}\otimes_\O S\otimes B)^P$. This implies that $(f\otimes {\rm id}_B)(1_B)\in \delta$, and hence there is $\lambda\in ((S^{\rm op}\otimes_\O S\otimes_\O B)^P)^\times$ such that 
$$({\rm id_{\rm S^{\rm op}}}\otimes g)(i)=\lambda (f\otimes {\rm id}_B)(1_B) \lambda^{-1}.$$
So we have 
$${\rm Im}(c_\lambda\circ(f\otimes {\rm id}_B))=\lambda{\rm Im}(f\otimes {\rm id}_B)\lambda^{-1}=({\rm id}_{S^{\rm op}}\otimes g)(i(S^{\rm op}\otimes_\O A)i).$$
By Lemma \ref{lemma:existence of h such that f=gh} there exists an injective homomormorphism $g'$ of interior $P$-algebras making the diagram 
$$\xymatrix{ B  \ar[rr]^{c_\lambda\circ(f\otimes {\rm id}_B)~~~~~~} \ar@{-->}[rrd]_{g'} \ar@{-->}[d]_{g'} &   &  S^{\rm op} \otimes_\O S\otimes_\O B \\
 i(S^{\rm op}\otimes_\O A)i\ar@{^{(}->}[rr]  &   &  S^{\rm op}\otimes_\O A\ar[u]_{{\rm id}_{S^{\rm op}}\otimes g}}$$ 
   commutative. We have ${\rm Im}(g')=i(S^{\rm op}\otimes_\O A)i$, and hence $g'$ is an embedding, proving the first equality.
   
  To prove the second equality, we tensor the equality $({\rm id}_{S^{\rm op}}\otimes g)\circ g'=c_\lambda\circ(f\otimes {\rm id}_B )$ by ${\rm id}_S$ and obtain
  \begin{equation}\label{equation:Puig 7.20.6}
  ({\rm id}_{S\otimes_\O S^{\rm op}}\otimes g)\circ({\rm id}_S\otimes g')=({\rm id}_S\otimes c_\lambda)\circ({\rm id}_S\otimes f\otimes {\rm id}_B ).
  \end{equation}
  We have two embeddings
  $${\rm id}_S\otimes f:S\cong S\otimes_\O \O \xrightarrow{{\rm id}_S\otimes f} S\otimes_\O S^{\rm op}\otimes_\O S$$ 
  and 
  $$f\otimes {\rm id}_S:S\cong \O \otimes_\O S \xrightarrow{f\otimes{\rm id}_S} S\otimes_\O S^{\rm op}\otimes_\O S$$
  of interior $P$-algebras.
  Since the underlying $\O$-algebras $S$ and $S\otimes_{\O} S^{\rm op}\otimes_\O S$ are isomorphic to matrix algebras, by Proposition \ref{prop: matrix algebras unique embedding}, 
  there exists $\nu'\in (S\otimes_\O S^{\rm op}\otimes_\O S)^\times$ such that ${\rm id}_S\otimes f=c_{\nu'}\circ(f\otimes {\rm id}_S)$ as embeddings of $\O$-algebras. By \cite[Proposition 12.1]{Thevenaz}, we may choose such $\nu'$ in $((S\otimes_\O S^{\rm op}\otimes_\O S)^P)^\times$, and hence ${\rm id}_S\otimes f=c_{\nu'}\circ(f\otimes {\rm id}_S)$ as embeddings interior $P$-algebras. By precomposing each side of (\ref{equation:Puig 7.20.6}) with $g$, we obtain
  \begin{align*}
  	\begin{split}
  ({\rm id}_{S\otimes_\O S^{\rm op}}\otimes g)\circ({\rm id}_S\otimes g')\circ g &=({\rm id}_S\otimes c_\lambda)\circ({\rm id}_S\otimes f\otimes {\rm id}_B )\circ g\\
  &=({\rm id}_S\otimes c_\lambda)\circ(c_{\nu'}\otimes {\rm id}_B)\circ(f\otimes{\rm id}_S\otimes {\rm id}_B )\circ g\\
  &=({\rm id}_S\otimes c_\lambda)\circ(c_{\nu'}\otimes {\rm id}_B)\circ(f\otimes g)\\
  &=({\rm id}_S\otimes c_\lambda)\circ(c_{\nu'}\otimes {\rm id}_B)\circ({\rm id}_{S\otimes_\O S^{\rm op}}\otimes g)\circ(f\otimes {\rm id}_A)\\
  &=c_{\nu''}\circ({\rm id}_{S\otimes_\O S^{\rm op}}\otimes g)\circ(f\otimes {\rm id}_A)
  \end{split}
  \end{align*}
  where $\nu''=(1_S\otimes \lambda)\circ(\nu'\otimes 1_B)\in ((S\otimes_\O S^{\rm op}\otimes_\O S\otimes_\O B)^P)^\times$. Now by \cite[Proposition 12.2 (a)]{Thevenaz}, there exists $\nu\in ((S^{\rm op}\otimes_\O S\otimes_\O A)^P)^\times$, such that $({\rm id}_S\otimes g')\circ g=c_\nu \circ(f\otimes {\rm id}_A)$.
\end{proof}

\begin{lemma}[{\cite[Lemma 7.21]{Puig1999}}]\label{lemma: Puig 7.21}
Keep the notation of Theorem \ref{theorem: Puig 7.2}. Let $Q$ be any subgroup of $P$, set $\overline{S(Q)}=S(Q)/J(S(Q))$ and denote by $n(Q):S(Q)\to \overline{S(Q)}$ the canonical map and by $\overline{g(Q)}$ the composed $N_P(Q)$-algebra homomorphism
$$A(Q)\xrightarrow{g(Q)}(S\otimes_\O B)(Q)\xrightarrow{\alpha_{S,B}(Q)^{-1}} S(Q)\otimes_k B(Q)\xrightarrow{n(Q)\otimes {\rm id}_{B(Q)}} \overline{S(Q)}\otimes_k B(Q).$$
Then $\overline{g(Q)}$ is an embedding.
\end{lemma}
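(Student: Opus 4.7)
The plan is to verify the two defining conditions of a Puig embedding for $\overline{g(Q)}$, namely the image condition and injectivity. Each of the three composed maps has a clear structural role: $g(Q)$ is an embedding by Lemma~\ref{lemma:embbeding}(ii) applied to the embedding $g$; $\alpha_{S,B}(Q)^{-1}$ is an $N_P(Q)$-algebra isomorphism by Lemma~\ref{lemma:Puig 7.10}, since $B$ admits a $P$-stable $\O$-basis; and $n(Q)\otimes {\rm id}_{B(Q)}$ is a unital surjective $k$-algebra homomorphism with kernel $J(S(Q))\otimes_k B(Q)$.

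For the image condition, I would set $\t{g}(Q):=\alpha_{S,B}(Q)^{-1}\circ g(Q)$ and $e:=\t{g}(Q)(1_{A(Q)})$. Since $\t{g}(Q)$ is injective and $g(Q)$ is an embedding, the image of $\t{g}(Q)$ is $e\cdot (S(Q)\otimes_k B(Q))\cdot e$. Applying the unital surjective algebra homomorphism $n(Q)\otimes {\rm id}_{B(Q)}$ then sends $e\cdot(S(Q)\otimes_k B(Q))\cdot e$ to $\bar{e}\cdot (\overline{S(Q)}\otimes_k B(Q))\cdot \bar{e}$ with $\bar{e}=\overline{g(Q)}(1_{A(Q)})$, which is exactly the required image condition.

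For injectivity, I would first reduce $\ker(\overline{g(Q)})$ to a nilpotent ideal. For any $a\in\ker(\overline{g(Q)})$, the element $\t{g}(Q)(a)$ lies in $J(S(Q))\otimes_k B(Q)$. Since $S(Q)$ is finite-dimensional over $k$, its Jacobson radical is nilpotent, say $J(S(Q))^m=0$. Hence $\t{g}(Q)(a^m)=\t{g}(Q)(a)^m\in J(S(Q))^m\otimes_k B(Q)=0$, and the injectivity of $\t{g}(Q)$ forces $a^m=0$. Thus $\ker(\overline{g(Q)})$ is a nilpotent two-sided ideal of $A(Q)$, and in particular contained in $J(A(Q))$.

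The main obstacle is to upgrade this to $\ker(\overline{g(Q)})=0$. My approach would exploit the nondegenerate symmetric $k$-linear form $\mu(Q):A(Q)\to k$ from Lemma~\ref{lemma: Puig 7.16}, which makes $A(Q)$ a symmetric (hence Frobenius) algebra, together with the dual embedding $g':B\to S^{\rm op}\otimes_\O A$ from Lemma~\ref{lemma:Puig 7.20} and the compatibility identity $({\rm id}_S\otimes g')\circ g=c_\nu\circ (f\otimes {\rm id}_A)$. Applying the $Q$-Brauer functor to this identity and using the commutative diagrams of Lemma~\ref{lemma: commutative diagrams}, after reducing modulo $J(S(Q))$ on the appropriate tensor factor one should obtain a factorisation of the identity on $A(Q)$ through $\overline{g(Q)}$ up to inner automorphism by an invertible element in $(S^{\rm op}\otimes_\O S\otimes_\O A)(Q)$; combined with the nondegeneracy of the pairing $(x,y)\mapsto \mu(Q)(xy)$, this would force the nilpotent ideal $\ker(\overline{g(Q)})$ to vanish. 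The principal technical hurdle is tracking how conjugation by $c_\nu$ interacts with $J(S(Q))$ after the Brauer reduction, so that the retraction survives passage to the quotient $\overline{S(Q)}$.
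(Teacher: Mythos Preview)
Your image-condition argument is correct and indeed slightly more explicit than the paper, which simply asserts that it suffices to prove injectivity. Your reduction of $\ker(\overline{g(Q)})$ to a nilpotent ideal is also correct, though the paper does not use it.

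The genuine gap is in the injectivity step. You correctly identify the ingredients---the symmetric form $\mu(Q)$, the dual embedding $g'$, and the compatibility $({\rm id}_S\otimes g')\circ g=c_\nu\circ (f\otimes {\rm id}_A)$---and you correctly name the obstruction: after Brauer reduction, conjugation by $\br_Q(\nu)$ does not obviously respect passage to the quotient $\overline{S(Q)}$, and $\alpha_{S,S^{\rm op}}(Q)$ need not be an isomorphism since $S$ is not yet known to have a $P$-stable basis. But you do not supply the device that resolves this, and ``factorisation of the identity on $A(Q)$ through $\overline{g(Q)}$'' is not what one actually obtains.

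What the paper does is construct a \emph{trace form} $\varphi_Q:(S\otimes_\O S^{\rm op})(Q)\to k$, defined as the trace of the action on the simple module attached to the point of $(S\otimes_\O S^{\rm op})(Q)$ containing $f(Q)(1)$. This form has two properties that exactly dissolve your obstruction: (a) being a trace, it vanishes on nilpotent elements, hence $\varphi_Q\circ\alpha_{S,S^{\rm op}}(Q)$ kills $J(S(Q))\otimes_k S(Q)^{\rm op}$ and so descends to a form $\bar{\varphi}_Q$ on $\overline{S(Q)}\otimes_k S(Q)^{\rm op}$; and (b) being symmetric, $\varphi_Q\otimes\mu(Q)$ is invariant under the inner automorphism $c_{\nu_Q}$. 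Together with $\varphi_Q\circ f(Q)={\rm id}_k$ (from Lemma~\ref{lemma: primitive idempotent trace 1}), a diagram chase through Lemma~\ref{lemma: commutative diagrams} yields the key identity
\[
(\bar{\varphi}_Q\otimes\mu(Q))\circ\bigl({\rm id}_{\overline{S(Q)}}\otimes(\alpha_{S^{\rm op},A}(Q)^{-1}\circ g'(Q))\bigr)\circ\overline{g(Q)}=\mu(Q).
\]
Thus it is $\mu(Q)$, not ${\rm id}_{A(Q)}$, that factors through $\overline{g(Q)}$; since $\ker(\overline{g(Q)})$ is a two-sided ideal and $\mu(Q)$ is nondegenerate, this forces the kernel to vanish. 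Without the trace form $\varphi_Q$, your plan has no mechanism to pass the compatibility identity across the quotient by $J(S(Q))$ or to neutralise the conjugation $c_\nu$.
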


\begin{proof}
Since $g(Q)$ is an embedding (see Lemma \ref{lemma:embbeding} (ii)), it suffices to show that $\overline{g(Q)}$ is injective. Consider the embedding $f:\O\to S\otimes S^{\rm op}$ in Lemma \ref{lemma: Puig Lemma 7.18} (we identify $S\otimes_\O S^{\rm op}$ and $S^{\rm op}\otimes_\O S$). Again by Lemma \ref{lemma:embbeding} (ii) we obtain an embedding $f(Q):k\to (S\otimes_\O S)(Q)$. Since $1$ is a primitive idempotent in $k$, $f(Q)(1)$ is contained in a point of $(S\otimes_\O S^{\rm op})(Q)$; see \cite[Proposition 4.12 (a)]{Thevenaz}. Let $L$ be a simple $(S\otimes_\O S^{\rm op})(Q)$-module corresponding to this point; see \ref{void:points} (i). Denote by $\eta:(S\otimes_\O S^{\rm op})(Q)\to \End_k(L)$ the structure homomorphism of $L$. Denote by $\varphi_Q: (S\otimes_\O S^{\rm op})(Q)\to k$ the map sending $a$ to ${\rm tr}(\eta(a))$, the trace of the $k$-linear transformation $\eta(a)$, for any $a\in (S\otimes_\O S^{\rm op})(Q)$. By elementary linear algebra we see that $\varphi_Q(ab)=\varphi_Q(ba)$ for any $a,b\in(S\otimes_\O S^{\rm op})(Q)$ and $\varphi_Q$ vanishes on nilpotent element of $(S\otimes_\O S^{\rm op})(Q)$. By Lemma \ref{lemma: primitive idempotent trace 1}, we have $\varphi_Q(f(Q)(1))=1$ and hence 
\begin{equation}\label{equation: varphi_Q f(Q)=id}
\varphi_Q\circ f(Q)={\rm id}_k.
\end{equation}
In particular, considering the $k$-algebra homomorphism
$$\varphi_Q\circ \alpha_{S,S^{\rm op}}(Q):S(Q)\otimes_k S(Q)^{\rm op} \to (S\otimes_\O S^{\rm op})(Q)\to k,$$
we have $\varphi_Q(\alpha_{S,S^{\rm op}}(Q)(J(S(Q)\otimes_k S(Q)^{\rm op})))=0$, and therefore $\varphi_Q\circ \alpha_{S,S^{\rm op}}(Q)$ factors through a symmetric $k$-form 
$$\bar{\varphi}_Q:\overline{S(Q)}\otimes_kS(Q)^{\rm op}\to k.$$
In other words, we have $\varphi_Q\circ \alpha_{S,S^{\rm op}}(Q)=\bar{\varphi}_Q\circ (n(Q)\otimes {\rm id}_{S(Q)^{\rm op}})$.

Now we claim that
\begin{equation}\label{equation:Puig 7.21.6}
(\bar{\varphi}_Q\otimes \mu(Q))\circ({\rm id}_{\overline{S(Q)}}\otimes(\alpha_{S^{\rm op}, A}(Q)^{-1}\circ g'(Q)))\circ \overline{g(Q)}=\mu(Q)
\end{equation}
which implies that $\mu(Q)$ vanishes on $\ker(\overline{g(Q)})$, forcing $\ker(\overline{g(Q)})=0$ (see Lemma \ref{lemma: Puig 7.16}).
The rest of this proof is to prove the claim.

First, by the commutativity of the tensor product we have
\begin{align*}
\begin{split} 
&({\rm id}_{\overline{S(Q)}}\otimes (\alpha_{S^{\rm op},A}(Q)^{-1}\circ g'(Q)))\circ \overline{g(Q)}\\
&=({\rm id}_{\overline{S(Q)}}\otimes (\alpha_{S^{\rm op},A}(Q)^{-1}\circ g'(Q)))\circ (n(Q)\otimes {\rm id}_{B(Q)})\circ \alpha_{S^{\rm op},A}(Q)^{-1}\circ g(Q)\\
&=(n(Q)\otimes (\alpha_{S^{\rm op},A}(Q)^{-1}\circ g'(Q))) \circ \alpha_{S^{\rm op},A}(Q)^{-1}\circ g(Q)\\
&=(n(Q)\otimes {\rm id}_{S(Q)^{\rm op}}\otimes {\rm id}_{A(Q)})\circ ({\rm id}_{S(Q)}\otimes \alpha_{S^{\rm op},A}(Q)^{-1})\circ({\rm id}_{S(Q)}\otimes g'(Q))\circ \alpha_{S,B}(Q)^{-1}\circ g(Q).
\end{split}
\end{align*}
Consequently, the left side of (\ref{equation:Puig 7.21.6}) is equal to
$$((\bar{\varphi}_Q\circ(n(Q)\otimes {\rm id}_{S(Q)^{\rm op}}))\otimes \mu(Q))\circ  ({\rm id}_{S(Q)}\otimes \alpha_{S^{\rm op},A}(Q)^{-1})\circ({\rm id}_{S(Q)}\otimes g'(Q))\circ \alpha_{S,B}(Q)^{-1}\circ g(Q)$$
$$=((\varphi_Q\circ\alpha_{S,S^{\rm op}}(Q))\otimes \mu(Q)) \circ  ({\rm id}_{S(Q)}\otimes \alpha_{S^{\rm op},A}(Q)^{-1})\circ({\rm id}_{S(Q)}\otimes g'(Q))\circ \alpha_{S,B}(Q)^{-1}\circ g(Q)$$
$$=(\varphi_Q\otimes \mu(Q))\circ(\alpha_{S,S^{\rm op}}(Q)\otimes {\rm id}_{A(Q)})\circ  ({\rm id}_{S(Q)}\otimes \alpha_{S^{\rm op},A}(Q)^{-1})\circ({\rm id}_{S(Q)}\otimes g'(Q))\circ \alpha_{S,B}(Q)^{-1}\circ g(Q).$$
By Lemma \ref{lemma: commutative diagrams} (iv), we have 
$$\alpha_{S\otimes_\O S^{\rm op},A}(Q)\circ(\alpha_{S,S^{\rm op}}(Q)\otimes {\rm id}_{A(Q)})=\alpha_{S,S^{\rm op}\otimes_\O A}(Q)\circ ({\rm id}_{S(Q)}\otimes \alpha_{S^{\rm op},A}(Q)).$$
By Lemma \ref{lemma:Puig 7.10}, $\alpha_{S\otimes_\O S^{\rm op},A}(Q)$ and $\alpha_{S^{\rm op},A}(Q)$ are invertible, hence we have
$$(\alpha_{S,S^{\rm op}}(Q)\otimes {\rm id}_{A(Q)})\circ  ({\rm id}_{S(Q)}\otimes \alpha_{S^{\rm op},A}(Q)^{-1})=\alpha_{S\otimes_\O S^{\rm op},A}(Q)^{-1}\circ \alpha_{S,S^{\rm op}\otimes_\O A}(Q).$$
Now the left side of (\ref{equation:Puig 7.21.6}) becomes
$$(\varphi_Q\otimes \mu(Q))\circ\alpha_{S\otimes_\O S^{\rm op},A}(Q)^{-1}\circ \alpha_{S,S^{\rm op}\otimes_\O A}(Q)\circ({\rm id}_{S(Q)}\otimes g'(Q))\circ \alpha_{S,B}(Q)^{-1}\circ g(Q)$$
By the second commutative diagram in \ref{lemma: commutative diagrams} (i), we have
$$\alpha_{S,S^{\rm op}\otimes_\O A}(Q)\circ ({\rm id}_{S(Q)}\otimes g'(Q))\circ \alpha_{S,B}(Q)^{-1}=({\rm id}_S\otimes g')(Q).$$
Hence the left side of (\ref{equation:Puig 7.21.6}) becomes
$$(\varphi_Q\otimes \mu(Q))\circ \alpha_{S\otimes_\O S^{\rm op},A}(Q)^{-1}\circ  ({\rm id}_S\otimes g')(Q)  \circ g(Q).$$
By Lemma \ref{lemma:Puig 7.20}, there exists  $\nu\in ((S^{\rm op}\otimes_\O S\otimes_\O A)^P)^\times$ such that
$$({\rm id}_S\otimes g')\circ g=c_\nu \circ(f\otimes {\rm id}_A)$$
where $c_\nu$ is the inner automorphism of $S^{\rm op}\otimes_\O S\otimes_\O A$ induced by $\nu$-conjugation. By the second commutative diagram in Lemma \ref{lemma: commutative diagrams} (i), we have 
$$\alpha_{S\otimes_\O S^{\rm op},A}(Q)\circ (f(Q)\otimes {\rm id}_{A(Q)})=(f\otimes {\rm id}_A)(Q)\circ \alpha_{\O,A}(Q)=(f\otimes {\rm id}_A)(Q)$$
because $\alpha_{\O,A}(Q)={\rm id}_{A(Q)}$. Consequently, the left side of (\ref{equation:Puig 7.21.6}) equals to
\begin{align*}
	\begin{split}
	&(\varphi_Q\otimes \mu(Q))\circ \alpha_{S\otimes_\O S^{\rm op},A}(Q)^{-1}\circ  ({\rm id}_S\otimes g')(Q)  \circ g(Q)\\
	&=(\varphi_Q\otimes \mu(Q))\circ \alpha_{S\otimes_\O S^{\rm op},A}(Q)^{-1}\circ (c_\nu \circ (f\otimes {\rm id}_A))(Q)\\
	&=(\varphi_Q\otimes \mu(Q))\circ \alpha_{S\otimes_\O S^{\rm op},A}(Q)^{-1}\circ c_{\br_Q(\nu)} \circ (f\otimes {\rm id}_A)(Q)\\
	&=(\varphi_Q\otimes \mu(Q))\circ c_{\nu_Q}\circ\alpha_{S\otimes_\O S^{\rm op},A}(Q)^{-1} \circ (f\otimes {\rm id}_A)(Q)\\
	&=(\varphi_Q\otimes \mu(Q))\circ c_{\nu_Q}\circ (f(Q)\otimes {\rm id}_{A(Q)})
	\end{split}
\end{align*}
where $\nu_Q=\alpha_{S\otimes_\O S^{\rm op},A}(Q)^{-1}(\br_Q(\nu))$. Since $\varphi_Q\otimes \mu(Q)$ is a symmetric $k$-form over $(S\otimes_\O S^{\rm op})(Q)\otimes_k A(Q)$, we have
$(\varphi_Q\otimes \mu(Q))\circ c_{\nu_Q}=(\varphi_Q\otimes \mu(Q))$. Hence the left side of (\ref{equation:Puig 7.21.6}) becomes
$$(\varphi_Q\otimes \mu(Q))\circ (f(Q)\otimes {\rm id}_{A(Q)})=(\varphi_Q\circ f(Q))\otimes \mu(Q)={\rm id}_k\otimes \mu(Q)=\mu(Q);$$
where the second equality holds by (\ref{equation: varphi_Q f(Q)=id}). This proves the claim.
\end{proof}

\begin{lemma}[{\cite[7.22.4]{Puig1999}}]
Keep the notation of Theorem \ref{theorem: Puig 7.2}. Let $Q$ be a proper subgroup of $P$ and $R$ a subgroup of $N_P(Q)$ properly containing $Q$. Denote by $\overline{\alpha_S(Q,R)}$ the composed homomorphism
$$S(R)\xrightarrow{\alpha_S(Q,R)} S(Q)(R)\xrightarrow{n(Q)(R)}\overline{S(Q)}(R).$$ Let $\overline{g(Q)}$ be defined as in Lemma \ref{lemma: Puig 7.21}.
Then we have
\begin{equation}\label{equation: Puig 7.22.4}
	\overline{g(Q)}(R)\circ \alpha_A(Q,R)=\alpha_{\overline{S(Q)},B(Q)}(R)\circ(\overline{\alpha_S(Q,R)}\otimes \alpha_B(Q,R))\circ\alpha_{S,B}(R)^{-1}\circ g(R).
\end{equation}
\end{lemma}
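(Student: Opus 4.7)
The proof of (\ref{equation: Puig 7.22.4}) will be a purely formal diagram chase, built out of the commutative diagrams in Lemma \ref{lemma: commutative diagrams} together with the invertibility of the $\alpha$-maps provided by Lemma \ref{lemma:Puig 7.10}. The plan is to start from the right-hand side and transform it, in four short steps, into $\overline{g(Q)}(R)\circ \alpha_A(Q,R)$.

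First, I would decompose $\overline{\alpha_S(Q,R)}\otimes \alpha_B(Q,R)$ as the composition $(n(Q)(R)\otimes {\rm id}_{B(Q)(R)})\circ (\alpha_S(Q,R)\otimes \alpha_B(Q,R))$, and use the second square of Lemma \ref{lemma: commutative diagrams}(i), applied to the pair $(n(Q),{\rm id}_{B(Q)})$ with $P$ replaced by $R$, to rewrite
\[
\alpha_{\overline{S(Q)},B(Q)}(R)\circ (n(Q)(R)\otimes {\rm id}_{B(Q)(R)}) = (n(Q)\otimes {\rm id}_{B(Q)})(R)\circ \alpha_{S(Q),B(Q)}(R).
\]
Second, I would invoke diagram (ii) of Lemma \ref{lemma: commutative diagrams} with $A=S$ and the pair $(P,Q)$ relabelled as $(Q,R)$; since $\alpha_{S,B}(Q)$ (hence its $R$-Brauer image) is invertible, this expresses $\alpha_{S(Q),B(Q)}(R)\circ (\alpha_S(Q,R)\otimes \alpha_B(Q,R))$ as $(\alpha_{S,B}(Q))(R)^{-1}\circ \alpha_{S\otimes_\O B}(Q,R)\circ \alpha_{S,B}(R)$, and once this is substituted, the trailing factor $\alpha_{S,B}(R)\circ \alpha_{S,B}(R)^{-1}$ cancels. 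Third, the first square of Lemma \ref{lemma: commutative diagrams}(i), applied to $g$ itself, replaces $\alpha_{S\otimes_\O B}(Q,R)\circ g(R)$ by $g(Q)(R)\circ \alpha_A(Q,R)$. Fourth, applying the $R$-Brauer functor to the defining factorisation $\overline{g(Q)} = (n(Q)\otimes {\rm id}_{B(Q)})\circ \alpha_{S,B}(Q)^{-1}\circ g(Q)$ of Lemma \ref{lemma: Puig 7.21} yields
\[
\overline{g(Q)}(R) = (n(Q)\otimes {\rm id}_{B(Q)})(R)\circ (\alpha_{S,B}(Q))(R)^{-1}\circ g(Q)(R),
\]
so that what remains on the right is precisely $\overline{g(Q)}(R)\circ \alpha_A(Q,R)$.

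I do not anticipate a real obstacle: the whole argument is bookkeeping. The only point that warrants a line of justification is that $n(Q)$, and hence $n(Q)\otimes {\rm id}_{B(Q)}$, is a homomorphism of $N_P(Q)$-algebras, so that the $R$-Brauer functor can legitimately be applied to it; this holds because the Jacobson radical $J(S(Q))$ is a characteristic ideal of $S(Q)$, and is therefore stable under the $N_P(Q)$-conjugation action.
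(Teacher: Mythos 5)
Your proposal is correct and follows essentially the same route as the paper's proof: both use the two squares of Lemma \ref{lemma: commutative diagrams}(i), diagram (ii) relabelled as $(Q,R)$, and the invertibility from Lemma \ref{lemma:Puig 7.10}, differing only in that you manipulate the right-hand side toward the left rather than vice versa. The remark that $J(S(Q))$ is a characteristic ideal, ensuring $n(Q)$ is an $N_P(Q)$-algebra homomorphism, is a detail the paper leaves implicit and is worth keeping.
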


\begin{proof}
According to the definition of $\overline{g(Q)}$ (see Lemma \ref{lemma: Puig 7.21}), we have 
$$\overline{g(Q)}(R)=(n(Q)\otimes{\rm id}_{B(Q)})(R)\circ \alpha_{S,B}(Q)(R)^{-1}\circ g(Q)(R).$$
By Lemma \ref{lemma: commutative diagrams} (i), we have 
\begin{equation}\label{equation: Puig 7.22.6 (i)}
	g(Q)(R)\circ \alpha_A(Q,R)=\alpha_{S\otimes_\O B}(Q,R)\circ g(R)
\end{equation}
and 
\begin{equation}\label{equation: Puig 7.22.6 (ii)}
	(n(Q)\otimes{\rm id}_{B(Q)})(R)\circ \alpha_{S(Q),B(Q)}(R)=\alpha_{\overline{S(Q)},B(Q)}(R)\circ(n(Q)(R)\otimes {\rm id}_{B(Q)(R)}).
\end{equation}
By Lemma \ref{lemma: commutative diagrams} (ii), we have 
\begin{equation}\label{equation: Puig 7.22.7}
	\alpha_{S,B}(Q)(R)\circ \alpha_{S(Q),B(Q)}(R)\circ(\alpha_S(Q,R)\otimes\alpha_B(Q,R))=\alpha_{S\otimes_\O B}(Q,R)\circ \alpha_{S,B}(R).
\end{equation}
Hence the left side of (\ref{equation: Puig 7.22.4}) is equal to 
\begin{align*}
	\begin{split}
		&(n(Q)\otimes{\rm id}_{B(Q)})(R)\circ \alpha_{S,B}(Q)(R)^{-1}\circ g(Q)(R)\circ \alpha_A(Q,R)\\
		&=(n(Q)\otimes{\rm id}_{B(Q)})(R)\circ \alpha_{S,B}(Q)(R)^{-1}\circ \alpha_{S\otimes_\O B}(Q,R)\circ g(R)\\
		&=(n(Q)\otimes{\rm id}_{B(Q)})(R)\circ  \alpha_{S(Q),B(Q)}(R)\circ(\alpha_S(Q,R)\otimes\alpha_B(Q,R))\circ \alpha_{S,B}(R)^{-1}  \circ g(R)\\
		&=\alpha_{\overline{S(Q)},B(Q)}(R)\circ(n(Q)(R)\otimes {\rm id}_{B(Q)(R)}) \circ(\alpha_S(Q,R)\otimes\alpha_B(Q,R))\circ \alpha_{S,B}(R)^{-1}  \circ g(R)\\
		&=\alpha_{\overline{S(Q)},B(Q)}(R)\circ(\overline{\alpha_S(Q,R)}\otimes \alpha_B(Q,R))\circ\alpha_{S,B}(R)^{-1}\circ g(R),
	\end{split}
\end{align*}
as claimed; here the first equality holds by (\ref{equation: Puig 7.22.6 (i)}), the second by (\ref{equation: Puig 7.22.7}) and Lemma \ref{lemma:Puig 7.10}, and the third by (\ref{equation: Puig 7.22.6 (ii)}).
\end{proof}

\begin{lemma}[{\cite[Lemma 7.22]{Puig1999}}]\label{lemma: Puig 7.22}
Keep the notation of Theorem \ref{theorem: Puig 7.2}. Any subgroup $Q$ of $P$ has a unique local point on $S$.
\end{lemma}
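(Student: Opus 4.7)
\emph{Proof plan.} I proceed by downward induction on $|Q|$.

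\textbf{Base case $Q=P$.} Since $V$ is indecomposable as an $\O P$-module, $S^P=\End_{\O P}(V)$ is local, so $1_S$ is its unique primitive idempotent. By Lemma~\ref{lemma: Puig 7.16}, $S(P)\neq 0$, hence $\br_P^S(1_S)=1_{S(P)}\neq 0$, and the point $\{1_S\}$ is the unique local point of $P$ on $S$.

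\textbf{Inductive step.} Suppose $Q\lneq P$ and the lemma holds for every subgroup $R$ with $Q\lneq R\leq P$; equivalently, $\overline{S(R)}$ is a split simple $k$-algebra for each such $R$. Since $Q$ is a proper subgroup of the $p$-group $P$, I may choose $R$ with $Q\lhd R\leq N_P(Q)$ and $[R:Q]=p$, so $\overline{S(R)}$ is simple by induction. My goal is to show that $\overline{S(Q)}$ is simple.

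Applying the $R$-Brauer functor to the embedding $\overline{g(Q)}:A(Q)\to\overline{S(Q)}\otimes_k B(Q)$ of Lemma~\ref{lemma: Puig 7.21}, and invoking the isomorphisms $\alpha_A(Q,R)$, $\alpha_B(Q,R)$, $\alpha_{S,B}(R)$ supplied by Lemma~\ref{lemma:Puig 7.10} (available because $A$ and $B$ have $P$-stable $\O$-bases), the commutative identity~\eqref{equation: Puig 7.22.4} produces an embedding
\[
A(R)\hookrightarrow\overline{S(Q)}(R)\otimes_k B(Q)(R)
\]
whose image lies inside $\overline{\alpha_S(Q,R)}(S(R))\otimes_k\alpha_B(Q,R)(B(R))$. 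An $R$-level analogue of the calculation made in the proof of Lemma~\ref{lemma: Puig 7.21} -- with $\mu(R)$ (nondegenerate by Lemma~\ref{lemma: Puig 7.16}) in place of $\mu(Q)$, and with the simplicity of $\overline{S(R)}$ from the induction hypothesis forcing the image of $S(R)$ under $\overline{\alpha_S(Q,R)}$ into a single simple summand of $\overline{S(Q)}(R)$ -- shows that $\overline{S(Q)}(R)$ itself has a unique point. Proposition~\ref{prop: unique P stable point} applied to the split semisimple $R/Q$-algebra $\overline{S(Q)}$ then yields a unique $R/Q$-stable point $\gamma_R\in\P(\overline{S(Q)})$.

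Letting $R$ range over the index-$p$ subgroups of $N_P(Q)$ above $Q$ and running the same argument with the preimage in $N_P(Q)$ of the join of any two such $R/Q$ (still strictly containing $Q$, hence covered by the induction hypothesis), the points $\gamma_R$ must all coincide in a common $N_P(Q)/Q$-fixed point $\gamma_0$. Any hypothetical further point $\gamma\neq\gamma_0$ would lie in a free $N_P(Q)/Q$-orbit, so its central idempotent would vanish in $\overline{S(Q)}(T)$ for every $T$ with $Q\lneq T\leq N_P(Q)$; combined with the fact that the embedding $\overline{g(Q)}$ hits every component of the target after passing to the $R$-Brauer quotient (as dictated by the nondegeneracy of $\mu(R)$), this is incompatible with the uniqueness of $\gamma_R$. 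Hence $\overline{S(Q)}$ has the unique point $\gamma_0$, completing the induction.

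\textbf{Main obstacle.} The critical technical step is proving that $\overline{S(Q)}(R)$ has a unique point. This requires a careful $R$-level analogue of the diagram chase in Lemma~\ref{lemma: Puig 7.21}, using the simplicity of $\overline{S(R)}$ from the induction hypothesis to pin down the image of the embedding $A(R)\hookrightarrow\overline{S(Q)}(R)\otimes_k B(Q)(R)$ and the nondegenerate symmetric form $\mu(R)$ to rule out any splitting of $\overline{S(Q)}(R)$ into multiple simple summands; the subsequent orbit argument descending uniqueness from $\overline{S(Q)}(R)$ to $\overline{S(Q)}$ is then a comparatively formal consequence.
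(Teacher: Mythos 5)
Your overall framework (induction on $|P:Q|$, showing that $\overline{S(Q)}(R)$ has a unique point, then deducing a distinguished $\bar N_P(Q)$-stable point and ruling out others) matches the paper's strategy, and your base case and the join-of-index-$p$-subgroups trick for independence of $R$ are fine. However, there are two genuine gaps.

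First, the step showing $\overline{S(Q)}(R)$ has a unique point is not correctly argued. You say the simplicity of $\overline{S(R)}$ forces the image of $S(R)$ under $\overline{\alpha_S(Q,R)}$ into a single simple summand. But $\overline{\alpha_S(Q,R)}$ does not factor through $\overline{S(R)}=S(R)/J(S(R))$, so simplicity of the latter says nothing directly about the image; and even if the image did land in one simple summand, $\overline{S(Q)}(R)$ could a priori have further simple summands. The paper instead derives equation~\eqref{equation: Puig 7.22.4}, rewrites it as \eqref{equation:Puig 7.22.8}, and observes that the right-hand side is an embedding; then a primitive idempotent $i\in A(R)$ maps to a primitive idempotent in $\overline{S(Q)}(R)\otimes_k B(Q)(R)$, which by Proposition~\ref{prop:bijection between points} splits as $\bar l\otimes \bar j$, forcing $\overline{\alpha_S(Q,R)}$ to send \emph{some} primitive idempotent of $S(R)$ to a primitive one; since $S(R)$ has a unique point (induction), and $\overline{\alpha_S(Q,R)}$ is unitary, it sends a primitive orthogonal decomposition of $1_{S(R)}$ to one of $1_{\overline{S(Q)}(R)}$ with all terms conjugate, which is what yields uniqueness of the point of $\overline{S(Q)}(R)$. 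Your appeal to $\mu(R)$ here is also misplaced: nondegeneracy of $\mu$ was already consumed in Lemma~\ref{lemma: Puig 7.21} to make $\overline{g(Q)}$ an embedding, and is not re-used at the $R$-level in this step.

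Second, and more seriously, the closing step of your induction is missing the key ingredient. You correctly observe that any hypothetical point $\gamma\neq\gamma_0$ must lie in a free $\bar N_P(Q)$-orbit, so the sum of the idempotents in its orbit lies in $\bigcap_{1\neq\bar R\leq \bar N_P(Q)}\ker(\br_{\bar R}^{Z(\overline{S(Q)})})$. But that by itself is not a contradiction: it only places the idempotent $1-e_{\gamma_0}$ in a certain ideal, and ``incompatible with the uniqueness of $\gamma_R$'' does not explain why that ideal contains no nonzero idempotents. The paper's actual argument invokes Lemma~\ref{lemma:intersection of kernels of Brauer homomorphisms}~(iv) to identify this intersection with $Z(\overline{S(Q)})_1^{\bar N_P(Q)}$, traces it back to $n(Q)(\br_Q^S(S_Q^P))$, and then uses the hypothesis that $V$ is indecomposable with vertex $P$: the local ring $S^P$ has $S_Q^P$ as a proper ideal (else $V$ would be $Q$-projective), so $S_Q^P$ contains no nonzero idempotent, forcing $1-e_{\gamma_0}=0$. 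This appeal to the vertex of $V$ is essential and absent from your argument; without it the induction does not close.
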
	

\begin{proof}
We will continue to use the notation in Lemma \ref{lemma: Puig 7.21}. We argue by induction on $|P:Q|$. Since $V$ is an indecomposable $\O P$-module with vertex $P$, the statement holds for $Q=P$, hence we may assume that $Q<P$. Let $R$ be a subgroup of $N_P(Q)$ strictly containing $Q$, and set $\bar{R}=R/Q$.
By the inductive hypothesis, $R$ has a unique local point $\delta$ on $S$, and hence $\br_R(\delta)$ is the unique point of $S(R)$ (see \ref{void:points} (ii)). Then by the isomorphism (\ref{equation:direct product decomposition}) and Lemma \ref{lemma:local point isomorphism}, we have
$$S(R)/J(S(R))\cong S(R)(\br_R(\delta))\cong S^R(\delta).$$
Again by the inductive hypothesis, $N_P(Q)$ has a unique local point $\epsilon$ on $S$. By Lemma \ref{lemma: Puig 7.17}, we have 
\begin{equation}\label{equation:Puig 7.22.3}
R_\delta\leq N_P(Q)_\epsilon.
\end{equation}

From (\ref{equation: Puig 7.22.4}) and Lemma \ref{lemma:Puig 7.10} we obtain
\begin{equation}\label{equation:Puig 7.22.8}
\alpha_{\overline{S(Q)},B(Q)}(R)^{-1}\circ\overline{g(Q)}(R)\circ \alpha_A(Q,R)=(\overline{\alpha_S(Q,R)}\otimes \alpha_B(Q,R))\circ\alpha_{S,B}(R)^{-1}\circ g(R).
\end{equation}
Since $\overline{g(Q)}(R)$ is an embedding (see Lemmas \ref{lemma: Puig 7.21} and \ref{lemma:embbeding} (ii)), the left side of (\ref{equation:Puig 7.22.8}) is an embedding, so the right side is an embedding as well. Hence if $i$ is a primitive idempotent of $A(R)$, the idempotent 
$$\bar{\iota}=(\overline{\alpha_S(Q,R)}\otimes \alpha_B(Q,R))(\alpha_{S,B}(R)^{-1}(g(R)(i)))$$ 
is primitive in $\overline{S(Q)}(R)\otimes_k B(Q)(R)$; see Lemma \ref{lemma:embbeding} (iii). On the other hand, the idempotent $\alpha_{S,B}(R)^{-1}(g(R)(i))$ is primitive in $S(R)\otimes_k B(R)$ (see Lemma \ref{lemma:embbeding} (iii)), therefore, by Proposition \ref{prop:bijection between points},
$$\alpha_{S,B}(R)^{-1}(g(R)(i))=a(l\otimes j)a^{-1}$$
for suitable primitive idempotents $l\in S(R)$ and $j\in B(R)$, and a suitable $a\in (S(R)\otimes_k B(R))^\times$. Setting $\bar{l}=\overline{\alpha_S(Q,R)}(l)$ and $\bar{j}=\alpha_B(Q,R)(j)$ and $\bar{a}=(\overline{\alpha_S(Q,R)}\otimes \alpha_B(Q,R))(a)$, then we obtain $\bar{\iota}=\bar{a}(\bar{l}\otimes \bar{j})\bar{a}^{-1}$. Hence $\bar{l}\otimes \bar{j}$ is primitive in $\overline{S(Q)}(R)\otimes_k B(Q)(R)$, which forces $\bar{l}$ to be primitive in $\overline{S(Q)}(R)$ (see Proposition \ref{prop:bijection between points}).

In conclusion, the $k$-algebra homomorphism $\overline{\alpha_S(Q,R)}$ maps at least one primitive idempotent $l$ of $S(R)$ to a primitive idempotent $\bar{l}$ of $\overline{S(Q)}(R)$. Since $S(R)$ has the unique point $\br_R(\delta)$, all primitive idempotents of $S(R)$ are conjugate to $l$, hence $\overline{\alpha_S(Q,R)}$ maps any primitive idempotent of $S(R)$ to a primitive idempotent of $\overline{S(Q)}(R)$. Since $\overline{\alpha_S(Q,R)}$ is unitary, $\overline{\alpha_S(Q,R)}$ maps bijectively a pairwise orthogonal primitive idempotent decomposition of the unity element of $S(R)$ to such a decomposition in $\overline{S(Q)}(R)$. This implies that: 
(1). $\overline{S(Q)}(R)$ has a unique point and consequently, $R$ has a unique local point $\bar{\delta}$ on $\overline{S(Q)}$; see \ref{void:points} (ii). (2). The multiplicity $m_{\br_R(\delta)}$ of $\br_R(\delta)$ on $S(R)$ equals to the multiplicity $m_{\br_R(\bar{\delta})}$ of $\br_R(\bar{\delta})$ on $\overline{S(Q)}(R)$. Equivalently, denoting by $m_\delta$ the multiplicity  of $\delta$ on $S^R$ and by $m_{\bar{\delta}}$ the multiplicity of $\bar{\delta}$ on $\overline{S(Q)}^R$, we have
\begin{equation*}
m_\delta=m_{\bar{\delta}};
\end{equation*}
see Lemma \ref{lemma:local point isomorphism}. Consider $S^Q$, $S(Q)$ and $\overline{S(Q)}$ as $\bar{N}_P(Q)$-algebras, where $\bar{N}_P(Q):=N_P(Q)/Q$ . Then $\delta$ is still a local point of $\bar{R}$ on $S^Q$ and $\bar{\delta}$ is still a local point of $\bar{R}$ on $\overline{S(Q)}$. 
Denote by $F$ the composed $\bar{N}_P(Q)$-algebra homomorphism
$S^Q\xrightarrow{\br_Q^S} S(Q)\xrightarrow{n(Q)}\overline{S(Q)}.$ By definition, we have the following commutative diagram:
$$\xymatrix{ \delta\in S^R=(S^Q)^R\ar[rr]^{~~~~~(\br_Q^{S})^R}  \ar@/^2pc/[rrrr]^{F^R} \ar[d]^{\br_R^S} & & S(Q)^R\ar[rr]^{n(Q)^R} \ar[d]^{\br_R^{S(Q)}} & &  \overline{S(Q)}^R\ar[d]^{\br_R^{\overline{S(Q)}}}\ni \bar{\delta}\\ 
	S(R)\ar[rr]^{\alpha_S(Q,R)}  & & S(Q)(R)  \ar[rr]^{n(Q)(R)}  & &  \overline{S(Q)}(R)  
}$$ 
Hence we have 
\begin{equation}\label{equation:relations of delta and bar delta}
\br_R^{\overline{S(Q)}}(F^R(\delta))\subseteq\br_R^{\overline{S(Q)}}(\bar{\delta}).
\end{equation}
By (\ref{equation:direct product decomposition}) and Lemma \ref{lemma:local point isomorphism}, we have
\begin{equation}\label{equation: Puig 7.22.13}
 \overline{S(Q)}\cong\oplus_{\pi'\in \LP_S(Q)}S^Q(\pi')
 \end{equation} 
 as $N_P(Q)$-algebras.
Since $\overline{S(Q)}(R)=\overline{S(Q)}(\bar{R})$ has a unique point, by Proposition \ref{prop: unique P stable point} (i), there is a unique $\bar{R}$-stable point $\pi\in \LP_S(Q)$ such that 
$$(S^Q(\pi))(\bar{R})\cong \overline{S(Q)}(\bar{R}).$$ 
By the uniqueness of $\pi$, we see that $F(\pi)$ is the unique point of $1$ on $\overline{S(Q)}$ such that $1_{F(\pi)}\leq \bar{R}_{\bar{\delta}}$.

We claim that $\pi$ does not depend on the choice of $R$. Denote by $\bar{\epsilon}$ the unique local point of $N_P(Q)$ on $\overline{S(Q)}$. It suffices to show that $\bar{R}_{\bar{\delta}}\leq \bar{N}_P(Q)_{\bar{\epsilon}}$, because in that case $F(\pi)$ is the unique point on $\overline{S(Q)}$ such that  $1_{F(\pi)}\leq \bar{N}_P(Q)_{\bar{\epsilon}}$. We regard $F$ as a homomorphism of $\bar{N}_P(Q)$-algebras. By the discussion in the last paragraph, for any subgroup $Z$ of $\bar{N}_P(Q)$ and any local point $\bar{\xi}$ of $Z$ on $\overline{S(Q)}$, there is a point $\xi$ of $Z$ on $S^Q$ such that 
$$\br_Z^{\overline{S(Q)}}(F^Z(\xi))\subseteq\br_Z^{\overline{S(Q)}}(\bar{\xi})~~~{\rm and}~~~m_\xi=m_{\bar{\xi}},$$
where $m_\xi$ is the multiplicity of $\xi$ on $(S^Q)^Z$ and $m_{\bar{\xi}}$ is the multiplicity of $\bar{\xi}$ on $\overline{S(Q)}^Z$.
By (\ref{equation: f(P)}), $F(Z)(\br_Z^{S^Q}(\xi))=\br_Z^{\overline{S(Q)}}(F^Z(\xi))\subseteq\br_Z^{\overline{S(Q)}}(\bar{\xi})$. By Lemma \ref{lemma:local point isomorphism} and the equality $m_\xi=m_{\bar{\xi}}$, the multiplicity of $\br_Z^{S^Q}(\xi)$ on $S^Q(Z)$ equals to the multiplicity of $\br_Z^{\overline{S(Q)}}(\bar{\xi})$ on $\overline{S(Q)}$. Now by \cite[Proposition 25.3]{Thevenaz}, the $k$-algebra homomorphism $F(Z)$ is a covering homomorphism. Since $Z$ runs over all subgroups of $\bar{N}_P(Q)$, by \cite[Theorem 25.9]{Thevenaz}, $F$ is a covering homomorphism of $\bar{N}_P(Q)$-algebras. Since $R_\delta\leq N_P(Q)_\epsilon$ (see (\ref{equation:Puig 7.22.3})), by \cite[Proposition 25.6 (b)]{Thevenaz}, we have $\bar{R}_{\bar{\delta}}\leq \bar{N}_P(Q)_{\bar{\epsilon}}$, as claimed.

Now we are ready to prove that $\pi$ is the unique local point of $Q$ on $S$. In the isomorphism (\ref{equation: Puig 7.22.13}), let $e\in \overline{S(Q)}$ be the element corresponding to the unity element of $S^Q(\pi)$. Since $\bar{R}$ fixes $\pi$, it fixes $e$. Now by Proposition \ref{prop: unique P stable point} (ii), $\br_{\bar{R}}^{Z(\overline{S(Q)})}(1-e)=0$. Since $R$ runs over all subgroups of $N_P(Q)$ properly containing $Q$, we obtain 
$$1-e\in \bigcap_{1\neq\bar{R}\leq \bar{N}_P(Q)}\ker(\br_{\bar{R}}^{Z(\overline{S(Q)})}).$$
Since $Z(\overline{S(Q)})$ has an $\bar{N}_P(Q)$-stable $k$-basis (see Lemma \ref{lemma: Z(A) has stable basis}), by Lemma \ref{lemma:intersection of kernels of Brauer homomorphisms} (iv), we have
$$1-e\in Z(\overline{S(Q)})_1^{\bar{N}_P(Q)}\subseteq \overline{S(Q)}_1^{\bar{N}_P(Q)}=n(Q)(S(Q)_1^{\bar{N}_P(Q)})=n(Q)(\br_Q^S(S_Q^P));$$
see e.g. \cite[Proposition 5.4.5]{Lin18a} for the last equality. But since $V$ is an indecomposable $\O P$-module and since $S(P)\neq 0$, zero is the unique idempotent in $S_Q^P$. Hence $e=1$ and $\LP_S(Q)=\{\pi\}.$
\end{proof}

\begin{lemma}[{\cite[Lemma 7.23]{Puig1999}}]\label{lemma: Puig Lemma 7.23}
Keep the notation of Theorem \ref{theorem: Puig 7.2}. For any subgroup $Q$ of $P$, the embedding $g$ induces a bijection between the sets $\LP_A(Q)$ and $\LP_{S\otimes_\O B}(Q)$.
\end{lemma}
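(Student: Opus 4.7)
The plan is to use the bijection $\LP_A(Q)\cong\P(A(Q))$ from \ref{void:points}~(ii) (and likewise for $S\otimes_\O B$) to reduce the statement to proving that the map on points induced by $g(Q)$ is a bijection. Injectivity will come directly from $g(Q)$ being an embedding, while surjectivity will be obtained by a cardinality argument that exploits the ``reverse'' embedding $g':B\to S^{\rm op}\otimes_\O A$ constructed in Lemma \ref{lemma:Puig 7.20}.

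By Lemma \ref{lemma:embbeding}~(ii), $g(Q):A(Q)\to(S\otimes_\O B)(Q)$ is an embedding of $k$-algebras. Combined with Lemma \ref{lemma:embbeding}~(iii) and the standard fact (cf.\ \cite[Proposition 8.6]{Thevenaz}, already invoked in the proof of Lemma \ref{lemma:Puig 7.20}) that two primitive idempotents of $A(Q)$ which become conjugate in the codomain are already conjugate in $A(Q)$, this yields a well-defined injection $\P(A(Q))\hookrightarrow\P((S\otimes_\O B)(Q))$. Since $B$ admits a $Q$-stable $\O$-basis, Lemma \ref{lemma:Puig 7.10} identifies $(S\otimes_\O B)(Q)$ with $S(Q)\otimes_k B(Q)$; together with Lemma \ref{lemma: Puig 7.22} (the unique local point of $Q$ on $S$) and Proposition \ref{prop:bijection between points}, this gives $|\P((S\otimes_\O B)(Q))|=|\P(B(Q))|$, so that $|\P(A(Q))|\leq|\P(B(Q))|$.

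Running the same recipe with $g'(Q):B(Q)\to(S^{\rm op}\otimes_\O A)(Q)$ -- noting that $A$ has a $Q$-stable $\O$-basis and that $S(Q)^{\rm op}$ still has a unique point, since points are unaffected by passing to the opposite algebra -- produces the reverse inequality $|\P(B(Q))|\leq|\P(A(Q))|$. Equality then forces the injection induced by $g(Q)$ to be a bijection, which translates via \ref{void:points}~(ii) into the desired bijection $\LP_A(Q)\cong\LP_{S\otimes_\O B}(Q)$. The main obstacle is recognising that Lemma \ref{lemma:Puig 7.20} is the right tool to supply the matching inequality; once both $g$ and $g'$ are in hand the rest is routine assembly of the machinery developed in Section \ref{section:Preliminaries on $G$-algebras and Brauer homomorphisms} and the earlier lemmas of this section.
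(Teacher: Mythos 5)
Your proposal is correct and takes essentially the same approach as the paper: establish injectivity from the embedding, compute $|\LP_{S\otimes_\O B}(Q)|=|\P(B(Q))|$ via Lemma \ref{lemma:Puig 7.10}, Lemma \ref{lemma: Puig 7.22}, and Proposition \ref{prop:bijection between points}, and then obtain the reverse inequality by running the same argument through the embedding $g'$ of Lemma \ref{lemma:Puig 7.20}. The only cosmetic difference is that the paper gets the injection $\LP_A(Q)\hookrightarrow\LP_{S\otimes_\O B}(Q)$ directly from \cite[Proposition 15.1 (a),(d)]{Thevenaz}, whereas you pass through $\P(A(Q))$ via the bijection in \ref{void:points}~(ii) and argue with $g(Q)$; these are equivalent.
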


\begin{proof}
By \cite[Proposition 15.1 (a),(d)]{Thevenaz}, the embeding $g$ induces an injective map $\LP_A(Q)\to \LP_{S\otimes_\O B}(Q)$, hence $|\LP_A(Q)|\leq |\LP_{S\otimes_\O B}(Q)|$. So it suffices to show that $|\LP_A(Q)|=|\LP_{S\otimes_\O B}(Q)|$.  We have 
$$|\LP_{S\otimes_\O B}(Q)|=|\P((S\otimes_\O B)(Q))|=|\P(S(Q)\otimes_k B(Q))|=|\P(S(Q)|\times |\P(B(Q))|$$
$$=|\P(B(Q))|=|\LP_B(Q)|,$$
where the first and the last equalities hold by \ref{void:points} (ii), the second by Lemma \ref{lemma:Puig 7.10}, the third by Proposition \ref{prop:bijection between points}, and the fourth by Lemma \ref{lemma: Puig 7.22}. In conclusion we have $|\LP_A(Q)|\leq |\LP_B(Q)|$. Since we have another embedding $g':B\to S^{\rm op}\otimes_\O A$ (see Lemma \ref{lemma:Puig 7.20}), by an analogous argument as above, we obtain
$$|\LP_B(Q)|\leq |\LP_{S^{\rm op}\otimes_\O A}(Q)|=|\LP_A(Q)|.$$
This forces $|\LP_A(Q)|=|\LP_{S\otimes_\O B}(Q)|$.
\end{proof}	

\begin{proof}[Proof of Theorem \ref{theorem: Puig 7.2}.]
The goal is to prove that $S$ has a $P$-stable $\O$-basis. By Lemma \ref{lemma: Puig 7.16}, we have $B(P)\neq 0$. Since $B$ has a $P$-stable $\O$-basis, this implies that $\O$ is isomorphic to a direct summand of $B$ as $\O P$-module, where $P$ acts by conjugation on $B$. It follows that $S$ is isomorphic to a direct summand of $S\otimes_\O B$ as an $\O P$-module. So it suffices to prove that $S\otimes_\O B$ has a $P$-stable $\O$-basis. By \cite[Theorem 24.1 (a)]{Thevenaz} (which is originally proved by Puig \cite{Puig1979}), there exists an orthogonal idempotent decomposition $1=\sum_{i\in I}i$ of the unity element of $1\in S\otimes_\O B$, satisfying the following two conditions: (i) For any $i\in I$ and $u\in P$, we have ${}^ui\in I$; (ii) For any $i\in I$, denoting by $P_i$ the stabiliser of $i$ in $P$, then $i$ is a primitive local idempotent in $(S\otimes_\O B)^{P_i}$. Consider the $P$-stable $\O$-module decomposition 
$$S\otimes_\O B=\bigoplus_{i,j\in I} i(S\otimes_\O B)j.$$ 
Now it suffices to show that $i(S\otimes_\O B)j$ has a $(P_i\cap P_j)$-stable $\O$-basis. Since $i$ (resp. $j$) is a primitive local idempotent of $P_i$ (resp. $P_j$) on $S\otimes_\O B$, it follows from Lemma \ref{lemma: Puig Lemma 7.23} that $i=ag(i')a^{-1}$ and $j=cg(j')c^{-1}$ for some idempotents $i'\in A^{P_i}$ and $j'\in A^{P_j}$ and some invertible elements $a\in (S\otimes_\O B)^{P_i}$ and $c\in (S\otimes_\O B)^{P_j}$. Consequently, we obtain $\O(P_i\cap P_j)$-module isomorphisms
$$i'Aj'\cong ai(S\otimes_\O B)jc^{-1}\cong i(S\otimes_\O B)j$$
where the first isomorphism is induced by $g$ and the second is induced by multiplication on the left by $a^{-1}$ and on the right by $c$. Since $A$ has a $P$-stable $\O$-basis, this shows that $i(S\otimes_\O B)j$ has a $(P_i\cap P_j)$-stable $\O$-basis. 
\end{proof}

\bigskip\noindent\textbf{Acknowledgements.}\quad The author is extremely grateful to Professors Zhicheng Feng, Conghui Li and Yuanyang Zhou for some very helpful discussions, and to the referees for their careful reading and invaluable suggestions.

\bigskip
{\footnotesize School of Mathematics and Statistics, Central China Normal University, Wuhan 430079, China 
	
	Email address: xinhuang@mails.ccnu.edu.cn}


\begin{thebibliography}{1}
\expandafter\ifx\csname url\endcsname\relax
  \def\url#1{\texttt{#1}}\fi
\expandafter\ifx\csname urlprefix\endcsname\relax\def\urlprefix{URL }\fi
\expandafter\ifx\csname href\endcsname\relax
  \def\href#1#2{#2} \def\path#1{#1}\fi





\bibitem{AF}
F.W. Anderson, K.R. Fuller, Rings and Categories of modules, Second Edition, Graduate Texts in Mathematics, vol. {\bf13}, Springer-Verlag, New York, 1992.

\bibitem{Bouc:The Dade group}
S. Bouc, The Dade group of a $p$-group, Invent. Math. {\bf 164} (2006) 189--231.

\bibitem{Bouc:bisets}
S. Bouc, Bisets as categories and tensor product of induced bimodules, Appl. Categor. Struct. {\bf 18} (2010) 517--521.


\bibitem{BP}
M, Brou\'{e}, L. Puig, Characters and local structure in $G$-algebras, J. Algebra {\bf 63} (1980) 306--317.







\bibitem{Curtis-Reiner}
C.W. Curtis, I. Reiner, Methods of Representation Theory, with Applications to Finite Groups and Orders, vol. I, John Wiley and Sons, New York, London, Sydney, 1981.




\bibitem{Feit}
W. Feit, The Representation Theory of Finite Groups, North-Holland Mathematical Library, vol.25, North-Holland Publishing Company, Amsterdam, 1982





\bibitem{Kessar_Linckelmann}
R. Kessar, M. Linckelmann, Descent of equivalences and character bijections, Geometric and topological aspects of the representation theory of finite groups, Springer Proc. Math. Stat., {\bf 242} (2018) 181--212.




\bibitem{KL}
R. Kessar, M. Linckelmann, The source permutation module of a block of a finite group algebra, arXiv:2507.15125.

\bibitem{L25}
C. Li, On stable equivalences of Morita type and nilpotent blocks, J. Algebra {\bf 665} (2025) 243--252.

\bibitem{Lin18a}
M. Linckelmann, The Block Theory of Finite Group Algebras I, London Math. Soc. Student Texts, vol. {\bf91}, Cambridge University Press, 2018.

\bibitem{Lin18b}
M. Linckelmann, The Block Theory of Finite Group Algebras II, London Math. Soc. Student Texts, vol. {\bf92}, Cambridge University Press, 2018.

\bibitem{Liu}
Y. Liu, Sumands of stable equivalences of Morita type, Comm. Algebra {\bf 36} (2008) 3778--3782.

\bibitem{Puig1979}
L. Puig, Sur un th\'{e}orème de Green, Math. Z. {\bf 166} (1979) 117--129.

\bibitem{Puig pointed group}
L. Puig, Pointed groups and construction of characters, Math. Z. {\bf 176} (1981) 265--292.

\bibitem{Puig1988}
L. Puig, Nilpotent blocks and their source algebras, Invent. Math. {\bf 93} (1988) 77--116.

\bibitem{Puig1999}
L. Puig, On the Local Structure of Morita and Richard Equivalences between Brauer Blocks, Progress in Math., vol. {\bf 178}, Birk\"{a}user Verlag, Basel, 1999.






\bibitem{Thevenaz}
J. Th\'{e}venaz, $G$-algebras and Modular Representation Theory, Oxford Science Publications, Clarendon, Oxford, 1995.

\bibitem{Th07}
J. Th\'{e}venaz, Endo-permutation modules, a guided tour,  in: M. Geck, D. Testerman, J. Th\'{e}venaz (Eds.), Group Representation Theory, EPFL Press, Lausanne, 2007, pp.115--147. 

\bibitem{Zimmermann}
A. Zimmermann, Representation Theory: A Homological Algebra Point of View, Algebra and Applications, Springer, 2014.

\end{thebibliography}
\end{document}